\numberwithin{equation}{section}
\newcommand{\qtq}[1]{\quad\text{#1}\quad}
\let\Re=\undefined\DeclareMathOperator*{\Re}{Re}
\let\Im=\undefined\DeclareMathOperator*{\Im}{Im}
\newcommand{\R}{\mathbb{R}}
\newcommand{\C}{\mathbb{C}}
\newcommand{\eps}{\varepsilon}
\newcommand{\Real}{\mathbb R}
\newtheorem{theorem}{Theorem}[section]
\newtheorem{lemma}[theorem]{Lemma}
\newtheorem{corollary}[theorem]{Corollary}
\newtheorem{proposition}[theorem]{Proposition}
\theoremstyle{definition}
\newtheorem{remark}[theorem]{Remark}
\theoremstyle{remark}
\begin{document}

\title[Threshold solutions]{Threshold solutions for the intercritical inhomogeneous NLS} 

\author{Luccas Campos}
\author{Jason Murphy}

\begin{abstract}
We consider the focusing inhomogeneous nonlinear Schrödinger equation in $H^1(\mathbb{R}^3)$,
\begin{equation}
i\partial_t u + \Delta u + |x|^{-b}|u|^{2}u=0,\\
\end{equation}
where $0 < b <\tfrac{1}{2}$. Previous works (see e.g. \cites{Campos21radial, MurphyINLS, FG17}) have established a blowup/scattering dichotomy below a mass-energy threshold determined by the ground state solution $Q$.  

In this work, we study solutions exactly at this mass-energy threshold.  In addition to the ground state solution, we prove the existence of solutions $Q^\pm$, which approach the standing wave in the positive time direction, but either blow up or scatter in the negative time direction. Using these particular solutions, we classify all possible behaviors for threshold solutions.  In particular, the solution either behaves as in the sub-threshold case, or it agrees with $e^{it}Q$, $Q^+$, or $Q^-$ up to the symmetries of the equation.
\end{abstract}

\maketitle


\section{Introduction}

We consider the initial-value problem for inhomogeneous nonlinear Schr\"odinger equations (NLS) of the form
\begin{equation}\label{NLS}
\begin{cases} i\partial_t u + \Delta u + |x|^{-b}|u|^2 u = 0, \\
u|_{t=0}=u_0\in H^1(\R^3),
\end{cases}
\end{equation}
where $u:\R_t\times\R_x^3\to\C$ and $b\in(0,\frac12)$.  We define $s_c=\tfrac{1+b}{2}\in(\tfrac12,\tfrac34)$, so that $\dot H^{s_c}(\R^3)$ is the critical Sobolev space of initial data for \eqref{NLS}. 

This model arises in the setting of nonlinear optics, where the factor $|x|^{-b}$ represents some inhomogeneity in the medium (see e.g. \cite{Gill, Liu}).  As pointed out by Genoud and Stuart \cite{g_8}, the factor $|x|^{-b}$ appears naturally as a limiting case of potentials that decay polynomially at infinity.

We denote the ground state for \eqref{NLS} by $Q$.  That is, $Q$ is the unique nonnegative, radial solution to 
\[
-Q+\Delta Q + |x|^{-b}Q^3 = 0,
\]
and $u(t)=e^{it}Q$ is a global, non-scattering solution to \eqref{NLS} (the \textit{ground state solution}).  Several works (see e.g. \cite{FG17, FG20, MMZ19, CFGM, MurphyINLS, Campos21radial, CC22}) have considered the behavior of solutions below the mass-energy threshold determined by $Q$, i.e. for solutions satisfying
\[
M(u)^{1-s_c}E(u)^{s_c}<M(Q)^{1-s_c}E(Q)^{s_c}.
\]

We call such solutions \emph{sub-threshold} solutions.  In particular, in this regime one has a scattering/blowup dichotomy given in terms of the size of the mass and kinetic energy, namely:
\[
\begin{cases}
\|u_0\|_{L^2}^{1-s_c}\|\nabla u_0\|_{L^2}^{s_c} < \|Q\|_{L^2}^{1-s_c} \|\nabla Q\|_{L^2}^{s_c} \implies \text{scattering}, \\
\|u_0\|_{L^2}^{1-s_c}\|\nabla u_0\|_{L^2}^{s_c} > \|Q\|_{L^2}^{1-s_c} \|\nabla Q\|_{L^2}^{s_c} \implies \text{blowup}.
\end{cases}
\]
Here \emph{scattering} (as $t\to\pm\infty$) refers to the fact that there exist $u_\pm\in H^1$ such that
\[
\lim_{t\to\pm\infty}\|u(t)-e^{it\Delta}u_\pm\|_{H^1}=0. 
\]

Some recent work has also considered the long-time behavior of solutions beyond the ground state threshold.  In particular, in \cite{CCnodea21}, the first author and Cardoso established a dichotomy for fast-decaying initial data satisfying
\begin{equation}
    M(u_0)^{1-s_c}E(u_0)^{s_c}\left(1-\frac{(V'(0))^2}{32 E(u_0) V(0)} \right) \leq 1,
\end{equation}
where $V(t) = \int |x|^2 |u(t)|^2 \, dx$. 
This result classifies the long-time behavior for a (non-empty) set of initial data with arbitrarily large mass and energy. Unlike the sub-threshold case, the results obtained are generally not symmetric in time, as the classification depends on the sign of $V'(0)$, which is changed after applying the time-reversal symmetry.

In this paper, we study the behavior of solutions to \eqref{NLS} with data satisfying
\[
M(u_0)^{1-s_c}E(u_0)^{s_c} = M(Q)^{1-s_c}E(Q)^{s_c},
\]
which we call \emph{threshold solutions}. Using the scaling symmetry, it is equivalent to study initial data satisfying
\[
M(u_0)=M(Q) \qtq{and} E(u_0)=E(Q). 
\]

For such solutions, we further consider whether
\[
\|\nabla u_0\|_{L^2} < \|\nabla Q\|_{L^2} \qtq{or} \|\nabla u_0\|_{L^2} > \|\nabla Q\|_{L^2}.
\]

The variational characterization of $Q$ then implies 
\begin{equation}
\|\nabla u(t)\|_{L^2} < \|\nabla Q\|_{L^2} \qtq{or} \|\nabla u(t)\|_{L^2} > \|\nabla Q\|_{L^2},
\end{equation}
respectively, for all $t$ in the lifespan of $u$. We call the corresponding solutions \emph{constrained} or \emph{unconstrained}, respectively.  We also note that if $
\|\nabla u_0\|_{L^2}=\|\nabla Q\|_{L^2}$, then $u\equiv e^{it}Q$ modulo the symmetries of the equation. 

The classification of threshold behaviors has been a topic of recent mathematical interest.  In the setting of pure-power NLS, the first such result was due to Duyckaerts and Merle \cites{DM_Dyn} for the energy-critical problem (see also \cite{higher_thre} for the higher-dimensional case).  Similar work has also appeared in the setting of the energy-critical wave equation (see e.g. \cite{DM_Dyn_wave}). In the intercritical setting, Duyckaerts and Roudenko \cite{DR_Thre} addressed the (homogeneous) cubic NLS in three dimensions; this was later generalized to the full intercritical range in any dimension by the first author, together with Farah and Roudenko \cite{campos2022threshold}.  Apart from the pure power-type NLS, we would also like to point out the work of Yang, Zeng and Zhang \cite{YZZ_thre}, who considered the energy-critical NLS in the presence of an inverse-square potential.  Finally, we would like to mention some related works on the phenomenon of threshold scattering (see e.g. \cite{DLR22, MMZ22, KMV21}).

Our main results revolve around the existence and uniqueness of certain orbits. First, we prove the existence of two particular solutions to \eqref{NLS}.

\begin{theorem}\label{T:Existence_special_solutions} There exist radial solutions $Q^\pm$ to \eqref{NLS} with
\[
M(Q^\pm)=M(Q)\qtq{and} E(Q^\pm)=E(Q),
\]
defined on intervals $I^\pm\supset[0,\infty)$, which satisfy
\begin{equation}
\|Q^\pm(t)-e^{it}Q\|_{H^1} \lesssim e^{-ct} 
\end{equation}
for some $c>0$ and all $t>0$.

The solution $Q^-$ is global ($I^-=\R$), satisfies
\[
\|\nabla Q^-(0)\|_{L^2}<\|\nabla Q\|_{L^2},
\]
and scatters in $H^1(\mathbb R^3)$ as $t\to-\infty$.

The solution $Q^+$ satisfies
\[
\|\nabla Q^+(0)\|_{L^2}>\|\nabla Q\|_{L^2},
\]
and blows up in finite negative time ($I^+=(T_-,\infty)$ for some $T_-<0$). Moreover, $xQ^+ \in L^2(\mathbb{R}^3)$.

\end{theorem}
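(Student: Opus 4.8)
\emph{Strategy.} I would reconstruct the stable–manifold solutions exactly as in the pure-power case (Duyckaerts–Merle and Duyckaerts–Roudenko \cite{DR_Thre}, and Campos–Farah–Roudenko \cite{campos2022threshold}), working throughout in the radial sector $H^1_{\mathrm{rad}}(\R^3)$, where the only symmetry acting near the standing wave is the phase. Write a solution near $e^{it}Q$ as $u(t)=e^{it}(Q+\vec\rho(t))$ with $\vec\rho=(\Re\rho,\Im\rho)$, so that $\partial_t\vec\rho=\mathcal L\vec\rho+N(\vec\rho)$, where
\[
\mathcal L=\begin{pmatrix}0 & L_-\\ -L_+ & 0\end{pmatrix},\quad L_+=-\Delta+1-3|x|^{-b}Q^2,\quad L_-=-\Delta+1-|x|^{-b}Q^2,
\]
and $N(\vec\rho)$ is the nonlinear remainder, with $N(\vec\rho)=O(|\vec\rho|^2)$. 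The first task is the spectral description of $\mathcal L$: $L_-\ge 0$ with $L_-Q=0$; $L_+$ has Morse index one and trivial radial kernel (nondegeneracy of the INLS ground state); the radial generalized null space of $\mathcal L$ is two-dimensional, spanned by $iQ$ and one generalized eigenvector, the latter reflecting the failure of the Vakhitov–Kolokolov condition (orbital instability) in the range $s_c\in(\tfrac12,\tfrac34)$; the essential spectrum is $i\R\setminus(-i,i)$; and there is exactly one pair of real eigenvalues $\pm e_0$ ($e_0>0$) with eigenfunctions $\mathcal Y^{\pm}$, $\mathcal L\mathcal Y^{\pm}=\pm e_0\mathcal Y^{\pm}$, decaying exponentially. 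One also needs the coercivity of the linearized energy on the subspace orthogonal, in the natural symplectic pairing, to the generalized null space and to $\mathcal Y^\pm$. For the homogeneous equation this is in \cite{DR_Thre}; here it should follow from the known properties of $Q$ together with the inhomogeneous Strichartz and nonlinear estimates of \cite{Campos21radial, MurphyINLS}. I expect this spectral/coercivity analysis — carried out for a ground state $Q$ that is only $C^{1,\alpha}$ at the origin, so that $|x|^{-b}Q^2$ is a genuinely singular potential — to be the main obstacle; the remainder of the scheme is a careful adaptation of the pure-power arguments to the inhomogeneous nonlinear-estimate technology already available.

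\emph{Modulation and construction of $Q^\pm$.} For $u$ staying $\eps$-close to $\{e^{i\theta}Q\}$ on a time interval I would fix the phase $\theta(t)$ by an orthogonality condition, write $\vec\rho(t)=a_+(t)\mathcal Y^{+}+a_-(t)\mathcal Y^{-}+\gamma(t)$ with $\gamma$ in the coercive subspace, and derive
\[
|\dot\theta(t)|\lesssim\|\vec\rho(t)\|,\qquad |\dot a_\pm(t)\mp e_0 a_\pm(t)|\lesssim\|\vec\rho(t)\|^{2};
\]
combining the coercivity with the conservation laws $M(u)=M(Q)$, $E(u)=E(Q)$ then yields the a priori bound $\|\vec\rho(t)\|_{H^1}\lesssim|a_+(t)|+|a_-(t)|$. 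The same conservation laws give, to leading order, $\|\nabla u\|_{L^2}^2-\|\nabla Q\|_{L^2}^2=2\langle\nabla Q,\nabla\Re\rho\rangle+O(\|\vec\rho\|^2)$, and since $\langle\nabla Q,\nabla\Re\mathcal Y^{-}\rangle=-\langle\Delta Q,\Re\mathcal Y^{-}\rangle\neq 0$, the sign of the $\mathcal Y^{-}$-coefficient of a solution near the standing wave decides whether it is constrained or unconstrained. With these in hand, for a parameter $T_n\to+\infty$ let $u_n$ solve \eqref{NLS} backward from time $T_n$ with data $e^{iT_n}\bigl(Q\pm e^{-e_0 T_n}\mathcal Y^{-}\bigr)$, i.e.\ with vanishing forward-unstable ($\mathcal Y^{+}$) component. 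A bootstrap from $t=T_n$ downward using the estimates above — the $a_-$ equation propagates $|a_-(t)|\lesssim e^{-e_0 t}$ while $a_+$ remains $O(e^{-2e_0 t})$ — shows that each $u_n$ exists on $[0,T_n]$ with $\|u_n(t)-e^{it}Q\|_{H^1}\lesssim e^{-e_0 t}$ uniformly in $n$. Passing to a limit (weak-$*$ convergence together with the stability theory for \eqref{NLS}) produces a solution on $[0,\infty)$ with $\|Q^{\pm}(t)-e^{it}Q\|_{H^1}\lesssim e^{-e_0 t}$ whose $\mathcal Y^{-}$-coefficient equals $\pm e^{-e_0 t}(1+o(1))$; the previous step identifies the ``$+$'' solution as unconstrained ($\|\nabla Q^{+}(0)\|_{L^2}>\|\nabla Q\|_{L^2}$) and the ``$-$'' one as constrained. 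Extending by the local theory gives maximal intervals $I^{\pm}\supset[0,\infty)$.

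\emph{Behavior as $t\to-\infty$.} Since $Q$ decays exponentially (maximum principle on its elliptic equation) and $\mathcal Y^{-}$ decays exponentially (eigenfunction at a point off the essential spectrum), the data of $u_n$ have finite variance; because each $u_n$ stays exponentially close to the standing wave, whose variance is constant in time, one propagates a uniform-in-$n$ bound on $\|x\,u_n(t)\|_{L^2}$ over $[0,T_n]$, and by lower semicontinuity the limit satisfies $xQ^{+}\in L^2$, so $V(t):=\int|x|^2|Q^{+}(t)|^2\,dx$ is finite on $I^{+}$. The virial identity reads $V''(t)=(24+8b)E(Q)-(4+4b)\|\nabla Q^{+}(t)\|_{L^2}^2$, and the Pohozaev identities for $Q$ give $(24+8b)E(Q)=(4+4b)\|\nabla Q\|_{L^2}^2$; hence, $Q^{+}$ being unconstrained for all time, $V''<0$ on $I^{+}$. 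As $V\ge 0$ and $V$ is concave on $[0,\infty)$, necessarily $V'(0)>0$ (otherwise $V'\equiv 0$ there, contradicting $V''<0$), so $V'(t)>V'(0)>0$ and $V(t)\le V(0)-|t|V'(0)$ for $t<0$; since $V$ cannot be negative, $I^{+}=(T_-,\infty)$ with $T_->-\infty$, i.e.\ $Q^{+}$ blows up in finite negative time. For $Q^{-}$, constrainedness gives $\|Q^{-}(t)\|_{H^1}^2\le M(Q)+\|\nabla Q\|_{L^2}^2$ for all $t$, hence $I^{-}=\R$; moreover, as $t\to-\infty$ the nontrivial $\mathcal Y^{-}$-coefficient now plays the role of an unstable mode, grows like $e^{e_0|t|}$, and drives $Q^{-}$ out of every small neighborhood of the orbit $\{e^{i\theta}Q\}$, so the concentration–compactness/rigidity analysis at the mass-energy threshold (adapting \cite{Campos21radial, MurphyINLS}) forces $Q^{-}$ to scatter in $H^1(\R^3)$ as $t\to-\infty$.
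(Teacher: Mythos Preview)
Your scheme is sound, but the construction route differs from the paper's. Rather than solving backward from data $e^{iT_n}(Q\pm e^{-e_0T_n}\mathcal Y^{-})$ and passing to a limit, the paper builds, for each $k\ge 1$, an explicit approximate profile $\mathcal V_k^A=\sum_{j=1}^k e^{-je_0t}Z_j^A$ (with $Z_1^A=A\mathcal Y_+$ and higher $Z_j$ obtained by inverting $\mathcal L-je_0$ on the previous error), and then runs a Strichartz-based contraction on $[t_k,\infty)$ to obtain a true solution $U^A$ with $\|U^A-e^{it}(Q+\mathcal V_k^A)\|_{Z([t,\infty))}\le e^{-(k+\frac12)e_0t}$. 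This buys arbitrarily high-order asymptotics, which are exactly what the uniqueness argument in Section~\ref{Sec:bootstrap} feeds on; your compactness construction gives the $e^{-e_0t}$ rate but not these refinements, so you would need a separate bootstrap to match $U^A$ later. For the backward-time behavior the paper also argues differently: for $Q^-$, non-scattering backward gives compactness on all of $\R$ (Proposition~\ref{P:compactness}), and then the two-sided virial estimate (Lemma~\ref{L:virial}) forces $\int_\R\delta(t)\,dt=0$, contradicting $\|\nabla Q^-\|<\|\nabla Q\|$; this is cleaner than your instability-mode heuristic, which as stated presumes the modulation decomposition persists for all $t<0$. For $Q^+$, the paper obtains $xQ^+\in L^2$ from monotonicity of the truncated variance together with forward convergence to $Q$ (Proposition~\ref{P:global-unconstrained_finite_variance}), rather than propagating variance through the approximants (a step you gloss over: $H^1$-closeness to $e^{it}Q$ does not by itself control $\|xu\|_{L^2}$ without invoking the sign of $V''$). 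Your concavity argument for finite-negative-time blowup is essentially equivalent to the paper's, which phrases it as the incompatibility of $P_\infty[Q^+(0)]>0$ and $P_\infty[\overline{Q^+}(0)]>0$.
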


Using the solutions obtained in Theorem~\ref{T:Existence_special_solutions}, we can classify all threshold solutions to \eqref{NLS}.

\begin{theorem}[Classification of threshold dynamics]\label{T:Classification_threshold_solutions} If $u_0\in H^1$ satisfies
\[
M(u_0)^{1-s_c}E(u_0)^{s_c}=M(Q)^{1-s_c}E(Q)^{s_c},
\]
then we have the following:
\begin{itemize}
\item[i)] If 
\[
\|u_0\|_{L^2}^{1-s_c}\|\nabla u_0\|_{L^2}^{s_c} < \|Q\|_{L^2}^{1-s_c}\|\nabla Q\|_{L^2}^{s_c},
\]
then $u$ either scatters as $t\to\pm\infty$ or $u = Q^-$ up to symmetries.
\item[ii)] If 
\[
\|u_0\|_{L^2}^{1-s_c}\|\nabla u_0\|_{L^2}^{s_c} = \|Q\|_{L^2}^{1-s_c}\|\nabla Q\|_{L^2}^{s_c},
\]
then $u=e^{it}Q$ up to symmetries.
\item[iii)] If
\[
\|u_0\|_{L^2}^{1-s_c}\|\nabla u_0\|_{L^2}^{s_c} > \|Q\|_{L^2}^{1-s_c}\|\nabla Q\|_{L^2}^{s_c}
\]
and $u_0$ is radial or $x u_0 \in L^2(\mathbb R^3)$, then $u$ either blows up in finite positive and negative times or $u = Q^+$ up to symmetries.
\end{itemize}
\end{theorem}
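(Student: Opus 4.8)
We follow the by-now-standard strategy for threshold problems (cf.\ \cite{DM_Dyn,DR_Thre,campos2022threshold}), adapted to the inhomogeneous equation. By the scaling symmetry of \eqref{NLS} we may assume $M(u_0)=M(Q)$ and $E(u_0)=E(Q)$. The sharp weighted Gagliardo--Nirenberg inequality characterising $Q$ gives, for any threshold solution, the quantitative dichotomy
\[
d(t):=\inf_{\theta\in\R}\|u(t)-e^{i\theta}Q\|_{H^1}\longrightarrow 0 \quad\Longleftrightarrow\quad \|\nabla u(t)\|_{L^2}\longrightarrow\|\nabla Q\|_{L^2};
\]
in particular, if $\|\nabla u(t_0)\|_{L^2}=\|\nabla Q\|_{L^2}$ for some $t_0$ then $u(t_0)=e^{i\theta_0}Q$, whence $u=e^{it}Q$ up to the phase and time-translation symmetries, which is item (ii). In the constrained case, conservation of mass and energy together with $\|\nabla u(t)\|_{L^2}<\|\nabla Q\|_{L^2}$ keep $u$ bounded in $H^1$, so $u$ is global, while $E(u)=E(Q)>0$ keeps $\|\nabla u(t)\|_{L^2}$ bounded below; in cases (i) and (iii) the variational inequality preserves the strict kinetic inequality throughout the lifespan.

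\emph{Step 1: non-dispersion forces convergence to the ground state.} The goal is: if a threshold solution $u$ does not scatter as $t\to+\infty$ (constrained case) or does not blow up in finite positive time (unconstrained case, with $u_0$ radial or $xu_0\in L^2$), then $d(t)\to0$ as $t\to+\infty$, and symmetrically as $t\to-\infty$. In the constrained case this is the threshold analogue of the Kenig--Merle rigidity underlying the sub-threshold scattering theory \cite{MurphyINLS,Campos21radial,FG17}: a profile decomposition, using that strictly sub-threshold data scatter, shows that the failure of scattering forces the forward orbit $\{u(t):t\ge 0\}$ to be precompact in $H^1$ modulo phase and scaling; the scaling parameters are then bounded (the kinetic energy being bounded above and below), and a variational/virial argument identifies every limit point with a phase rotation of $Q$, giving $d(t)\to 0$. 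In the unconstrained case one argues directly via the virial identity $V''(t)=-(4+4b)\big(\|\nabla u(t)\|_{L^2}^2-\|\nabla Q\|_{L^2}^2\big)$ for $V(t)=\int|x|^2|u|^2\,dx$, or its radial localisation with an error that is negligible for large truncation radius: if $u$ were global forward with $\|\nabla u(t)\|_{L^2}\not\to\|\nabla Q\|_{L^2}$, then $V''$ would be bounded away from $0$ on a set of times of infinite measure, forcing $V(t)\to-\infty$, which is impossible; hence $\|\nabla u(t)\|_{L^2}\to\|\nabla Q\|_{L^2}$, and $d(t)\to 0$.

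\emph{Step 2: modulation and the unstable mode.} Near the manifold $\{e^{i\theta}Q\}$ write $u(t)=e^{i\theta(t)}(Q+\epsilon(t))$ with $\Im\ip{\epsilon(t)}{Q}=0$, removing the phase direction $iQ$ from the kernel of the linearised operator; then $\theta$ is $C^1$ with $|\dot\theta(t)-1|\lesssim\|\epsilon(t)\|_{H^1}$, and $\|\epsilon(t)\|_{H^1}\approx d(t)$ while $d$ is small. Writing $\mathcal{L}$ for $S''(Q)$ with $S=E+\tfrac12 M$ (so $S'(Q)=0$), the real and imaginary parts of $\epsilon$ are governed by $\mathcal{L}_+=-\Delta+1-3|x|^{-b}Q^2$ and $\mathcal{L}_-=-\Delta+1-|x|^{-b}Q^2$; here $\mathcal{L}_-\ge 0$ with kernel $\{Q\}$, and $\mathcal{L}_+$ has exactly one negative eigenvalue, so the linearised flow around $e^{it}Q$ has a one-dimensional exponentially unstable mode and a matching stable mode---eigenvalues $\pm e_0$, eigenfunctions $\mathcal{Y}_\pm$---and is non-growing on the complementary subspace. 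Conservation of $S$ at the threshold yields $\ip{\mathcal{L}\epsilon(t)}{\epsilon(t)}=O(\|\epsilon(t)\|_{H^1}^3)$; combined with the coercivity of $\mathcal{L}$ on the complement of the zero modes $\{iQ,Q\}$ modulo the eigendirections $\mathcal{Y}_\pm$, this gives, with $a_\pm(t)$ the projections of $\epsilon(t)$ onto $\mathcal{Y}_\pm$,
\[
d(t)\approx\|\epsilon(t)\|_{H^1}\approx|a_+(t)|+|a_-(t)| \qtq{and} \big|\tfrac{d}{dt}a_\pm(t)\mp e_0\,a_\pm(t)\big|\lesssim d(t)^2 .
\]
A standard saddle-point (``ejection'') analysis of this system shows that a threshold solution with $d(t)\to 0$ as $t\to+\infty$ in fact satisfies $a_+(t)=O(d(t)^2)$ and $d(t)\lesssim e^{-ct}$ for some $c>0$, with $\theta(t)-t$ convergent; moreover either $\epsilon\equiv 0$, in which case $u=e^{it}Q$ up to symmetries, or $a_-(t)$ has a definite sign and $d(t)\approx e^{-e_0 t}$.

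\emph{Step 3: uniqueness and conclusion.} Let $u$ be a threshold solution with $\epsilon\not\equiv 0$ and $d(t)\lesssim e^{-ct}$ as $t\to+\infty$. After a phase rotation and a rescaling we may assume $\theta(t)\to 0$ and that the sign of $a_-$ agrees with that of the analogous coefficient for $Q^+$ (if $u$ is unconstrained) or $Q^-$ (if $u$ is constrained), the solutions produced by Theorem~\ref{T:Existence_special_solutions}, which also satisfy $\|Q^\pm(t)-e^{it}Q\|_{H^1}\lesssim e^{-ct}$. The difference $w:=u-Q^\pm$ then solves a perturbed equation whose source is $O(e^{-2ct})$ and is controlled by $\|w\|$, and $w$ decays faster than the unstable linear rate; re-running, on $[T,\infty)$ for $T$ large, the Duhamel contraction used to construct $Q^\pm$---uniqueness in the class of solutions converging to $e^{it}Q$ with prescribed unstable coefficient---forces $w\equiv 0$, i.e.\ $u=Q^\pm$ up to symmetries. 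Assembling the pieces: in case (i), $u$ is global and either scatters in both time directions or, failing to scatter in one direction, converges to the manifold there by Step 1 and hence equals $e^{it}Q$, $Q^+$, or $Q^-$ up to symmetries by Steps 2--3; since $E(u)=E(Q)$ with strict kinetic inequality it is not $e^{it}Q$, and since the symmetries preserve the constrained class it is not $Q^+$, leaving $u=Q^-$. In case (iii), if $u$ does not blow up in finite positive time then $d(t)\to 0$ as $t\to+\infty$ by Step 1, hence $u=Q^+$ up to symmetries (not $e^{it}Q$ by strictness, not $Q^-$ by the constrained/unconstrained dichotomy); applying this also as $t\to-\infty$ gives that $u$ either blows up in finite positive and negative times or $u=Q^+$ up to symmetries. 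The main obstacle is Step 3 together with the coercive and modulational estimates feeding it---pinning down the single unstable direction and showing that exponential convergence to $\{e^{it}Q\}$ then determines the solution via the very contraction that produced $Q^\pm$---and, specifically for the inhomogeneous model, the constrained case of Step 1: the weight $|x|^{-b}$ breaks translation and Galilean invariance, so the sub-threshold concentration--compactness and virial rigidity must be revisited and the rigidity conclusion sharpened from ``$u_c=0$'' to ``$u_c$ is a phase rotation of $Q$''.
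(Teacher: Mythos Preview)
Your proposal is correct and follows essentially the same strategy as the paper. The only noteworthy differences are organizational: the paper obtains $H^1$-precompactness of non-scattering constrained orbits directly without any phase or scaling parameters (exploiting that profiles with diverging spatial translation must scatter, cf.\ Proposition~\ref{P:compactness}), and it derives the exponential rate $d(t)\lesssim e^{-ct}$ from a modulated virial estimate plus Gronwall (Sections~\ref{Sec:constrained}--\ref{Sec:unconstrained}) rather than from the $a_\pm$ saddle-point ODE analysis you sketch in Step~2; the spectral ODE machinery enters only at the uniqueness stage (Proposition~\ref{prop_boot1}).
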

\begin{remark} The assertion that $u=v$ \emph{up to symmetries of \eqref{NLS}} means that there exist $\lambda_0 >0$, $\theta_0 \in \mathbb{R}/{2\pi\mathbb{Z}}$, and $t_0 \in \mathbb{R}$ such that either
\begin{equation}
u(t,x) = e^{i\theta_0}\lambda_0^{\frac{2-b}{2}}v(\lambda_0^2t-t_0,\lambda_0 x)
\qtq{or}u(t,x) = e^{i\theta_0}\lambda_0^{\frac{2-b}{2}}\overline{v}(\lambda_0^2t-t_0,\lambda_0x).
\end{equation}
That is, $u$ and $v$ agree up to scaling, phase, time-translation and time-reversal.  Note that all these symmetries leave the $\dot H_x^{s_c}$-norm invariant. 
\end{remark}

 \begin{remark} All cases in Theorem \ref{T:Classification_threshold_solutions} do occur. Indeed, by Theorem~\ref{T:Existence_special_solutions}, one only needs to check that blowup in finite positive and negative times and scattering in both time directions are possible. In fact, this follows from the dichotomy proved in \cite{CCnodea21} (which gives results that are symmetric in time for real initial data, since then $V'(0) = 0$).  %

\end{remark}

The argument for Theorems~\ref{T:Existence_special_solutions}~and~\ref{T:Classification_threshold_solutions} proceeds as follows:  

The first main step is to prove that in certain scenarios, forward-global threshold solutions necessarily converge to the ground state solution (with an exponential rate).  We show this first in the setting of a constrained solution that fails to scatter (see Section~\ref{Sec:constrained}).  The idea is first to establish some compactness properties for such solution (which is achieved via concentration compactness and the sub-threshold dichotomy results), and then use a combination of virial estimates and so-called modulation analysis to establish the desired convergence property.  We next prove convergence in the setting of an unconstrained solution, relying once again on virial estimates and modulation analysis (see Section~\ref{Sec:unconstrained}).  Modulation analysis, which refers to obtaining a suitable decomposition of the solution during times when it approaches the orbit of $Q$, is prerequisite to both of these arguments; accordingly, we carry out this analysis earlier in the paper, in Section~\ref{Sec:modulation}.  This analysis relies in turn on a spectral analysis of the operator $\mathcal{L}$ arising in the linearization of \eqref{NLS} around the ground state solution, which we carry out in Section~\ref{Sec:spectral}. 

The second main step (carried out in Section~\ref{Sec:special}) is to establish the existence of solutions behaving in the manner described above.  That is, we prove the existence of forward-global solutions converging exponentially to the ground state.  For this part of the argument, we first use explicit functions related to the spectrum of $\mathcal{L}$ (obtained in Section~\ref{Sec:spectral}) to build good approximate solutions, and then utilize a fixed point argument to obtain true solutions.  The solutions we build are essentially the particular solutions $Q^\pm$ appearing in Theorem~\ref{T:Existence_special_solutions}.

Finally, the third step (carried out in Section~\ref{Sec:bootstrap}) is to establish a uniqueness property for solutions converging exponentially to the ground state.  Combining the first and second steps above, we can then obtain the rather rigid statements appearing in Theorem~\ref{T:Classification_threshold_solutions}, namely, that nonscattering constrained solutions must coincide with $Q^-$, while forward-global unconstrained solutions must coincide with $Q^+$.

In Section~\ref{Sec:closure}, we put together all of the pieces and quickly complete the proof of the main theorems. 

The inhomogeneity $|x|^{-b}$ brings some new challenges compared compared to the homogeneous case; in particular, it introduces a singularity at the origin, which gets stronger after being differentiated. In a few instances throughout the paper, we have to work with a restricted range of $b$ precisely because of this issue. In particular, the modulation analysis of Section~\ref{Sec:modulation} leads us to the restriction $b\in(0,\tfrac12)$.  In addition, the inhomogeneity breaks the translation symmetry (thus breaking conservation of momentum and Galilean invariance).  As translation parameters appear in the profile decomposition adapted to the linear evolution $e^{it\Delta}$, we have to be careful when passing to the nonlinear profile decomposition in the constrained case; in particular, it is essential to show that these translation parameters may always be chosen to be identically zero.  This is ultimately possible due to the fact that in the regime $|x|\to\infty$, the equation \eqref{NLS} is well-approximated by the underlying \emph{linear} equation, which guarantees that profiles with diverging translation parameters always correspond to scattering solutions (and hence do not appear when we consider a non-scattering threshold solution).  Finally, the singularity must be treated carefully as as we work to establish decay and regularity of the ground state and other functions related to the spectrum of the linearized operator; these properties play an important role in the construction of the special solutions in Section~\ref{Sec:special}.

\textbf{Acknowledgments}. L. C. was financed by grant \#2020/10185-1, São Paulo Research Foundation (FAPESP). J. M. was supported by a Simons Collaboration Grant. 

\section{Preliminaries}\label{Sec:prelim}

We write $A\lesssim B$ to denote $A\leq CB$ for some $C>0$.  If $A\lesssim B$ and $B\lesssim A$ then we write $A\sim B$.  We also make use of the standard `big-oh' notation, $\mathcal{O}$.  We write $(\cdot,\cdot)$ for the standard $L^2$ inner product.

\subsection{The ground state}  The ground state $Q$ is the unique nonnegative, radial, $H^1$-solution to
\begin{equation}\label{Q-PDE}
-Q+\Delta Q + |x|^{-b}Q^3 = 0.
\end{equation}

It may be constructed as an optimizer to the following sharp Gagliardo--Nirenberg inequality:
\begin{equation}\label{sharpGN}
\| |x|^{-b} f^4 \|_{L^1} \leq C_{GN} \|f\|_{L^2}^{1-b}\|\nabla f\|_{L^2}^{3+b}.
\end{equation}

The \textit{Pohozaev} identities for $Q$ are obtained by multiplying the \eqref{Q-PDE} by $Q$ or $x\cdot\nabla Q$ and integrating by parts (see \cite{farah} for more details).  They read as follows:
\begin{align}
-\|Q\|_{L^2}^2 - \|\nabla Q\|_{L^2}^2 + \| |x|^{-b} Q^4\|_{L^1} & = 0, \\
 \tfrac32\|Q\|_{L^2}^2 + \tfrac12\|\nabla Q\|_{L^2}^2 - \tfrac{(3-b)}{4}\| |x|^{-b} Q^4\|_{L^1} &= 0.
\end{align}
Combining these identities, we may derive that 
\begin{equation}\label{energy-of-Q}
\|\nabla Q\|_{L^2}^2 = \tfrac{3+b}{4}\||x|^{-b}Q^4\|_{L^1}, \qtq{so that} E(Q) = [\tfrac12-\tfrac1{3+b}]\|\nabla Q\|_{L^2}^2.
\end{equation}

\subsection{Well-posedness and stability}\label{S:WPS}

We define the scattering norm 
\[
\|u\|_{S(I)}=\|u\|_{L_t^4 L_x^{\frac{6}{1-b}}(I\times\R^3)}, 
\]
where $L_t^q L_x^r$ denotes the standard mixed Lebesgue norm. We further define the Strichartz norm
\begin{equation}
    \|u\|_{Z(I)} = \sup_{(q,r)\in A}\|u\|_{L^q_t W^{1,r}_x(I\times \mathbb R^3)},
\end{equation}
where $A$ is the set of \textit{admissible pairs}
\begin{equation}
A = \bigl\{(q,r) \colon \tfrac{2}{q}+\tfrac{3}{r} = \tfrac{3}{2}, \,\, q,r \geq 2\bigr\}.
\end{equation}

By Sobolev embedding, we have
\begin{equation}
\|u\|_{S(I)}\lesssim \||\nabla|^{s_c} u\|_{L^{4}_tL^{3}_x(I\times \mathbb R^3)} \lesssim \|u\|_{Z(I)}.
\end{equation}

We also define the dual norm

\begin{equation}
    \|u\|_{N(I)} = \|u\|_{L^2_tW^{1,6/5}_x(I\times \mathbb R^3)}.
\end{equation}

The relationship between the norms $Z(I)$ and $N(I)$ is given by the so-called \textit{Strichartz estimates}:

\begin{lemma}[Strichartz estimates] If $e^{it\Delta}$ is the evolution associated to the linear equation
\begin{equation}
    i\partial_t u + \Delta u = 0,
\end{equation}
then
\begin{align}
    \left\|e^{it\Delta}f\right\|_{Z(\mathbb R)}&\lesssim\left\|f\right\|_{H^1_x},\\
    \left\|\int_{I} e^{i(t-s)\Delta}F(s)\, ds\right\|_{Z(I)}&\lesssim\left\|F\right\|_{N(I)}.
\end{align}
\end{lemma}

The presence of the inhomogeneous factor $|x|^{-b}$ in \eqref{NLS} suggests we employ either Sobolev or Hardy-type inequalities while estimating the nonlinear term. Those can be combined in a unified way, yielding
\begin{equation}\label{stein_lemma}
\left\| |x|^{-\beta}u\right\|_{L^{q}_x} \lesssim
\left\||\nabla|^su\right\|_{L^p_x}.
\end{equation}
provided  $1 < p \leq q <\infty$, $0 < s < 3$ and $\beta \geq 0$ satisfy
$$
\beta < \tfrac{3}{q}, \quad \quad s= \tfrac{3}{p}-\tfrac{3}{q}+\beta
$$
(see \cite[Theorem B$^*$]{stein}).

The local well-posedness for \eqref{NLS} was first studied by Genoud and Stuart in \cite{g_8} (see also Genoud \cite{g_6}) by an approach using energy estimates as in Cazenave \cite{cazenave} (i.e., without relying on Strichartz inequalities).  They established a well-posedness result in $C^{0}_t H^1_x(I\times\mathbb R^3) \cap C^1_t H^{-1}_x(I\times\mathbb R^3)$ for the range $0 < b < 2$.  More recently, Guzmán \cite{Guzman_Well17}, Dinh \cite{Dinh_Well17} and the first author \cite{Campos21radial} proved that if $0 < b < 3/2$, the solutions also belong locally (in time) to $Z(I)$.

We also have the following stability result for \eqref{NLS}:
\begin{proposition}[Stability, c.f. \cite{FG17}]
Let $v$ be a solution to 

\begin{equation}
    i \partial_t v+\Delta v  + |x|^{-b}|v|^2v = e
\end{equation}
which satisfies
\begin{equation}
    \|v\|_{L^\infty_tH^1_x(I\times \Real^3)} + \|v\|_{S(I)} \leq M < +\infty.
\end{equation}

Then there exists $\epsilon_1 = \epsilon_1(M)>0$ such that if $u_0 \in H^1$ satisfies $\|u_0-v(0)\|_{H^1}<\epsilon$ and
\begin{equation}
    { \|e\|_{L^{\frac{8}{3(1-b)}}_t L^{\frac{12}{9+b}}_x(I\times \Real^3)}+}
    \|e\|_{N(I)} < \epsilon
\end{equation}

for some $0<\epsilon<\epsilon_1$, then there exists a unique solution $u$ to \eqref{NLS} on $I$ with $u(0) = u_0$ and
\begin{equation}
    \|u-v\|_{S(I)} \lesssim_M \epsilon.
\end{equation}

\end{proposition}

For constrained solutions, we have the following:
\begin{proposition}[Global well-posedness, \cite{farah}]\label{P:GWP} Suppose
\[
M(u_0)^{1-s_c}E(u_0)^{s_c} \leq M(Q)^{1-s_c}E(Q)^{s_c}
\]
and
\[
\|u_0\|_{L^2}^{1-s_c}\|\nabla u_0\|_{L^2}^{s_c}\leq\|Q\|_{L^2}^{1-s_c}\|\nabla Q\|_{L^2}^{s_c}.
\]
Then the corresponding solution to \eqref{NLS} is global in time and remains uniformly bounded in $H^1_x$. 
\end{proposition}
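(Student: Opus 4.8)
Proposition~\ref{P:GWP} (Global well-posedness for constrained solutions at or below the mass-energy threshold).

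The plan is to combine a coercivity estimate from the sharp Gagliardo--Nirenberg inequality \eqref{sharpGN} with the conservation laws to get a uniform bound on $\|\nabla u(t)\|_{L^2}$, which together with the local well-posedness theory precludes finite-time blowup. First I would fix the scaling so that it suffices to treat $M(u_0)=M(Q)$ and $E(u_0)\le E(Q)$ (equivalently, one can work with the scaling-invariant quantities directly). Define $f(y) = \tfrac12 y - \tfrac{1}{3+b}\,C_{GN}\, M(u_0)^{\frac{1-b}{2}}\, y^{\frac{3+b}{4}}$ as a function of $y=\|\nabla u\|_{L^2}^2$; using \eqref{sharpGN} one has $E(u(t)) \ge f(\|\nabla u(t)\|_{L^2}^2)$. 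The key computation, which I would do once, is that $f$ attains its maximum exactly at $y_\ast = \|\nabla Q\|_{L^2}^2$ and that $f(y_\ast) = E(Q)$ — this is forced by the Pohozaev identities \eqref{energy-of-Q} and the fact that $Q$ is an optimizer of \eqref{sharpGN}, so that the GN constant is $C_{GN} = \|\,|x|^{-b}Q^4\|_{L^1}\big/\big(\|Q\|_{L^2}^{1-b}\|\nabla Q\|_{L^2}^{3+b}\big)$.

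With this in hand, the argument is a continuity/connectedness argument. At $t=0$ the hypothesis $\|\nabla u_0\|_{L^2}^2 \le y_\ast$ places us on the left branch $\{y \le y_\ast\}$ where $f$ is increasing; since $E(u(t)) = E(u_0) \le E(Q) = f(y_\ast)$, the inequality $f(\|\nabla u(t)\|_{L^2}^2) \le E(u_0)$ confines $\|\nabla u(t)\|_{L^2}^2$ to a sublevel set of $f$ that, as long as the solution stays on the left branch, is bounded by a constant strictly below $y_\ast$ (determined by $E(u_0)$, $M(u_0)$) — or by $y_\ast$ itself in the borderline case $E(u_0)=E(Q)$. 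Because $t\mapsto \|\nabla u(t)\|_{L^2}$ is continuous on the maximal interval of existence and cannot jump across the "gap" at $y_\ast$ without passing through it, it stays on the left branch for all $t$. This yields $\sup_{t\in I_{\max}} \|\nabla u(t)\|_{L^2} \le C(M(u_0),E(u_0)) < \infty$, and combined with mass conservation, a uniform $H^1_x$ bound. The blowup alternative from the local theory of \cite{g_8, g_6} (or the Strichartz-based theory of \cite{Guzman_Well17, Dinh_Well17, Campos21radial}) then forces $I_{\max} = \mathbb{R}$.

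The only genuinely delicate point is the borderline case of equality in \emph{both} the mass-energy and the kinetic-energy hypotheses, where $\|\nabla u(t)\|_{L^2}^2$ is a priori only $\le y_\ast$ with no strict gap; here one notes that if $\|\nabla u(t_0)\|_{L^2}^2 = y_\ast$ at some time, then by \eqref{sharpGN} and \eqref{energy-of-Q} the function $u(t_0)$ must itself be an optimizer of the Gagliardo--Nirenberg inequality, hence equal to $Q$ up to symmetries, so the solution is (a symmetry image of) $e^{it}Q$ and in particular global — while if $\|\nabla u(t)\|_{L^2}^2 < y_\ast$ for all $t$ the previous argument applies verbatim on the open left branch. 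I expect this case analysis, rather than the main coercivity estimate, to be where the care is needed; everything else is a routine application of the variational structure already recorded in \eqref{sharpGN}--\eqref{energy-of-Q}. (In practice the cited reference \cite{farah} carries this out, so in the paper it suffices to invoke it.)
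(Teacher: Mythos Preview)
Your proposal is correct and reconstructs the standard variational/continuity argument that the paper simply cites from \cite{farah} without proof. One minor slip: with $y=\|\nabla u\|_{L^2}^2$, the function should read $f(y)=\tfrac12 y - \tfrac{1}{4}\,C_{GN}\,M(u_0)^{\frac{1-b}{2}}\,y^{\frac{3+b}{2}}$ (coefficient $\tfrac14$ from the energy functional, exponent $\tfrac{3+b}{2}$ since $\|\nabla u\|_{L^2}^{3+b}=y^{(3+b)/2}$), but your identification of the maximum $y_\ast=\|\nabla Q\|_{L^2}^2$ with $f(y_\ast)=E(Q)$ and the trapping argument are unaffected.
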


\section{Spectral properties of the linearized operator}\label{Sec:spectral}

We will consistently make use of properties of solutions to the \textit{linearized equation} around the ground state.  In particular, if $u$ solves \eqref{NLS}, writing $u = e^{it}(Q+v)$, one has
\begin{equation}\label{linearized_1}
    i\partial_t v + \Delta v + K(v) + R(v) = 0,
\end{equation}
where
\begin{equation}\label{def:KR}
K(v) = |x|^{-b}Q^2(3v_1+iv_2)\qtq{and} R(v) = |x|^{-b}Q^3 G(Q^{-1} v), 
\end{equation}
with 
\begin{align*}
G(z) & = |1+z|^2(1+z)-1- 2z - \overline{z} \\
&= 3z_1^2+z_2^2+z_1^3+z_1z_2^2+i(2z_1z_2 + z_1^2z_2 + z_2^3).
\end{align*}
Note that $G(0) = G_z(0) = G_{\overline{z}}(0) = 0$.

By identifying $a+bi\in\mathbb{C}$ with $[a,b]^t\in\R^2$, equation \eqref{linearized_1} can be rewritten as 
\begin{equation}\label{linearized_eq}
    \partial_t v + \mathcal{L}v = iR(v),
\end{equation}
where
\[
\mathcal{L} = \begin{pmatrix} 0 & L_-\\-L_+ & 0\end{pmatrix},
\]
with
\[
L_+ = 1 -\Delta - 3|x|^{-b}Q^2,\quad L_- = 1-\Delta - |x|^{-b}Q^2.
\]
We call \eqref{linearized_eq} the \textit{linearized equation}, and $\mathcal{L}$ the \textit{linearized operator}. 

It is immediate to check that $L_- Q = 0$. Moreover, \[
L_+Q = -\Delta Q - 3 |x|^{-b}Q^3 = -2|x|^{-b}Q^3,
\]
so that $(L_+ Q, Q)_{L^2} < 0$. 

We first show that there can be only two non-negative directions for $\mathcal{L}$.
\begin{proposition}\label{P:coercive}
For all $v \in {H}^1(\mathbb{R}^3,\mathbb{R})$,
\begin{itemize}
\item $(L_-v,v)\gtrsim \|v\|_{{H}^1}^2$, if $( v,  Q) =  0$, 
\item $(L_+v,v)\gtrsim \|v\|_{{H}^1}^2$, if $( v,  \Delta Q) = 0$.
\end{itemize}
\end{proposition}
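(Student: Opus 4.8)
The plan is to establish the two coercivity estimates by a fairly standard compactness-plus-nondegeneracy argument, treating $L_-$ first (where the nonnegativity is essentially automatic) and then $L_+$ (where one must use the second Pohozaev/variational information about $Q$).

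For the $L_-$ estimate: since $L_- = 1 - \Delta - |x|^{-b}Q^2 \geq 0$ as a quadratic form on $H^1$ (this follows from the fact that $Q>0$ is the ground state of the Schr\"odinger operator $-\Delta - |x|^{-b}Q^2$, i.e.\ $Q$ minimizes the associated Rayleigh quotient and $L_-Q = 0$), we have $(L_-v,v)\geq 0$ for all $v$. To upgrade this to a coercive lower bound $(L_-v,v)\gtrsim\|v\|_{H^1}^2$ on the orthogonal complement of $Q$, I would argue by contradiction: suppose there is a sequence $v_n$ with $\|v_n\|_{H^1}=1$, $(v_n,Q)=0$, and $(L_-v_n,v_n)\to 0$. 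After passing to a subsequence, $v_n\rightharpoonup v_*$ weakly in $H^1$; by the compactness of the multiplication operator by $|x|^{-b}Q^2$ (which one gets from decay of $Q$ — here one should cite or invoke the decay/regularity of $Q$ used elsewhere in the paper, or use that $|x|^{-b}Q^2\in L^{3/2}+L^\infty$ with the $L^\infty$ part small at infinity, giving a compact perturbation on $\dot H^1$) one finds $(L_-v_*,v_*)\leq 0$, hence $v_*$ is a minimizer of the nonnegative quadratic form $(L_-\cdot,\cdot)$, so $L_-v_*=0$. Since the kernel of $L_-$ on $H^1$ is spanned by $Q$ (this is where nondegeneracy of the ground state enters), $v_*=\alpha Q$, and the constraint $(v_n,Q)=0$ passes to the limit to give $\alpha\|Q\|_{L^2}^2=0$, so $v_*=0$. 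But then $\|\nabla v_n\|_{L^2}^2 = (L_-v_n,v_n) - \|v_n\|_{L^2}^2 + (|x|^{-b}Q^2 v_n,v_n)\to 0$ while the potential term $\to 0$ by compactness and the $L^2$ term is bounded, forcing $\|v_n\|_{L^2}\to$ (something), and chasing the identity $1=\|v_n\|_{H^1}^2 = \|v_n\|_{L^2}^2+\|\nabla v_n\|_{L^2}^2$ together with $(L_-v_n,v_n)\to 0$ yields a contradiction. (One needs to be slightly careful: the conclusion is that $v_n\to 0$ strongly in the norm $\|v_n\|_{L^2}^2 + \|\nabla v_n\|_{L^2}^2 - (|x|^{-b}Q^2v_n,v_n) + 2\|v_n\|_{L^2}^2$ or similar, contradicting $\|v_n\|_{H^1}=1$; the cleanest route is to note $(L_-v_n,v_n)+2\|v_n\|_{L^2}^2 \gtrsim \|v_n\|_{H^1}^2$ trivially and argue the $L^2$ mass cannot all survive.)

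For the $L_+$ estimate the same scheme applies, but now $L_+$ has a negative direction — indeed $(L_+Q,Q)_{L^2}<0$ as noted in the excerpt — so the restriction to $\{(v,\Delta Q)=0\}$ is doing real work. The key algebraic fact is that $Q$ (equivalently the rescaled family) is a constrained minimizer, which forces the quadratic form $(L_+\cdot,\cdot)$ to be nonnegative on the codimension-one subspace $\{(v,\Delta Q)=0\}$: this is the standard "second variation" computation, using that $\partial_\lambda\big|_{\lambda=1}\lambda^{3/2}Q(\lambda x)$ (the scaling direction) is, up to scaling, $\sim Q + \tfrac{3}{2}\cdot(\text{something}) + x\cdot\nabla Q$ and that $\|\nabla Q\|_{L^2}^2$ is critical under the scaling constraint, producing $L_+(\text{scaling direction})\propto \Delta Q$ (one checks $L_+\big(\tfrac{d}{d\lambda}\big|_{\lambda=1}\lambda^{3/2}Q(\lambda\cdot)\big)$ is a multiple of $\Delta Q$ by differentiating the equation $-Q+\Delta Q+|x|^{-b}Q^3=0$ in the scaling parameter — the $|x|^{-b}$ weight is scale-covariant, so this goes through with the $b$-dependent exponent). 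Having established nonnegativity of $(L_+\cdot,\cdot)$ on $\{(v,\Delta Q)=0\}$, I would run the identical contradiction/compactness argument: a minimizing sequence converges weakly to $v_*$ with $L_+v_*$ in the span of $\Delta Q$ (the Lagrange multiplier), i.e.\ $v_*$ lies in a finite-dimensional space built from $\ker L_+$ and the preimage of $\Delta Q$; using nondegeneracy $\ker L_+ = \mathrm{span}\{\nabla Q\}$ (radial setting kills this, or one notes $\nabla Q$ is not radial while we may work in the radial class — since the proposition as used downstream is for radial $v$ this is clean, but even without radiality $(\nabla Q,\Delta Q)=0$ handles it), and the orthogonality $(v_n,\Delta Q)=0$, one concludes $v_*=0$ and reaches a contradiction exactly as before.

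The main obstacle I expect is not the abstract scheme but two analytic inputs that the inhomogeneity complicates: (i) the \emph{compactness} of the potential term — one must know enough decay of $Q$ near infinity and enough integrability of $|x|^{-b}Q^2$ near the origin that multiplication by $|x|^{-b}Q^2$ is a compact (relative) perturbation of $-\Delta$ on $H^1(\mathbb R^3)$; for $b\in(0,\tfrac12)$ this is fine since $|x|^{-b}\in L^2_{loc}$ and $Q$ decays exponentially, but it needs to be said; and (ii) \emph{nondegeneracy} of the ground state, i.e.\ the precise description of $\ker L_\pm$, which for the inhomogeneous equation is a known but nontrivial fact (one should cite the appropriate reference, e.g.\ the uniqueness/nondegeneracy results for $|x|^{-b}$ ground states) rather than reprove it. Once those two facts are in hand, the coercivity is a routine application of the compactness-contradiction method together with the variational (Pohozaev) identities already recorded in the excerpt.
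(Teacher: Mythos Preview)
Your approach is sound and shares the paper's three essential ingredients: (i) compactness of the potential $|x|^{-b}Q^2$ (the paper proves this by showing $|x|^{-b}Q^2\langle\nabla\rangle^{-1}$ is compact on $L^2$ via Rellich--Kondrachov), (ii) nonnegativity of $L_\pm$ on the relevant subspaces via the second variation of the Weinstein functional (the paper computes $\tfrac{d^2}{d\epsilon^2}J(Q+\epsilon v)\big|_{\epsilon=0}$ under the constraint $(v_1,\Delta Q)=0$), and (iii) identification of the kernels, cited from the literature. The paper then glues these together via spectral theory: Weyl's theorem places the essential spectrum of $\langle\nabla\rangle^{-1}L_\pm\langle\nabla\rangle^{-1}$ at $[1,\infty)$, so the spectrum below is discrete, and the kernel information pins down the second eigenvalue as strictly positive. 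Your compactness--contradiction argument with a minimizing sequence is an equivalent, slightly more hands-on way to reach the same conclusion.

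One genuine correction: you write $\ker L_+ = \mathrm{span}\{\nabla Q\}$, but that is the \emph{homogeneous} picture. Here translation invariance is broken by $|x|^{-b}$, so $L_+\partial_j Q \neq 0$; in fact $\ker L_+ = \{0\}$ (this is exactly what the paper cites). Your proposed fallback ``$(\nabla Q,\Delta Q)=0$ handles it'' would not rescue the argument if $\partial_j Q$ really were a null direction: since $\partial_j Q\perp \Delta Q$ (by parity), the quadratic form would vanish on a nonzero element of $\{\Delta Q\}^\perp$ and coercivity would fail outright. So the input you actually need is the triviality of $\ker L_+$ in the inhomogeneous setting, which is available and should be invoked in place of the homogeneous analogue.
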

\begin{proof}

\textbf{Step 1:} Writing $\langle\nabla\rangle^{-1} L_\pm\langle\nabla\rangle^{-1} = I - K_{\pm}$, we claim that $K_{\pm}: L^2 \to L^2$ is compact. To see this, we will prove that $ |x|^{-b}Q^2\langle\nabla\rangle^{-1}: L^2 \to L^2$ is compact.  Indeed, for $v\in L^2$, we can obtain the following estimates:

First, we have
\begin{equation}
\||x|^{-b}Q^2\langle\nabla\rangle^{-1}v\|_{L^2} \lesssim \||x|^{-b}Q^2\|_{L^3}\|\langle\nabla\rangle^{-1}v\|_{L^6} \lesssim \|v\|_{L^2},
\end{equation}

Next, 
\begin{align}
\|&|\nabla|^{\frac{1-b}{2}}(|x|^{-b}Q^2\langle\nabla\rangle^{-1}v)\|_{L^2} 
\\ &\lesssim 
\|\nabla(|x|^{-b}Q^2\langle\nabla\rangle^{-1}v)\|_{L^{\frac{6}{b+4}}} \\&\lesssim \||x|^{-b-1}Q^2\|_{L^{\frac{6}{b+3}}}\|\langle\nabla\rangle^{-1}v\|_{L^6} + \||x|^{-b}Q|\nabla Q|\|_{L^{\frac{6}{b+1}}}\|\langle\nabla\rangle^{-1}v\|_{L^2} \\
& \quad + \||x|^{-b}Q^2\|_{L^{\frac{6}{b+1}}}\|\nabla\langle\nabla\rangle^{-1}v\|_{L^2}\\
&\lesssim  \|v\|_{L^2}.
\end{align}
Finally, 
\begin{equation}
\||x|^{-b}Q^2\langle\nabla\rangle^{-1}v\|_{L^2(\{|x|\geq R\})} \lesssim \tfrac{1}{R^b} \|Q\|^2_{L^{\infty}}\|\langle\nabla\rangle^{-1}v\|^2_{L^{2}} \lesssim  \tfrac{1}{R^b}\|v\|_{L^2}.
\end{equation}
The desired compactness then follows from Rellich--Kondrachov.

We thus have that the eigenvalues of $I-K_{\pm}$ are discrete, and can only accumulate at $1$. Therefore, in the interval $(-\infty,\tfrac{1}{2}]$, say, $I-K_{\pm}$ has at most a finite number $\{\lambda_i^{\pm}\}_{i=1}^{N_{\pm}}$ of eigenvalues (counting multiplicity), which we assume are ordered in a non-decreasing way. By Weyl's Theorem, the essential spectrum is $[1,+\infty)$.

\textbf{Step 2:} Since $Q$ is the minimizer of the Weinstein functional
\begin{equation}
    J(f) = \frac{
    \left(\displaystyle\int |\nabla f|^2\right)^{\frac{b+3}{2}}
    \left(\displaystyle\int | f|^2\right)^{\frac{1-b}{2}}
    }{\displaystyle\int |x|^{-b}|f|^4},
\end{equation}
by writing $f = v_1 + i v_2$, the conditions $\frac{d^2}{d\epsilon^2}J(Q+\epsilon v)|_{\epsilon=0}\leq 0$ and $( v_1,  \Delta Q) = 0$ give
\begin{equation}
(L_+v_1,v_1)+(L_-v_2,v_2) \gtrsim [1+3b+ b(1-b)]\left(\int Q v_1\right)^2.
\end{equation}
Therefore, since $0<b<1$, we deduce that $L_+$ is non-negative if $(v_1, \Delta Q) = 0$, and that $L_-$ is always non-negative.

\textbf{Step 3:} We already showed that $\lambda_1^+<0$, $\lambda_2^+ \geq 0$ and $\lambda_1^-=0$. To finish the proof, it suffices to show that $\lambda_2^+>0$ and $\lambda_2^->0$, which is equivalent to showing that $\ker L_+$ is empty and that $\ker L_-$ is spanned by $Q$. These two assertions about the kernel were proved in \cite[Lemma 2.1(ii) and Proposition 5]{dBF05}.\end{proof}

We can also prove the following:
\begin{proposition} Let $\sigma(\mathcal{L})$ be the spectrum of the operator $\mathcal{\mathcal{L}}$, defined in $L^2(\Real^3)\times L^2(\Real^3)$ with domain $H^2(\Real^3)\times H^2(\Real^3)$, and let $\sigma_{ess}(\mathcal{L})$ be its essential spectrum. Then
\begin{equation}
\sigma_{ess}(\mathcal{L}) = \{i y:\,\, y \in \Real, |y| \geq 1\},\quad \sigma \cap \Real = \{-e_0,0,e_0\} \quad \text{with } e_0 >0. 
\end{equation}
Moreover, $e_0$ and $-e_0$ are simple eigenvalues  of $\mathcal{L}$ with eigenfunctions $\mathcal{Y}_+$ and $\mathcal{Y}_- = \overline{\mathcal{Y}}_+$, respectively. The kernel of $\mathcal{L}$ is spanned by $iQ$.
\end{proposition}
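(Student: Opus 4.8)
The plan is to analyze the matrix operator $\mathcal{L}$ by leveraging the scalar spectral information for $L_+$ and $L_-$ established in Proposition~\ref{P:coercive}, together with the factorization $\mathcal{L}^2 = \begin{pmatrix} -L_-L_+ & 0 \\ 0 & -L_+L_- \end{pmatrix}$. First I would determine the essential spectrum: since $|x|^{-b}Q^2$ is a relatively compact perturbation (this is essentially what Step~1 of the previous proof shows, after conjugating by $\langle\nabla\rangle$), Weyl's theorem gives that $\sigma_{ess}(L_\pm) = [1,\infty)$, and hence $\sigma_{ess}(\mathcal{L}) = \sigma_{ess}\left(\begin{pmatrix} 0 & 1-\Delta \\ -(1-\Delta) & 0\end{pmatrix}\right) = \{iy : |y|\geq 1\}$, which one computes directly via the Fourier transform.

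Next I would locate the real spectrum. The kernel computation is immediate from the scalar theory: $\mathcal{L}\begin{pmatrix} f \\ g\end{pmatrix} = 0$ forces $L_-g = 0$ and $L_+f = 0$, so $g \in \operatorname{span}\{Q\}$ and $f = 0$ (since $\ker L_+ = \{0\}$, cited from \cite{dBF05}); identifying $\mathbb{R}^2$ with $\mathbb{C}$, this says $\ker\mathcal{L} = \operatorname{span}\{iQ\}$. For the nonzero real spectrum, the standard approach (as in \cites{DR_Thre, campos2022threshold} for the homogeneous case, ultimately going back to Grillakis--Shatah--Strauss and Weinstein-type arguments) is: $L_-$ is a nonnegative operator with one-dimensional kernel, so it has a nonnegative square root $L_-^{1/2}$ on $(\ker L_-)^\perp$; one then studies the self-adjoint operator $L_-^{1/2} L_+ L_-^{1/2}$ (or equivalently $-\mathcal{L}^2$ restricted appropriately). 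Because $(L_+Q,Q) = -2(|x|^{-b}Q^4)_{L^1} < 0$ while $L_+$ has only a single negative direction and trivial kernel, a variational/min-max argument shows $L_-^{1/2}L_+L_-^{1/2}$ has exactly one negative eigenvalue $-e_0^2 < 0$ with a simple eigenfunction, the rest of its discrete spectrum being the zero coming from $\ker L_-$. Translating back, $\mathcal{L}$ has a simple real eigenvalue $e_0$ and, by the Hamiltonian symmetry of $\mathcal{L}$ (i.e.\ $\mathcal{L}$ anticommutes with complex conjugation composed with the appropriate reflection), also $-e_0$, with eigenfunctions $\mathcal{Y}_\pm$ related by $\mathcal{Y}_- = \overline{\mathcal{Y}_+}$. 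The fact that there are no other real eigenvalues follows because any such eigenvalue $\mu$ would produce a negative direction (or kernel element) of $L_-^{1/2}L_+L_-^{1/2}$ beyond the one already accounted for.

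I expect the main obstacle to be the careful handling of the inhomogeneity when setting up $L_-^{1/2}$ and verifying that the perturbation $|x|^{-b}Q^2$ is genuinely $\langle\nabla\rangle$-relatively compact and that all the integration-by-parts / quadratic-form manipulations are justified given the singularity at the origin (which is worse after differentiation). The decay and regularity of $Q$ near $0$ and at $\infty$ — presumably established elsewhere in the paper — will be needed to make sense of quantities like $(L_+Q,Q)$ and to ensure $Q \in \ker L_-$ lies in the form domain. A secondary point requiring care is ruling out embedded or complex eigenvalues off the real and imaginary axes; here one uses that $\mathcal{L}$ generates a group and invokes the structure $\mathcal{L} = J \mathcal{S}$ with $J$ skew and $\mathcal{S} = \operatorname{diag}(L_+, L_-)$ self-adjoint with finitely many negative directions, so that the total number of eigenvalues (with multiplicity) in the open right half-plane plus on the positive imaginary axis is controlled by $n(\mathcal{S})$ — a standard Hamiltonian-spectral-theory count that, combined with the explicit pair $\pm e_0$, forces $\sigma(\mathcal{L}) \cap \mathbb{R} = \{-e_0, 0, e_0\}$ and no other unstable spectrum. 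For the purposes of this paper it suffices to cite the relevant lemmas of \cite{dBF05} for the scalar kernel statements and adapt the matrix argument from \cite{campos2022threshold}*{Section 2} or \cite{DR_Thre}, pointing out the modifications forced by $|x|^{-b}$.
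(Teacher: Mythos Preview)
Your overall architecture matches the paper's: Weyl's theorem for the essential spectrum, the kernel computation from the scalar information, and the reduction of the nonzero real spectrum of $\mathcal{L}$ to the negative spectrum of the self-adjoint operator $P = L_-^{1/2}L_+L_-^{1/2}$.

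There is, however, a genuine gap in the step where you produce the negative direction of $P$. You invoke $(L_+Q,Q)<0$, but $Q$ cannot serve as a test function here: $Q$ spans $\ker L_-$, so $Q$ is \emph{not} in the range of $L_-^{1/2}$, and there is no $h$ with $L_-^{1/2}h = Q$ for which $(Ph,h)=(L_+Q,Q)$ would make sense. In other words, the range of $L_-^{1/2}$ lies in $\{Q\}^{\perp}$, so one needs an element $Z\in\{Q\}^{\perp}$ with $(L_+Z,Z)<0$. Abstract counting does not save you either: $L_+$ has one negative direction and $L_-$ has a one-dimensional kernel, so a priori the negative direction of $L_+$ could be ``absorbed'' by $\ker L_-$ and $P$ could be nonnegative.

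The paper supplies exactly the missing test function: it takes the scaling generator
\[
Z=\tfrac{3-b}{2}\,Q + x\cdot\nabla Q,
\]
checks $Z\in H^2\cap\{Q\}^{\perp}$, and computes directly that $(L_+Z,Z)=-\tfrac{2-(1-b)^2}{1-b}\int Q^2<0$. Setting $h=L_-^{-1/2}Z$ (now legitimate) gives $(Ph,h)<0$, whence $P$ has a negative eigenvalue $-e_0^2$ by min-max, and one builds $\mathcal{Y}_\pm$ from the corresponding eigenfunction. The uniqueness of this negative direction then follows, as you indicated, from the fact that $L_+\geq 0$ on a codimension-one subspace. So your outline becomes a complete proof once you replace the $Q$-based test by this explicit $Z$.
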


\begin{proof}  From Proposition~\ref{P:coercive}, we see that $L_-$ is non-negative. Since it is also self-adjoint, it has a unique square root $L_-^{\frac{1}{2}}$ with domain $H^1$. We show that the self-adjoint operator $P:= L_-^{\frac{1}{2}}L_+L_-^{\frac{1}{2}}$ on $L^2$ with domain $H^4$ has a unique negative eigenvalue. Indeed, consider
\begin{equation}
Z := \tfrac{3-b}{2} Q+x\cdot \nabla Q.
\end{equation}
Then we have that $Z \in H^2$, $Z \in \{Q\}^\perp$, and
\begin{equation}\label{Z_positive_dir} 
(L_+Z, Z) = -\tfrac{2-(1-b)^2}{1-b}\int Q^{2}<0.
\end{equation}
Defining $ h:= L_-^{-\frac{1}{2}}Z \in \{Q\}^{\perp}$, one also has
\begin{equation}
   (Ph,h) = (L_+Z,Z) < 0.
\end{equation}

Hence, by the mini-max principle and an approximation argument, $P$ has a negative eigenvalue $-e_0^2$ and an associated eigenfunction $g$. Defining $\mathcal{Y}_1 := L_-^{\frac{1}{2}}g$, $\mathcal{Y}_2 := \frac{1}{e_0} L_+ \mathcal{Y}_1$, and $\mathcal{Y}_{\pm}:= \mathcal{Y}_{1} \pm i \mathcal{Y}_{2}$, we have $\mathcal{L}\mathcal{Y}_{\pm}=\pm e_0 \mathcal{Y}_{\pm}$.
Uniqueness of the negative direction of $P$ follows from the non-negativity of $L_+$ on $\{\Delta Q\}^\perp$.  The assertions about the kernel of $\mathcal{L}$ follow from the coercivity given by Proposition~\ref{P:coercive}.\end{proof}

{It is also convenient to define a linear form associated to $\mathcal{L}$, namely
\begin{equation}\label{BFG}
B(f,g):=\tfrac{1}{2}(L_+f_1,g_1)+\tfrac{1}{2}(L_-f_2,g_2) = \tfrac{1}{2} \Im(\mathcal{L}f,g),
\end{equation}
as well as the corresponding bilinear form
\begin{equation}
    \Phi(h) := B(h,h).
\end{equation}

We now employ a co-dimensional counting argument to prove coercivity in a slightly different subspace, which will be needed in Section~\ref{Sec:bootstrap}.

\begin{corollary}\label{C:coerc_G_perp} Let $\tilde{G}^\perp$ be the subspace of all $v \in H^1$ (seen as a real vector space) such that
\begin{equation}
    (v_2,Q) = B(v,\mathcal{Y}_+)=B(v,\mathcal{Y}_-)=0.
\end{equation}
Then, for all $v \in \tilde G^\perp$, we have $\Phi(v)\gtrsim\|v\|_{H^1}^2$.
\end{corollary}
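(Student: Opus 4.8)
The plan is to deduce the coercivity on $\tilde G^\perp$ from the coercivity already established in Proposition~\ref{P:coercive} (equivalently, the positivity of $\Phi$ on $\{\Delta Q\}^\perp\times\{Q\}^\perp$, up to the finite-dimensional obstruction) by a standard co-dimension counting argument. First I would record the key algebraic facts about $\Phi$ and the eigenfunctions $\mathcal{Y}_\pm$: since $\mathcal{L}\mathcal{Y}_\pm = \pm e_0\mathcal{Y}_\pm$ and $B(f,g)=\tfrac12\Im(\mathcal{L}f,g)$, we have $B(\mathcal{Y}_+,\mathcal{Y}_+) = B(\mathcal{Y}_-,\mathcal{Y}_-) = 0$ (because $(\mathcal{L}\mathcal{Y}_\pm,\mathcal{Y}_\pm)=\pm e_0\|\mathcal{Y}_\pm\|^2$ is real, so its imaginary part vanishes after using that $\Phi$ is the quadratic form of the \emph{symmetric} operator built from $L_\pm$ — more precisely $\Phi(\mathcal{Y}_\pm)=0$ follows from the Pohozaev/eigenvalue relations, exactly as in the homogeneous case), while $B(\mathcal{Y}_+,\mathcal{Y}_-)\neq 0$; call this nonzero number $\alpha$. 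I would also note $\Phi(iQ) = \tfrac12(L_-Q,Q)=0$ and that $B(\,\cdot\,,\mathcal{Y}_\pm)$ and $(\,\cdot_2,Q)$ are three linearly independent bounded linear functionals on $H^1$, so $\tilde G^\perp$ has co-dimension three.

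Next I would show $\Phi\gtrsim\|\cdot\|_{H^1}^2$ on the three-codimensional space $\tilde G^\perp$ by first establishing that $\Phi$ is positive definite on a suitable larger space and then spending the extra directions. Concretely: let $W$ denote the span of $\{iQ,\mathcal{Y}_+,\mathcal{Y}_-\}$ (a three-dimensional space on which $\Phi$ is degenerate/indefinite in a controlled way: $\Phi$ vanishes on $iQ$ and $\mathcal{Y}_\pm$ and is $\pm$-type on their cross terms). Using Proposition~\ref{P:coercive}, $\Phi$ is non-negative on the space $\{v: (v_1,\Delta Q)=(v_2,Q)=0\}$ and strictly coercive after removing the null direction, which yields coercivity of $\Phi$ on some explicit finite-codimensional subspace $H$. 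The orthogonal complement (in the $B$-pairing sense) of $W$ is where we want coercivity, and the point is that the quadratic form $\Phi$ restricted to $W$, while not definite, has the property that its radical together with its negative part is exactly captured by the functionals $(\cdot_2,Q), B(\cdot,\mathcal{Y}_\pm)$. I would then invoke the abstract lemma (as in Duyckaerts–Merle, Duyckaerts–Roudenko, or Campos–Farah–Roudenko): if $\Phi$ is a bounded symmetric form that is coercive on a subspace of finite codimension, and $\ell_1,\dots,\ell_k$ are bounded functionals whose common kernel $\tilde G^\perp$ contains no nonzero vector $v$ with $\Phi(v)\le 0$ except possibly — and here one must verify there is \emph{no} such exception — then $\Phi$ is coercive on $\tilde G^\perp$. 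The verification that $\Phi(v)\le 0$ forces $v=0$ on $\tilde G^\perp$ reduces, by the finite-codimension coercivity, to a finite-dimensional linear-algebra computation involving only $L_\pm$ restricted to $\mathrm{span}\{Q,\Delta Q,\mathcal{Y}_1,\mathcal{Y}_2\}$ together with the relations $\Phi(\mathcal{Y}_\pm)=0$, $B(\mathcal{Y}_+,\mathcal{Y}_-)=\alpha\neq 0$, and $\Phi(iQ)=0$.

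The main obstacle will be the finite-dimensional step: one must check carefully that the three chosen constraints $(v_2,Q)=B(v,\mathcal{Y}_+)=B(v,\mathcal{Y}_-)=0$ are \emph{exactly} the ones that kill all non-positive directions of $\Phi$, with no extra null direction sneaking in. The subtlety is that $\Phi$ has a two-dimensional "null + negative" subspace coming from $\mathcal{Y}_\pm$ (where $\Phi$ looks like $2\alpha\,\mathrm{Re}(c_+\bar c_-)$ in coordinates $v = c_+\mathcal{Y}_+ + c_-\mathcal{Y}_- + \text{(coercive part)}$, an indefinite form), plus the one-dimensional kernel $\mathbb{R}\,iQ$; the two functionals $B(\cdot,\mathcal{Y}_\pm)$ must be shown to be non-degenerate on the $\mathcal{Y}$-plane (which follows from $\alpha\neq 0$ and $\Phi(\mathcal{Y}_\pm)=0$, so $B(\mathcal{Y}_\pm,\mathcal{Y}_\pm)=0$ means the Gram matrix of $\{B(\cdot,\mathcal{Y}_+),B(\cdot,\mathcal{Y}_-)\}$ on $\mathrm{span}\{\mathcal{Y}_+,\mathcal{Y}_-\}$ is $\begin{pmatrix}0&\alpha\\\alpha&0\end{pmatrix}$, invertible), and $(\cdot_2,Q)$ must be shown non-trivial on $\mathbb{R}\,iQ$ (clear, as $(Q,Q)\neq0$) and "independent" of the other two in the relevant sense. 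Once these non-degeneracies are in place, the abstract counting lemma applies verbatim and gives $\Phi(v)\gtrsim\|v\|_{H^1}^2$ on $\tilde G^\perp$. I would present this as: (i) list the bilinear relations; (ii) state and apply the abstract lemma; (iii) carry out the finite-dimensional non-degeneracy check.
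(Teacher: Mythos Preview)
Your co-dimension counting approach is correct and is essentially the same strategy the paper uses. The paper's execution is more direct: rather than invoking an abstract lemma and a Gram-matrix check, it argues by contradiction that if some nonzero $h\in\tilde G^\perp$ had $\Phi(h)\le 0$, then $\{iQ,\mathcal{Y}_+,h\}$ is linearly independent (apply $B(\cdot,\mathcal{Y}_-)$ and use $(h_2,Q)=0$) and $\Phi\le 0$ on their three-dimensional span (all cross $B$-terms vanish by $B(iQ,\cdot)\equiv 0$ and $h\in\tilde G^\perp$), contradicting the bound of at most two non-positive directions coming from Proposition~\ref{P:coercive}.
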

\begin{proof}
By the characterization of the spectrum of $\mathcal{L}$ given above, it is enough to guarantee strict positivity of of the quadratic form associated to $B$. Suppose that there exists $h\in \tilde G^{\perp}\backslash\{0\}$ such that $\Phi(h)\leq 0$. We claim that this implies that the set $\{iQ,\mathcal{Y}_+,h\}$ is linearly independent, Indeed, let $a,b,c\in \mathbb R$ be such that
\begin{equation}
    aiQ + b\mathcal{Y}_++ch =0.
\end{equation}
We note that $B(\mathcal{Y}_+,\mathcal{Y}_-) = -e_0(L_-\mathcal{Y}_2,\mathcal{Y}_2) \neq 0$ and that $B(iQ,\cdot)\equiv 0$. Hence, by the definition of $\tilde{G}^\perp$, we have that $bB(\mathcal{Y}_+,\mathcal{Y}_-)=0$. Now, since $iQ$ and $h$ are orthogonal in (the real space) $L^2$, we also have $a = c = 0$. 

We now see that $\Phi$ is non-positive on a subspace of dimension $3$, which contradicts Proposition~\ref{P:coercive}. \end{proof}}

Finally, we establish some decay and regularity properties of functions related to the spectrum of $\mathcal{L}$.  

\begin{lemma}[Spectral decay and regularity]  \label{spectral_decay} 

If $\mathcal{Y}_{\pm} \in H^2(\mathbb R^3)$ are the eigenfunctions to $\mathcal{L}$ corresponding to the real eigenvalues $\pm e_0$ and $\phi \in C^\infty_c(\mathbb{R}^3 \backslash \{0\})$, then
\begin{equation}\label{spectral_control_eigen}
\|\phi(\tfrac{x}{R}) \mathcal{Y}_{\pm}\|_{H^k} \lesssim_{\phi,k,l}  \tfrac{1}{R^l}.
\end{equation}
Moreover, $\mathcal{Y}_{\pm} \in {W}^{3,\frac{6}{5}}(\mathbb{R}^3)$.

If $\lambda \in \mathbb{R}\backslash \sigma(\mathcal{L})$ and $F \in L^2(\mathbb{R}^3)$ is such that 
\begin{equation}
\|\psi(\tfrac{x}{R}) F\|_{H^k} \lesssim_{\psi,k,l}  \tfrac{1}{R^l},
\end{equation}
for any $\psi \in C^\infty_c(\mathbb{R}^3 \backslash \{0\})$, then the solution $f \in H^2(\mathbb{R}^3)$ to 
\begin{equation}
\mathcal{L}f - \lambda f = F
\end{equation}
also satisfies
\begin{equation}\label{spectral_control_f}
\|\phi(\tfrac{x}{R}) f\|_{H^k} \lesssim_{\phi,k,l}  \tfrac{1}{R^l}.
\end{equation}
for any $\phi \in C^\infty_c(\mathbb{R}^3 \backslash \{0\})$.
Moreover, if $F \in W^{1,\frac{6}{5}}(\mathbb{R}^3)$, then $f \in {W}^{3,\frac{6}{5}}(\mathbb{R}^3)$.
\end{lemma}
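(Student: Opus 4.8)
\textbf{Proof proposal for Lemma~\ref{spectral_decay}.}

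The plan is to exploit elliptic regularity localized away from the origin, using the fact that on $\{|x|\ge c\}$ the potential $|x|^{-b}Q^2$ is smooth and rapidly decaying (because $Q$ decays exponentially). For the eigenfunctions $\mathcal{Y}_\pm$, I would first observe that the eigenvalue equation $\mathcal{L}\mathcal{Y}_\pm = \pm e_0\mathcal{Y}_\pm$ is, component-wise, a pair of coupled equations of the form $(1-\Delta)\mathcal{Y} = V\mathcal{Y} \pm e_0(\text{other component})$, where $V = 3|x|^{-b}Q^2$ or $|x|^{-b}Q^2$. Away from the origin this is an elliptic system with smooth coefficients, so a standard bootstrap (difference quotients / interior Schauder or $W^{k,p}$ estimates) upgrades $H^2_{loc}(\mathbb{R}^3\setminus\{0\})$ regularity to $H^k_{loc}$ for every $k$. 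The decay claim \eqref{spectral_control_eigen} is then a quantitative version of the same bootstrap: one shows by induction on $k$ and $l$ that on dyadic annuli $\{|x|\sim R\}$ one has $\|\mathcal{Y}_\pm\|_{H^k(\{|x|\sim R\})}\lesssim R^{-l}$. The base case $k=0$ follows from the exponential decay of $\mathcal{Y}_\pm$, which in turn follows from the fact that $\pm e_0$ lies in a spectral gap: writing $(1-\Delta-\lambda^2)\mathcal{Y} = V\mathcal{Y}$ with $\lambda^2 := 1 - e_0^2 \cdot(\text{sign adjustments})$ — more carefully, one uses that $\mathcal{Y}_1 = L_-^{1/2}g$ with $g$ an eigenfunction of $P$, and that $(L_+ - \mu)^{-1}$ and $(L_- - \mu)^{-1}$ map compactly supported (or exponentially decaying) functions to exponentially decaying functions for $\mu$ below the essential spectrum — an Agmon-type estimate. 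The inductive step uses the product rule: differentiating $(1-\Delta)\mathcal{Y} = V\mathcal{Y}\pm e_0(\cdots)$, the term $\partial^\alpha(V\mathcal{Y})$ distributes derivatives onto $V$ (which is bounded with all derivatives on the annulus, with size $\lesssim R^{-b}$ or better — in fact exponentially small) and onto $\mathcal{Y}$ (controlled by the inductive hypothesis at one lower order), and elliptic regularity on the annulus closes the estimate. The membership $\mathcal{Y}_\pm\in W^{3,6/5}$ follows by combining: near the origin, $\mathcal{Y}_\pm\in H^2\hookrightarrow L^\infty_{loc}$ is not quite enough, so instead I would use \eqref{stein_lemma} — from $\mathcal{Y}_\pm\in H^2$ and the eigenvalue equation, $\Delta\mathcal{Y}_\pm = \mathcal{Y}_\pm - V\mathcal{Y}_\pm \mp e_0(\cdots)$, and the worst term $|x|^{-b}Q^2\mathcal{Y}_\pm$ is estimated in $W^{1,6/5}$ using the Stein--Weiss/Hardy inequality \eqref{stein_lemma} with $\beta = b$ (and $\beta = b+1$ for the differentiated term), which is where the restriction on $b$ is felt; then one more application of $\langle\nabla\rangle^{-1}$-type elliptic regularity gives $\mathcal{Y}_\pm\in W^{3,6/5}$.

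For the second part, concerning $f$ solving $\mathcal{L}f - \lambda f = F$ with $\lambda\notin\sigma(\mathcal{L})$, the structure is identical: since $\lambda$ is in the resolvent set, $(\mathcal{L}-\lambda)^{-1}$ is bounded on $L^2\times L^2$, so $f\in H^2$ is well-defined, and component-wise $f$ satisfies $(1-\Delta-\lambda')f_j = Vf_j + (\text{coupling}) + F_j$ on $\mathbb{R}^3\setminus\{0\}$ with smooth coefficients. The hypothesis transfers the annular decay of $F$ to annular decay of $f$ via the same dyadic induction: on $\{|x|\sim R\}$, elliptic estimates give $\|f\|_{H^k}\lesssim \|f\|_{H^{k-2}(\text{slightly larger annulus})} + \|Vf\|_{H^{k-2}} + \|F\|_{H^{k-2}}$, and the $Vf$ term is handled because $V$ and all its derivatives are $\lesssim R^{-b}$ (indeed exponentially small) on the annulus, so the induction on $(k,l)$ closes once the base case — the decay of $\|f\|_{L^2(\{|x|\sim R\})}$ — is established. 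For that base case I would write $f = (\mathcal{L}-\lambda)^{-1}F$ and split $F = \chi_{\{|x|\le R/2\}}F + \chi_{\{|x|>R/2\}}F$; the second piece is small in $L^2$ by hypothesis (taking $k=0$, large $l$), while for the first piece the resolvent kernel's off-diagonal decay (the resolvent of a Schrödinger-type operator at a point outside the essential spectrum has exponentially decaying kernel — Combes--Thomas estimate) gives decay of $f$ on $\{|x|\sim R\}$. Alternatively, and perhaps more in the spirit of the paper, one avoids kernel estimates by a direct Agmon/weighted-energy argument: multiply the equation by $e^{\delta\langle x\rangle}\bar f$ for small $\delta$ and integrate, absorbing the $Vf$ term using the decay of $V$ and the $F$ term using its assumed decay. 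Finally, the claim $f\in W^{3,6/5}$ when $F\in W^{1,6/5}$ follows exactly as for the eigenfunctions: rewrite $\Delta f = f - \lambda' f - Vf - (\text{coupling}) - F$, estimate $Vf = |x|^{-b}Q^2 f$ and its gradient in $L^{6/5}$ using \eqref{stein_lemma} (this is where $b<1/2$, or at least $b$ in the allowed range, is used to keep $\beta = b+1 < 3/(6/5) \cdot(\text{appropriate exponent})$), and apply elliptic regularity once more.

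The main obstacle, I expect, is the base case of the decay estimates — namely establishing the qualitative exponential (or at least rapid polynomial) decay of $\mathcal{Y}_\pm$ and of $f$ in $L^2$ away from the origin. For $\mathcal{Y}_\pm$ this is a spectral-gap/Agmon argument that is standard but must be set up carefully because $\mathcal{Y}_1 = L_-^{1/2}g$ involves a non-local operator $L_-^{1/2}$, so one wants to argue instead directly at the level of $L_\pm$: $g$ solves $L_-^{1/2}L_+L_-^{1/2}g = -e_0^2 g$, from which $L_+L_-^{1/2}g = -e_0^2 L_-^{-1/2}g$, and a resolvent/commutator argument with $e^{\delta\langle x\rangle}$ is needed. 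For the inhomogeneous problem, the subtlety is purely that the potential $|x|^{-b}Q^2$ and its derivatives are singular at the origin — but since all the cutoffs $\phi,\psi$ are supported away from $0$, this singularity never actually enters the annular estimates; it only matters for the global $W^{3,6/5}$ statements, where \eqref{stein_lemma} is precisely the tool that controls it, and the restriction on $b$ guarantees the relevant exponents are admissible. I would therefore spend most of the write-up on a clean statement of the dyadic induction and the elliptic estimate on an annulus, and cite or briefly sketch the Combes--Thomas / Agmon input for the base case.
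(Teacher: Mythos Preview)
Your overall plan is correct and would work, but it takes a heavier route than the paper's own proof, and the step you flag as the ``main obstacle'' is in fact unnecessary.

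The paper writes the resolvent equation component-wise exactly as you do, but then runs a \emph{diagonal} induction on the pair $(k,l)$: the base case is $(k,l)=(3,0)$, which requires no decay at all (it follows directly from $f\in H^2$, the equation, and the fact that $\phi$ is supported away from the origin). The inductive step is $(k,l)\Rightarrow(k+1,l+1)$, obtained from the fourth-order scalar equation
\[
[(1-\Delta)^2+\lambda^2]f_1 = |x|^{-b}Q^2 L_+ f_1 + 3(1-\Delta)[|x|^{-b}Q^2 f_1] + L_+F_2 - \lambda F_1
\]
together with the commutator estimate $\|[(1-\Delta)^2+\lambda^2;\phi_R]\|_{H^{k-3}\to H^k}\lesssim R^{-1}$ and the super-polynomial decay of $Q$ on the annulus. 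Each step thus gains one derivative and one power of $R^{-1}$ simultaneously, and since $\|\cdot\|_{H^{k'}}\le\|\cdot\|_{H^k}$ for $k'\le k$, this covers all $(k,l)$. In particular there is no need for Agmon or Combes--Thomas input: the decay is manufactured internally by the bootstrap, not imported as a base case. Your approach of first establishing exponential $L^2$ decay (via spectral-gap/resolvent arguments) and then bootstrapping regularity on each annulus separately is valid, but it front-loads an analytic step that the paper's scheme avoids entirely.

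For the $W^{3,6/5}$ statement, you anticipate needing the Stein--Weiss inequality \eqref{stein_lemma}, but the paper's argument is simpler: once $f\in H^2(\mathbb{R}^3)\hookrightarrow L^\infty(\mathbb{R}^3)$, the term $|x|^{-b}Q^2 f$ lies in $L^{6/5}$ by direct H\"older (the singularity $|x|^{-b}$ is locally in $L^{6/5}$ since $6b/5<3$), so $(1-\Delta)f\in L^{6/5}$ from the equation; differentiating once produces at worst $|x|^{-b-1}Q^2 f$, which is again in $L^{6/5}$ since $(b+1)\cdot\tfrac65<3$. So your concern that ``$H^2\hookrightarrow L^\infty_{loc}$ is not quite enough'' is unfounded --- the global embedding suffices, and the weighted Sobolev machinery is not needed here.
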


\begin{remark}
By Sobolev embedding, we see that the functions $\mathcal{Y}_\pm$ and $f$ obtained above are bounded, have bounded first derivatives and decay fast at infinity.
\end{remark}
\begin{remark}
The proof below shows that the functions belong to $W^{2,p}$ for any $1 \leq p < 3/b$ and to $W^{3,q}$ for any $1 \leq  q < 3/(b+1)$, if $F$ is smooth enough.  However, in our applications below we will only need the ${W}^{3,\frac{6}{5}}$ estimates.
\end{remark}
\begin{proof}[Proof of Lemma \ref{spectral_decay}]
We only prove the second item, as the first one follows similarly. The elliptic equation $ \mathcal{L}f - \lambda f = F$ implies

\begin{equation}\label{spectral_system_f}
\begin{cases}
(1-\Delta)f_1 = 3 |x|^{-b}Q^2 f_1 + \lambda f_2 + F_2\\
(1-\Delta)f_2 =  |x|^{-b}Q^2 f_2 - \lambda f_1 + F_1.
\end{cases}
\end{equation}
That implies, for any $\phi \in C^\infty_c(\mathbb{R}^3 \backslash \{0\})$,
\begin{align*}
(1-\Delta)(\phi(\tfrac{x}{R})f_1) & = \tfrac{1}{R^2} \Delta \phi(\tfrac{x}{R}) f_1 + \tfrac{1}{R}\nabla \phi(\tfrac{x}{R})\cdot\nabla f_1  \\
& \quad +  3 |x|^{-b}Q^2 \phi(\tfrac{x}{R}) f_1 + \lambda \phi(\tfrac{x}{R}) f_2 + \phi F_2.
\end{align*}
After differentiating the above equation, we get
\begin{align}
(1&-\Delta)\partial_i\left[\phi(\tfrac{x}{R})f_1\right] \\
&= \tfrac{1}{R^2} \Delta \phi(\tfrac{x}{R}) \partial_if_1 +\tfrac{1}{R^3} \Delta \partial_i\phi(\tfrac{x}{R}) f_1  + \tfrac{1}{R}\nabla \phi(\tfrac{x}{R})\cdot\nabla \partial_if_1 \\
&\quad+ \tfrac{1}{R^2}\nabla \partial_i\phi(\tfrac{x}{R})\cdot\nabla f_1
 +  3 |x|^{-b}Q^2 \phi(\tfrac{x}{R}) \partial_if_1 \\
 &\quad- (b+1)  |x|^{-b-1}\tfrac{x_i}{|x|}  Q^2 \phi(\tfrac{x}{R}) f_1  +  6 |x|^{-b}Q \partial_i Q \phi(\tfrac{x}{R}) f_1 \\ 
&\quad +  \tfrac{3}{R} |x|^{-b}Q^2\partial_i\phi(\tfrac{x}{R}) f_1+ \lambda \phi(\tfrac{x}{R}) \partial_if_2 + \tfrac{\lambda}{R} \partial_i\phi(\tfrac{x}{R}) f_2 + \partial_i(\phi F_2),
\end{align}
with similar equations for $f_2$. Using $\phi$ and its derivatives are supported away from the origin, as well as the decay properties of $Q$, we obtain the bound 
\begin{align*}
\|\phi(\tfrac{x}{R})f\|_{H^3} & \lesssim_{\|Q\|_{\infty}^{},\phi}^{}  \|f\|_{H^1} + \|\nabla Q\|_{6}\|f\|_{3} + \|\phi(\tfrac{x}{R})F\|_{H^1}\\
&\lesssim_{\phi,Q} \|f\|_{H^1}+\|\phi(\tfrac{x}{R})F\|_{H^1}. 
\end{align*}
Therefore, equation \eqref{spectral_control_f} holds for $(k,l) = (3,0)$. Now, assuming it holds for $(k,l)$, we can deduce from \eqref{spectral_system_f} that
\begin{equation}\label{spectral_full_f}
[(1-\Delta)^2+\lambda^2]f_1 = |x|^{-b}Q^2L_+ f_1 +3(1-\Delta)[|x|^{-b}Q^2f_1] + L_+ F_2 - \lambda F_1.
\end{equation}
Choose $\tilde{\phi} \in C^{\infty}_c(\mathbb{R}^3 \backslash \{ 0 \})$ to equal $1$ at the support of $\phi$, so that $\partial^{\alpha} \phi  = \tilde{\phi}\, \partial^{\alpha} \phi  
$ for all multi-indices $\alpha$. Since the Fourier symbol of $(1-\Delta)^2+\lambda^2$ is $(1+|\xi|^2)^2+\lambda^2 \sim (1+|\xi|^2)^2$, we can write
\begin{align}
\|\phi_R f_1\|_{H^{k+1}} &\sim \|[(1-\Delta)^2+\lambda^2](\phi_R f_1)\|_{H^{k-3}}
\\&\lesssim \| \phi_R ((1-\Delta)^2+\lambda^2)(\tilde{\phi}_R f_1)\|_{H^{k-3}} \\
& \quad + \| [(1-\Delta)^2 +\lambda^2;\phi_R](\tilde{\phi}_R f_1)\|_{H^{k-3}}.
\end{align}
The first term in the right-hand side is controlled by \eqref{spectral_full_f} and by the (more than) polynomial decay of $Q$ and its derivatives, as well as on the hypothesis on $F$, giving the bound 
\begin{align*}
 \| \phi_R ((1-\Delta)^2+\lambda^2)(\tilde{\phi}_R f_1)\|_{H^{k-3}} &\lesssim \tfrac{1}{R} \|\tilde{\phi}_R f\|_{H^{k-1}} + \|\tilde{\phi}_R F\|_{H^{k-1}}\\
 &\lesssim \tfrac{1}{R} \|\tilde{\phi}_R f\|_{H^{k}} + \tfrac{1}{R^{l+1}}.
\end{align*}
The remaining term is controlled by the commutator estimate 
\begin{equation}
\|[(1-\Delta)^2+\lambda^2;\phi_R]\|_{H^{k-3}\to {H^k}} \lesssim \tfrac{1}{R}. 
\end{equation}

We then have
\begin{equation}
\|\phi_R f_1\|_{H^{k+1}} \lesssim \tfrac{1}{R} \|\tilde{\phi}_R f\|_{H^{k}} + \tfrac{1}{R^{l+1}}.
\end{equation}
The last inequality shows that, if \eqref{spectral_control_f} holds for $(k,l)$, then it also holds for $(k+1,l+1)$. As the proof for $f_2$ is completely analogous, we will omit it here.

With the decay of $f$ in hand, and recalling $H^2(\mathbb{R}^3) \hookrightarrow L^{\infty}(\mathbb{R}^3)$, we assume now that $F \in W^{1,\frac{6}{5}}$, so that by \eqref{spectral_system_f}, we have $(1-\Delta)f \in L^{\frac{6}{5}}$. After differentiating \eqref{spectral_system_f}, we obtain $f \in W^{3,\frac{6}{5}}$, as desired.\end{proof}

\section{Modulation analysis}\label{Sec:modulation}

The goal of this section is to analyze solutions of \eqref{NLS} during the times that they are close to the orbit of the ground state solution $e^{it}Q$. Here we are measuring closeness in terms of the functional
\begin{equation}
\delta(u) := \left| \int |\nabla u|^2 - \int |\nabla Q|^2\right|.
\end{equation}
When considering a fixed solution $u(t)$ to \eqref{NLS}, we will often abbreviate $\delta(u(t))$ by $\delta(t)$.  Later, we will see that (under the constraint $E(u_0)=E(Q)$), the functional $\delta(u)$ is proportional to the virial functional evaluated at $u$, which explains in part why this functional is so central to the analysis (cf. \eqref{hello-delta} below).

We fix $\delta_0>0$, which will need to be chosen sufficiently small in what follows.  The main result of this section is the following: 

\begin{proposition}[Modulation]\label{P:modulation} Let $u:I\times\R^3\to\C$ be a solution to \eqref{NLS} satisfying $M(u_0)=M(Q)$ and $E(u_0)=E(Q)$.  Let
\[
I_0=\{t\in I: \delta(u(t))<\delta_0\}.
\]
If $\delta_0$ is sufficiently small, then there exist $\alpha:I_0\to\R$, $\theta:I_0\to\R$, and $h:I_0\to H^1$ such that
\[
 u(t) = e^{i\theta(t)}[(1+\alpha(t))Q+h(t)],
\]
with
\begin{equation}\label{modulation-bds}
|\alpha(t)| \sim \|h(t)\|_{H^1_x} \sim \delta(t)\qtq{and}     |\alpha'(t)| +|\theta'(t)-1| \lesssim \delta(t).
\end{equation}
\end{proposition}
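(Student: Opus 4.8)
The plan is to construct the modulation parameters via an implicit function theorem argument and then extract the quantitative bounds \eqref{modulation-bds} from the constraints $M(u)=M(Q)$ and $E(u)=E(Q)$ together with the coercivity of the linearized operator from Proposition~\ref{P:coercive}.

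First I would set up the geometric decomposition. For $u$ with $\delta(u)$ small, one expects $u$ to be close (in $H^1$, modulo phase) to the orbit $\{e^{i\theta}Q\}$; this is standard once one knows that $\delta(u)$ small forces $\|u - e^{i\theta}Q\|_{H^1}$ small for some $\theta$, which follows from a concentration-compactness / variational argument using that $Q$ is the (essentially unique) minimizer of the Weinstein functional $J$ and that $M(u)=M(Q)$, $E(u)=E(Q)$. Granting this, I would define, for $u$ near the orbit, the parameters $(\alpha,\theta)$ by imposing two orthogonality conditions on the remainder $h = e^{-i\theta}u - (1+\alpha)Q$. The natural choice, matching the later use in Section~\ref{Sec:bootstrap}, is to require $h$ to be orthogonal to the directions spanned by $Q$ (real part) and $iQ$ (imaginary part), i.e. $(\operatorname{Re} h, Q) = (\operatorname{Im} h, Q) = 0$; equivalently $(1+\alpha) = (\operatorname{Re}(e^{-i\theta}u), Q)/\|Q\|_{L^2}^2$ and $(\operatorname{Im}(e^{-i\theta}u), Q) = 0$. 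The existence, uniqueness, and $C^1$-dependence of $(\alpha(t),\theta(t))$ then follows from the implicit function theorem applied to the map $(\theta,\alpha)\mapsto \big((\operatorname{Im}(e^{-i\theta}u),Q),\ (\operatorname{Re}(e^{-i\theta}u),Q) - (1+\alpha)\|Q\|_{L^2}^2\big)$, whose Jacobian at the base point $u = e^{i\theta}Q$, $\alpha = 0$ is nondegenerate (it is essentially $\operatorname{diag}(\|Q\|_{L^2}^2, -\|Q\|_{L^2}^2)$). Smallness of $\delta_0$ guarantees we stay in the region where the implicit function theorem applies with uniform constants, and $t\mapsto u(t)$ being continuous into $H^1$ makes $\alpha,\theta$ continuous; differentiability in $t$ comes from the equation.

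Next I would establish the size relations. The bound $\|h\|_{H^1} + |\alpha| \lesssim \delta$ comes by first noting $\|h\|_{H^1}+|\alpha| \lesssim \|u - e^{i\theta}Q\|_{H^1} \to 0$, so everything is small; then plugging $u = e^{i\theta}[(1+\alpha)Q + h]$ into the two conservation laws. Conservation of mass, $\|(1+\alpha)Q+h\|_{L^2}^2 = \|Q\|_{L^2}^2$, together with the orthogonality $(\operatorname{Re} h, Q) = 0$, gives $2\alpha\|Q\|_{L^2}^2 + \alpha^2\|Q\|_{L^2}^2 + \|h\|_{L^2}^2 = 0$, hence $\alpha = -\tfrac{1}{2\|Q\|_{L^2}^2}\|h\|_{L^2}^2 + O(\|h\|^3)$; in particular $|\alpha| \lesssim \|h\|_{L^2}^2$. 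Expanding the energy $E(u) = E(Q)$ to second order, the linear term in $h$ vanishes by the equation for $Q$, and one is left (after using the mass relation to eliminate the $\alpha$-dependence) with an identity of the schematic form $\Phi(h) = O(\|h\|_{H^1}^3)$ where $\Phi$ is the quadratic form $\tfrac12(L_+ h_1,h_1) + \tfrac12(L_- h_2, h_2)$ up to a multiple of $(L_+ Q, Q)\alpha^2 \sim \|h\|^4$, which is negligible. Since $h$ satisfies $(h_1,Q) = (h_2,Q) = 0$ and $(h_1, \Delta Q) = (h_1, Q) - \||x|^{-b}Q^3, \ldots)$... — more carefully, $(h_1,\Delta Q) = (h_1, Q - |x|^{-b}Q^3)$ which is $O(\|h\|_{H^1}\cdot 1)$, not zero, so I cannot directly apply the $L_+$-coercivity on $\{\Delta Q\}^\perp$. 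The fix is the standard one: split $h_1 = aZ_0 + h_1^\sharp$ where $Z_0$ accounts for the $\Delta Q$ direction, observe $|a| \lesssim \|h\|_{H^1}$, and use that on the orthogonal complement $\Phi$ is coercive; the correction terms are quadratically small in $\|h\|$ compared to the leading positive term only after one also exploits that $h$ is genuinely small, i.e. a bootstrap $\|h\|_{H^1}^2 \lesssim \Phi(h) + o(\|h\|_{H^1}^2) \lesssim \|h\|_{H^1}^3 + o(\|h\|_{H^1}^2)$, forcing $\Phi(h) \sim \|h\|_{H^1}^2$ and hence $\|h\|_{H^1}^2 \sim \delta$ — wait, I need to connect to $\delta$. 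That connection is: $\delta = |\|\nabla u\|_{L^2}^2 - \|\nabla Q\|_{L^2}^2| = |2\alpha(\nabla Q,\nabla Q) + 2(\nabla h_1, \nabla Q) + \|\nabla h\|_{L^2}^2 + \ldots|$, and the term $(\nabla h_1,\nabla Q) = -(h_1,\Delta Q) = -(h_1, Q - |x|^{-b}Q^3) = (h_1, |x|^{-b}Q^3)$ (using $(h_1,Q)=0$), which is $O(\|h\|_{L^2})$ a priori — this seems to make $\delta \sim \|h\|$ rather than $\|h\|^2$, consistent with the claimed $\|h\| \sim \delta$. Good: so $\delta \sim \|h\|_{H^1}$ provided $(h_1,|x|^{-b}Q^3)$ does not conspire to cancel, and this non-degeneracy is exactly what the energy identity $\Phi(h) = O(\|h\|^3)$ plus coercivity prevents — if $(h_1,|x|^{-b}Q^3)$ were $o(\|h\|)$ we could still have $\delta \sim \|\nabla h\|^2$, but in all cases $\delta \lesssim \|h\|_{H^1}$, and the reverse $\|h\|_{H^1} \lesssim \delta$ is the substantive direction, obtained by running the argument above to get $\|h\|_{H^1} \lesssim$ (linear-in-$h$ part of $\delta$) $\lesssim \delta$.

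Finally, for the dynamical bounds $|\alpha'| + |\theta' - 1| \lesssim \delta$, I would differentiate the two orthogonality conditions in $t$ and use equation \eqref{linearized_eq} (or directly \eqref{NLS}) for $\partial_t u$. Writing the equation satisfied by $h$ — of the form $\partial_t h = -\mathcal{L}h + (\text{terms involving } \alpha', \theta'-1) + (\text{quadratic } R(h))$ — and pairing against $Q$ and $iQ$ yields a $2\times 2$ linear system for $(\alpha', \theta'-1)$ with invertible (and well-conditioned, by the IFT Jacobian computation) coefficient matrix; the right-hand side is controlled by $\|h\|_{H^1} \sim \delta$ (the linear-in-$h$ terms) plus $\|R(h)\|$ which is $O(\|h\|_{H^1}^2) = O(\delta^2) = o(\delta)$. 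Here the inhomogeneity $|x|^{-b}$ enters through $R(v) = |x|^{-b}Q^3 G(Q^{-1}v)$ and its pairings with $Q$: one must check these are finite and $O(\|h\|_{H^1}^2)$, which requires the Hardy/Sobolev-type inequality \eqref{stein_lemma} and the decay/regularity of $Q$; this is precisely where the restriction $b \in (0,\tfrac12)$ is used, since after differentiating the nonlinearity the singularity $|x|^{-b-1}$ appears and must still be integrable against $Q^k$. I expect this last point — keeping all the $|x|^{-b}$-weighted error terms under control, especially in the derivative estimates where the singularity is aggravated — to be the main obstacle; the algebraic/coercivity part is routine modulo bookkeeping.
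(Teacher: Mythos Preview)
Your overall architecture (variational lemma $\Rightarrow$ implicit function theorem $\Rightarrow$ expand the conservation laws $\Rightarrow$ differentiate the orthogonality conditions) matches the paper, and you correctly locate where the restriction $b<\tfrac12$ enters. But your specific choice of orthogonality conditions is a genuine gap, and it causes two of your later steps to fail.

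You impose $(\operatorname{Re}h,Q)=(\operatorname{Im}h,Q)=0$. With $(\operatorname{Re}h,Q)=0$, the mass identity $\|(1+\alpha)Q+h\|_{L^2}^2=\|Q\|_{L^2}^2$ forces $2\alpha\|Q\|_{L^2}^2+\alpha^2\|Q\|_{L^2}^2+\|h\|_{L^2}^2=0$, hence $|\alpha|\sim\|h\|_{L^2}^2$. This is incompatible with the conclusion $|\alpha|\sim\|h\|_{H^1}$ asserted in \eqref{modulation-bds}; no amount of downstream bookkeeping can recover that relation from your decomposition. Second, as you yourself notice, $(\operatorname{Re}h,Q)=0$ does not put $h_1$ in $\{\Delta Q\}^\perp$, so the $L_+$-coercivity from Proposition~\ref{P:coercive} is unavailable and your attempted ``split off the $\Delta Q$ direction'' patch remains incomplete.

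The paper's fix is to impose $(\operatorname{Im}h,Q)=0$ and $(\operatorname{Re}h,\Delta Q)=0$ instead. Concretely: use the implicit function theorem only to choose $\theta$ (via $\Im(u,e^{i\theta}Q)=0$), set $g=e^{-i\theta}u-Q$, and then \emph{define} $\alpha:=(g_1,\Delta Q)/(Q,\Delta Q)$ and $h:=g-\alpha Q$. Now $h_1\perp\Delta Q$ by construction, so Proposition~\ref{P:coercive} applies directly and the energy expansion gives $\|h\|_{H^1}^2\lesssim\alpha^2+|\alpha|\,|(L_+Q,h_1)|+\text{h.o.t.}$ The cross term $(L_+Q,h_1)=-2(|x|^{-b}Q^3,h_1)=-2(Q,h_1)$ (using $\Delta Q=Q-|x|^{-b}Q^3$ and $h_1\perp\Delta Q$) is then controlled by the mass constraint, which now reads $2(Q,h_1)=-(2\alpha+\alpha^2)\|Q\|_{L^2}^2-\|h\|_{L^2}^2$, i.e.\ $(Q,h_1)$ is \emph{linear} in $\alpha$ to leading order. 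This yields $\|h\|_{H^1}\lesssim|\alpha|$, and the reverse follows from the same mass identity. Finally $\delta=|\,(2\alpha+\alpha^2)\|\nabla Q\|_{L^2}^2+\|\nabla h\|_{L^2}^2\,|\sim|\alpha|$ because the cross term $-2(h_1,\Delta Q)$ now vanishes. Your derivative-bound argument is then essentially correct once this decomposition is in place.
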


We begin with the following non-quantitative result.

\begin{lemma}\label{L:modulation} For any $\eps>0$, there exists $\delta_0>0$ such that for any $v\in H^1$ with $M(v)=M(Q)$ and $E(v)=E(Q)$,  
\[
\delta(v)<\delta_0\implies \inf_{\theta\in\R} \|v-e^{i\theta}Q\|_{H^1}<\eps. 
\]
\end{lemma}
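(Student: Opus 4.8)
The plan is to argue by contradiction, using a compactness (concentration-compactness) argument together with the variational characterization of $Q$. Suppose the statement fails for some $\eps_0>0$: then there is a sequence $v_n\in H^1$ with $M(v_n)=M(Q)$, $E(v_n)=E(Q)$, and $\delta(v_n)\to 0$, yet $\inf_{\theta\in\R}\|v_n-e^{i\theta}Q\|_{H^1}\geq\eps_0$ for all $n$. The condition $\delta(v_n)\to 0$ means $\|\nabla v_n\|_{L^2}^2\to\|\nabla Q\|_{L^2}^2$, so combined with $M(v_n)=M(Q)$ we see $(v_n)$ is bounded in $H^1$. Moreover, from $E(v_n)=E(Q)$ and $\|\nabla v_n\|_{L^2}^2\to\|\nabla Q\|_{L^2}^2$ we get $\||x|^{-b}|v_n|^4\|_{L^1}\to\||x|^{-b}Q^4\|_{L^1}$; in particular the weighted term is bounded away from zero, so $v_n$ does not vanish.

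Next I would apply a profile decomposition (or simply a concentration-compactness/vanishing-dichotomy argument adapted to the inhomogeneous nonlinearity) to the bounded sequence $(v_n)$. Because the weight $|x|^{-b}$ breaks translation invariance, the only relevant noncompactness is spatial spreading to infinity, and such spreading would force $\||x|^{-b}|v_n|^4\|_{L^1}\to 0$, contradicting the lower bound just obtained. Hence, after passing to a subsequence, $v_n\rightharpoonup v$ weakly in $H^1$ with $v\neq 0$ and in fact $v_n\to v$ strongly in the sense that makes the weighted quartic term converge (using the compact embedding away from the origin together with control of the weighted term near the origin via \eqref{stein_lemma}). Passing to the limit: $M(v)\leq M(Q)$, $\|\nabla v\|_{L^2}^2\leq\|\nabla Q\|_{L^2}^2$, $E(v)\leq E(Q)$, and $\||x|^{-b}v^4\|_{L^1}=\||x|^{-b}Q^4\|_{L^1}$. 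Plugging these into the sharp Gagliardo--Nirenberg inequality \eqref{sharpGN} (which $Q$ saturates), one finds $v$ must itself be an optimizer with $M(v)=M(Q)$ and $\|\nabla v\|_{L^2}=\|\nabla Q\|_{L^2}$, whence by uniqueness of the ground state (and the scaling normalizations) $v=e^{i\theta_0}Q$ for some $\theta_0$. Since now all the norms converge, $v_n\to e^{i\theta_0}Q$ strongly in $H^1$, contradicting $\inf_\theta\|v_n-e^{i\theta}Q\|_{H^1}\geq\eps_0$.

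The main obstacle is ruling out the escape-to-infinity scenario and upgrading weak to strong convergence in the presence of the singular weight $|x|^{-b}$: one must show the weighted quartic functional $f\mapsto\||x|^{-b}|f|^4\|_{L^1}$ is sequentially continuous along the bounded sequence modulo the (now excluded) noncompact mode. Near infinity this follows from the decay of the weight and boundedness in $L^4$; near the origin it follows from the Hardy--Sobolev bound \eqref{stein_lemma}, which controls $\||x|^{-b}|f|^4\|_{L^1}$ by $\|\nabla f\|_{L^2}$-type quantities and thus prevents concentration at the origin from being lost in the weak limit. Once strong convergence of this functional is in hand, the rest is a soft rigidity argument via the sharp Gagliardo--Nirenberg inequality and the uniqueness of $Q$. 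I should also note that the normalizations $M(v)=M(Q)$ and $E(v)=E(Q)$ (rather than the scaling-invariant product) are what pin down $v$ to be exactly $Q$ up to a phase, with no residual scaling parameter.
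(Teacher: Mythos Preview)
Your proof is correct and follows the same overall strategy as the paper: reduce to a sequence $v_n$ with $M(v_n)=M(Q)$, $E(v_n)=E(Q)$, $\delta(v_n)\to 0$, and show via concentration-compactness and the sharp Gagliardo--Nirenberg inequality that (along a subsequence) $v_n\to e^{i\theta_0}Q$ strongly in $H^1$.

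The organizational difference is this. The paper runs an explicit profile decomposition $v_n=\sum_j\varphi^j(\cdot-x_n^j)+r_n^J$: it first rules out $J^*=0$, then uses the chain of inequalities coming from decoupling plus sharp GN to force $J^*=1$, excludes $|x_n|\to\infty$, and finally forces $r_n\to 0$ strongly. You instead take a single weak limit and argue directly that the weighted quartic functional $f\mapsto\||x|^{-b}|f|^4\|_{L^1}$ is sequentially weakly continuous on bounded subsets of $H^1$: uniform smallness at infinity comes from the decay of $|x|^{-b}$, uniform smallness near the origin from \eqref{stein_lemma}, and convergence on compact annuli from Rellich. This is more elementary and exploits the inhomogeneity rather than working around it; the paper's profile decomposition is more systematic and transfers verbatim to translation-invariant settings. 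Both routes land on the same endgame: the weak limit $v$ saturates GN with $M(v)=M(Q)$ and $\|\nabla v\|_{L^2}=\|\nabla Q\|_{L^2}$, so $v=e^{i\theta_0}Q$, and matching of all the $H^1$ norms upgrades weak to strong convergence.
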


\begin{proof} It suffices to show that for any sequence $v_n\in H^1$ satisfying $M(v_n)\equiv M(Q)$, $E(v_n)\equiv E(Q)$, and $\delta(v_n)\to 0$, there exists $\theta_0\in\R$ such that
\[
\lim_{n\to\infty} \|v_n-e^{i\theta_0}Q\|_{H^1} = 0
\]
along a subsequence.  For this, we use concentration compactness.  In particular, choosing $v_n$ as above and passing to a subsequence, we write $v_n$ in a linear profile decomposition adapted to \eqref{sharpGN}:
\[
v_n = \sum_{j=1}^J \varphi^j(\cdot-x_n^j) + r_n^J
\]
for $1\leq J\leq J^*$, with
\begin{equation}\label{ccpote}
\lim_{J\to J^*}\lim_{n\to\infty} \| |x|^{-b} |r_n^J|^4\|_{L^1} =0.
\end{equation}
By construction, we have decoupling of the $L^2$ and $\dot H^1$ norms, along with the `potential energy' quantity appearing in \eqref{ccpote}. We may also assume that for each $j$, either $x_n^j\equiv 0$ or $|x_n^j|\to\infty$. 

We first observe that we must have $J^*\geq 1$, for otherwise using decoupling, \eqref{ccpote}, and the fact that $E(v_n)\equiv E(Q)$, we would obtain that $\| |x|^{-b} |Q|^4\|_{L^1}=0$. We next claim that $J^*=1$.  To see this, we observe that by the decoupling, nesting of $\ell^p$ spaces, and the sharp Gagliardo--Nirenberg inequality \eqref{sharpGN},
\begin{align}
\||x|^{-b}&  Q^4\|_{L^1} - o_n(1) \nonumber\\
& \leq \sum_{j=1}^{J^*}\||x|^{-b}\varphi(\cdot-x_n^j)\|_{L^1} \nonumber\\
&\leq C_{GN} \sum_{j=1}^{J^*} \|\varphi^j\|_{L^2}^{1-b}\|\nabla\varphi^j\|_{L^2}^{3+b} \leq C_{GN}\|Q\|_{L^2}^{1-b}\biggl[\sum_{j=1}^{J^*}\|\nabla\varphi^j\|_{L^2}^2\biggr]^{\frac{3+b}{2}} \label{super!}\\
& \leq C_{GN}\|Q\|_{L^2}^{1-b}\|\nabla Q\|_{L^2}^{3+b}.\nonumber
\end{align}

Sending $n\to\infty$ and using the fact that $Q$ optimizes \eqref{sharpGN}, we see that each inequality above is actually an equality.  In particular, equality  \eqref{super!} guarantees that $J^*=1$. 

Our decomposition therefore reduces to the form
\[
v_n=\varphi(x-x_n)+r_n.
\]

We can preclude the possibility that $|x_n|\to\infty$ by noting that in this case, we would have
\[
\||x|^{-b}|\varphi(x-x_n)|^4\|_{L^1} \to 0,
\]
which would yield the contradiction $\||x|^{-b}Q^4\|_{L^1}=0$.  Finally, if $r_n$ does not converge to zero strongly in $H^1$, then we can estimate as above to obtain
\[
\| |x|^{-b}Q^4\|_{L^4}\leq (1-\eta)C_{GN}\|Q\|_{L^2}^{1-b}\|\nabla Q\|_{L^2}^{3+b}
\]
for some $\eta>0$, contradicting that $Q$ optimizes \eqref{sharpGN}.  We therefore conclude that $r_n\to 0$ strongly in $H^1$, which then implies that $\varphi$ is an optimizer of \eqref{sharpGN}.  Thus $\varphi=e^{i\theta_0}Q$ for some $\theta_0\in\R$, and $v_n$ converges strongly to $\varphi$ in $H^1$.\end{proof}

We turn to the proof of Proposition~\ref{P:modulation}.  The idea is to use Lemma~\ref{L:modulation} to obtain an initial decomposition of $u(t)$ around $Q$, and then to use the implicit function theorem to obtain a choice of modulation parameters that impose the orthogonality conditions appearing in Proposition~\ref{P:coercive}.  With this choice, we will be able to use the mass and energy constraints to derive the estimates appearing \eqref{modulation-bds}.

\begin{proof}[Proof of Proposition~\ref{P:modulation}] Let $\eps>0$ and choose $\delta_0$ as in Lemma~\ref{L:modulation}.  

Using Lemma~\ref{L:modulation}, we may first define $\theta_0:I_0\to\R$ such that
\[
\|u(t)-e^{i\theta_0(t)}Q\|_{H^1} < \eps \qtq{for all}t\in I_0. 
\]

We will now modify $\theta(t)$ in order to impose an orthogonality condition.  We define
\[
\Phi:H^1\times\R\to\R \qtq{by}\Phi(v,\theta) = \Im(v, e^{i\theta}Q)_{L^2}  
\]
and let
\[
(v_0,\theta_0)=(v_0(t),\theta_0(t))=(e^{i\theta_0(t)}Q,\theta_0(t)). 
\]

Now observe that 
\[
\Phi(v_0,\theta_0)\equiv 0 \qtq{and}\tfrac{\partial \Phi}{\partial\theta}\big|_{(v_0,\theta_0)} \equiv -\|Q\|_{L^2}^2.
\]

We may therefore apply the implicit function theorem to the family of zeros $(v_0(t),\theta_0(t))$: choosing $\eta=\eta(Q)>0$ and $\eps=\eps(\eta)>0$ sufficiently small, we have for each $t\in I_0$ a function 
\[
a_t:B_\eps(e^{i\theta_0(t)}Q)\subset H^1\to B_\eta(\theta_0(t)) \subset\R
\]
such that
\[
\Phi(v,a_t(v)) = 0 \qtq{for all}v\in B_\eps(e^{i\theta_0(t)}Q).
\]

As $u(t)\in B_\eps(e^{i\theta_0(t)}Q)$ for $t\in I_0$, we may therefore define $\theta(t)=a_t(u(t))$ and (using $|\theta(t)-\theta_0(t)|<\eta$) obtain
\[
\Im(u(t),e^{i\theta(t)}Q) = 0 \qtq{and} \|u(t)-e^{i\theta(t)}Q\|_{H^1} < \eta \ll 1. 
\]

Now set 
\[
g(t) = g_1(t)+ig_2(t) = e^{-i\theta(t)}u(t) - Q,\qtq{so that}( g_2(t),Q) \equiv 0. 
\]

We further define $h(t)$ via
\[
g(t) = \alpha(t) Q + h(t),\qtq{where} \alpha=\frac{(g_1,\Delta Q)}{(Q,\Delta Q)} \in \R.
\]

It follows that
\[
u(t) = e^{i\theta(t)}[(1+\alpha(t))Q + h(t)],
\]
with $h$ satisfying the orthogonality conditions appearing in Proposition~\ref{P:coercive}, namely, 
\begin{equation}\label{mod-the-orthog}
(h_1(t),\Delta Q)\equiv 0 \qtq{and} (h_2(t),Q)\equiv 0. 
\end{equation}

To complete the proof of Proposition~\ref{P:modulation}, it therefore remains to prove the bounds appearing in \eqref{modulation-bds}.  We begin by using the fact that $E(u)=E(Q)$ and $M(u)=M(Q)$, gauge invariance, and the identity 
\[
\int g_1 Q + \nabla g_1\cdot \nabla Q - |x|^{-b} g_1 Q^3\,dx = 0
\]
to write
\begin{align*}
0=[E&(u)+M(u)]-[E(Q)+M(Q)] \\
& = \tfrac12(L_+ g_1,g_1)+\tfrac12(L_- g_2,g_2) \\
& \quad -\int |x|^{-b}[\tfrac14|g|^4 + Q g_1|g|^2]\,dx
\end{align*}

We now note that by Proposition~\ref{P:coercive} and the condition $(g_2,Q)\equiv 0$, 
\[
(L_- g_2, g_2) \gtrsim \|g_2\|_{H^1}^2.
\]

On the other hand,
\begin{align*}
(L_+ g_1,g_1) & = (\alpha L_+ Q+L_+ h_1,\alpha Q+h_1) \\
&= \alpha^2(L_+Q,Q) + 2\alpha(L_+Q, h_1) + (L_+ h_1, h_1) 
\end{align*}

We will now combine the last three displays: using Proposition~\ref{P:coercive} and $(h_1,\Delta Q)\equiv 0$, Cauchy--Schwarz, the definition of $g(t)$, and the fact that $\|g\|_{H^1}\ll 1$, we obtain 
\begin{equation}\label{first-bd-on-h}
\begin{aligned}
\|h\|_{H^1}^2 & \lesssim \alpha^2+\|g\|_{H^1}^3+\|g\|_{H^1}^4 + |\alpha(L_+Q, h_1)| \\
& \lesssim \alpha^2 + |\alpha|^3 + \|h\|_{H^1}^3 + |\alpha|\,|(L_+Q,h_1)|. 
\end{aligned}
\end{equation}

To estimate the inner product, we first recall that 
\[
L_+ Q = Q-\Delta Q - 3|x|^{-b}Q^3 = -2|x|^{-b}Q^3 \qtq{and} (h_1,\Delta Q) \equiv 0, 
\]
so that
\begin{align*}
(Q-\Delta Q - 3|x|^{-b}Q^3, h_1) = (-2|x|^{-b}Q^3,h_1)&\implies (|x|^{-b}Q^3,h_1)=(Q,h_1) \\
& \implies (L_+Q,h_1) = -2(Q,h_1).
\end{align*}

To estimate the inner product $(Q,h_1)$, we use the mass constraint:
\begin{equation}\label{use-mass-constraint}
M(Q)=M((1+\alpha)Q+h)\implies 2(Q,h_1)=(2\alpha+\alpha^2)\|Q\|_{L^2}^2 - \|h\|_{L^2}^2.
\end{equation}

Continuing from \eqref{first-bd-on-h}, we find
\[
\|h\|_{H^1}^2 \lesssim \alpha^2+|\alpha|^3 + \|h\|_{H^1}^3. 
\]
Since
\[
|\alpha| \lesssim \|g\|_{H^1}\ll1 \qtq{and} \|h\|_{H^1}\lesssim |\alpha|+\|g\|_{H^1} \ll 1,
\]
this implies
\[
\|h\|_{H^1}\lesssim |\alpha|. 
\]

Returning to \eqref{use-mass-constraint}, we also deduce that $|\alpha|\lesssim \|h\|_{H^1}$, so that we have now obtained
\[
\|h(t)\|_{H^1}\sim |\alpha(t)|
\]
for all $t\in I_0$.  To relate these quantities to $\delta(t)$, we expand
\begin{align*}
\delta(t) &= \biggl| \int |\nabla u|^2 - |\nabla Q|^2 \,dx \biggr| \\
& = \biggl| \int (2\alpha+\alpha^2)|\nabla Q|^2 + |\nabla h|^2 \,dx\biggr| = |2\alpha|\int|\nabla Q|^2\,dx + \mathcal{O}(\alpha^2),
\end{align*}
which (since $|\alpha|\ll1$) implies
\[
 |\alpha(t)| \sim \delta(t). 
\]

To complete the proof of \eqref{modulation-bds}, it remains to estimate $|\alpha'|$ and $|\theta'-1|$. Setting $f(z)=|x|^{-b}|z|^2 z$, we use the identity $g(t)=e^{-i\theta(t)}u(t)-Q$ and the equations for $u$ and $Q$ to obtain
\begin{align*}
i\partial_t g + \Delta g - \dot\theta g \ +(1-\dot\theta)Q + f(e^{-i\theta}u)-f(Q) = 0,
\end{align*}
which we may interpret as an equation in $H^{-1}$. We now multiply this equation by $Q$, integrate, and take the real part.  Recalling the orthogonality conditions \eqref{mod-the-orthog}, we obtain
\begin{align*}
(\dot\theta-1)\|Q\|_{L^2}^2 & = \Re(i\partial_t g,Q) -\dot\theta\Re(g,Q)+ \Re(g,\Delta Q)+\Re(f(e^{-i\theta}u)-f(Q),Q)  \\
& = -\tfrac{d}{dt}\Im(g,Q) -\dot\theta\Re(g,Q)+ \alpha(Q,\Delta Q) + \Re(f(e^{-i\theta}u)-f(Q),Q) \\
& = -\dot\theta\Re(g,Q)+\alpha(Q,\Delta Q) + \Re(f(e^{-i\theta}u)-f(Q),Q).
\end{align*}
Using $\|g\|_{H^1}\ll 1$, this immediately implies that $|\dot\theta|\lesssim 1$, which in turn implies that $\dot\theta\Re(g,Q)=\mathcal{O}(\delta(t))$. As we also have that 
\[
f(e^{-i\theta}u)-f(Q) =|x|^{-b}\cdot \mathcal{O}_{u,Q}(g)\qtq{and} \|g\|_{H^1}\lesssim |\alpha|+\|h\|_{H^1}\lesssim\delta,
\]
we obtain that $|\dot\theta(t)-1|\lesssim \delta(t)$ as desired.  Next, we use the definition of $\alpha$ and the orthogonality condition for $h_1$ in \eqref{mod-the-orthog} to obtain
\begin{align*}
\dot\alpha(Q,\Delta Q) &= \Re(\partial_t g,\Delta Q) \\
& = -\Im(\Delta g,\Delta Q)+\dot\theta\Im(g,\Delta Q) - \Im(f(e^{-i\theta}u)-f(Q),\Delta Q). 
\end{align*}
We now claim that integrating by parts in the first term, estimating as above, and using $|\dot\theta| \lesssim 1$, we can obtain $|\dot\alpha|\lesssim \delta(t),$ as desired.  In fact, the only difficult term is the first one.  Using the equation for $Q$ and integrating by parts, we find that the worst term to estimate will be of the form
\begin{align*}
|\langle \nabla g, |x|^{-b-1} Q^3\rangle| & \lesssim \|\nabla g\|_{L^2} \| |x|^{-(b+1)} Q^3\|_{L^{2}},
\end{align*}
where the second term is finite provided $2(b+1)<3$, i.e. $b<\tfrac12$. 
\end{proof} 

To close the section, we record the following corollary of Proposition~\ref{P:modulation}.

\begin{corollary}\label{const_modul} Let $u$ be a forward-global solution to \eqref{NLS} such that $M(u) = M(Q)$ and $E(u) = E(Q)$. If \begin{equation}\label{delta_int_exp}
 \int_0^\infty \delta(s) \, ds < \infty,
\end{equation}
then $\lim_{t\to\infty}\delta(t)=0$ and there exists $\theta_0\in \R$ such that 
\begin{equation}\label{const_mod_param}
\|u(t) - e^{i(t+\theta_0)}Q\|_{H^1} \lesssim  \int_t^\infty \delta(s)\, ds
\end{equation}
for all $t$ sufficiently large.
\end{corollary}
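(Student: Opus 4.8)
The strategy is to combine the integrability hypothesis with Proposition~\ref{P:modulation} in a soft way. Since $\delta\geq0$ and $\int_0^\infty\delta<\infty$, we have $\liminf_{t\to\infty}\delta(t)=0$, so there is a sequence $t_n\to\infty$ with $\delta(t_n)\to0$, and by taking $n$ large we may also make $\int_{t_n}^\infty\delta$ as small as we please. We may assume $u$ is not $e^{it}Q$ up to the symmetries of \eqref{NLS} (otherwise $\delta\equiv0$ and the conclusion is immediate). From \eqref{modulation-bds}, at any $t$ with $\delta(t)<\delta_0$ one has $|\alpha'(t)|\lesssim\delta(t)\sim|\alpha(t)|$ with constants depending only on $Q$, hence $|\alpha'(t)|\lesssim|\alpha(t)|$; moreover, if $\alpha(t_0)=0$ at such a $t_0$ then also $\|h(t_0)\|_{H^1}\sim\delta(t_0)=0$, whence $u(t_0)=e^{i\theta(t_0)}Q$, forcing $u$ to be a modulated ground state, contrary to assumption. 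Thus $\alpha$ has a fixed sign on each connected component of $I_0=\{\delta<\delta_0\}$, and $|\alpha|$ is $C^1$ there with $\bigl|\tfrac{d}{dt}|\alpha|\bigr|\lesssim|\alpha|$.

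The key step is to show $u$ eventually enters and remains in the modulation regime, i.e.\ $[T,\infty)\subseteq I_0$ for $T:=t_n$ with $n$ large. If not, set $t^\ast=\inf\{t>T:\delta(t)=\delta_0\}$, so that $[T,t^\ast)$ is a connected subset of $I_0$ and $\delta(t^\ast)=\delta_0$. On $[T,t^\ast)$ the inequality $\bigl|\tfrac{d}{dt}|\alpha|\bigr|\le C|\alpha|$ integrates to $|\alpha(T)|e^{-C(t-T)}\le|\alpha(t)|\le|\alpha(T)|e^{C(t-T)}$. Since $\delta(t^\ast)=\delta_0$ forces $|\alpha(t^\ast)|\gtrsim\delta_0$ while $|\alpha(T)|\sim\delta(T)$ is as small as we wish, the upper bound gives $t^\ast-T\gtrsim\log(\delta_0/\delta(T))$, which exceeds $1$ once $\delta(T)$ is small enough; the lower bound then gives $\delta(t)\gtrsim|\alpha(t)|\gtrsim\delta_0$ for all $t\in[t^\ast-1,t^\ast]$. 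Hence $\int_T^\infty\delta\ge\int_{t^\ast-1}^{t^\ast}\delta\gtrsim\delta_0$, contradicting the smallness of $\int_T^\infty\delta$. Therefore $u(t)\in I_0$ for all $t\ge T$.

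Now Proposition~\ref{P:modulation} applies on all of $[T,\infty)$, giving $u(t)=e^{i\theta(t)}[(1+\alpha(t))Q+h(t)]$ with $\|h(t)\|_{H^1}\sim|\alpha(t)|\sim\delta(t)$ and $|\theta'(t)-1|\lesssim\delta(t)$. Since $\bigl|\tfrac{d}{dt}|\alpha|\bigr|\le C|\alpha|$ on $[T,\infty)$, we have $|\alpha(s)|\ge|\alpha(t)|e^{-C}$ for $s\in[t,t+1]$, hence $|\alpha(t)|\lesssim\int_t^{t+1}|\alpha|\lesssim\int_t^\infty\delta$, so that $\delta(t)\lesssim\int_t^\infty\delta(s)\,ds$; in particular $\delta(t)\to0$ as $t\to\infty$, being bounded by the tail of a convergent integral. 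Also $\int_T^\infty|\theta'(s)-1|\,ds\lesssim\int_T^\infty\delta<\infty$, so $\theta(t)-t$ converges to some $\theta_0\in\R$ with $|\theta(t)-t-\theta_0|\le\int_t^\infty|\theta'(s)-1|\,ds\lesssim\int_t^\infty\delta(s)\,ds$. Combining, for $t\ge T$,
\begin{align*}
\|u(t)-e^{i(t+\theta_0)}Q\|_{H^1}
&\le\|\alpha(t)Q+h(t)\|_{H^1}+\bigl|e^{i\theta(t)}-e^{i(t+\theta_0)}\bigr|\,\|Q\|_{H^1}\\
&\lesssim\delta(t)+|\theta(t)-t-\theta_0|\lesssim\int_t^\infty\delta(s)\,ds,
\end{align*}
which is the desired estimate.

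I expect the main obstacle to be the middle step: because Proposition~\ref{P:modulation} only controls $u$ where $\delta<\delta_0$, one a priori only has the modulation decomposition on the possibly disconnected set $I_0$, and the point is to rule out $\delta$ climbing back up to $\delta_0$ after it has become small. This is where the exponential two-sided control on $|\alpha|$ is used — which itself relies on $\alpha$ never vanishing, i.e.\ on $u$ not being a modulated ground state — together with the finiteness of the tail $\int_T^\infty\delta$. The sharp rate $\delta(t)\lesssim\int_t^\infty\delta$ in the last step rests on the same exponential lower bound $|\alpha(s)|\gtrsim|\alpha(t)|$ over unit time intervals.
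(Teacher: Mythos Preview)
Your proof is correct, but it takes a different (and somewhat more involved) route than the paper's.

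The paper uses the estimate $|\alpha'(t)|\lesssim\delta(t)$ from \eqref{modulation-bds} \emph{directly}, without converting it to the ODE-type inequality $|\alpha'|\lesssim|\alpha|$. To show $\delta(t)\to0$, it argues by contradiction: if $\delta(t_n)\to0$ and $\delta(t_n')>\eps$ with $[t_n,t_n']\subset I_0$, then $|\alpha(t_n')-\alpha(t_n)|\lesssim\int_{t_n}^{t_n'}\delta\to0$, so $\alpha(t_n')\to0$ and hence $\delta(t_n')\lesssim|\alpha(t_n')|\to0$, a contradiction. Once $\delta(t)\to0$ is known, the final bound is just the fundamental theorem of calculus: $\alpha(t)\to0$, so $|\alpha(t)|\le\int_t^\infty|\alpha'|\lesssim\int_t^\infty\delta$.

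Your approach replaces both steps by Gronwall-type two-sided exponential control on $|\alpha|$, which in turn forces you to treat the ground-state case separately (so that $\alpha$ never vanishes and $|\alpha|$ is $C^1$), and to argue that $\delta$ cannot climb back to $\delta_0$ via a quantitative lower bound on the tail integral. This works, but the paper's argument avoids all of that: it never divides by $\alpha$, never needs a sign, and gets the tail bound in one line. One small wording issue: when you say ``the lower bound then gives $|\alpha(t)|\gtrsim\delta_0$ on $[t^\ast-1,t^\ast]$,'' what you are really using is the inequality $|\alpha(t^\ast)|\le|\alpha(s)|e^{C(t^\ast-s)}$ (the upper bound, applied backward from $t^\ast$), not the lower bound written with base point $T$.
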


\begin{proof} We first show that $\lim_{t\to\infty}\delta(t)=0$.  To see this, first observe that \eqref{delta_int_exp} implies that there exists an increasing sequence $t_n\to\infty$ such that $\delta(t_n)\to 0$.  If $\delta(t)\not\to 0$, then we may find an $\eps>0$ and a sequence $t_n'\to\infty$ such that (i) $t_n<t_n'$ for each $n$, (ii) $[t_n,t_n']\subset I_0$ for each $n$, and (iii) $\delta(t_n')>\eps$ for each $n$.  We then note that by Proposition~\ref{P:modulation} and \eqref{delta_int_exp} we have
\[
|\alpha(t_n')-\alpha(t_n)|\lesssim \int_{t_n}^{t_n'} \delta(t)\,dt\to 0 \qtq{as}n\to\infty.
\]
We now observe that $|\alpha(t_n)|\lesssim\delta(t_n)\to 0$, so that $\alpha(t_n')\to 0$ as well.  Applying Proposition~\ref{P:modulation} once more, we deduce that $\delta(t_n')\lesssim|\alpha(t_n')|\to 0$, contradicting the uniform lower bound in (iii).  

With the convergence $\delta(t)\to 0$ in place, we can now assert that $\delta(t)<\delta_0$ for all $t$ sufficiently large.  Then, using Proposition~\ref{P:modulation}, the assumption that $\delta(t)\to 0$, and the fundamental theorem of calculus, we have 
\begin{equation}\label{alpha-int-bd}
\lim_{t\to\infty}\alpha(t) = 0 \qtq{and}|\alpha(t)|\lesssim \int_t^\infty \delta(s)\,ds. 
\end{equation}
Similarly, there exists $\theta_0\in\R$ such that
\[
\lim_{t\to\infty}[\theta(t)-t]=\theta_0,\qtq{with}|\theta(t)-t-\theta_0|\lesssim \int_t^\infty\delta(s)\,ds. 
\]
Thus, using Proposition~\ref{P:modulation}, we deduce 
\begin{align*}
\|u(t)-e^{i(t+\theta_0)}Q\|_{H^1} & = \|u(t) - (1+\alpha(t))e^{i\theta(t)}Q\|_{H^1}+\mathcal{O}\biggl(\int_t^\infty \delta(s)\,ds\biggr) \\
& \lesssim \int_t^\infty \delta(s)\,ds.
\end{align*}
\end{proof}

\section{Nonscattering constrained solutions converge to the ground state}\label{Sec:constrained}

In this section, we prove that if $u$ is a threshold solution with constrained gradient that fails to scatter as $t\to\infty$, then $u$ converges exponentially to the ground state solution as $t\to\infty$.  The proof consists of two steps: (i) compactness for nonscattering constrained solutions, and (ii) convergence for compact constrained solutions. 

\subsection{Nonscattering constrained solutions are compact}

In this section, we show that if $u$ is a threshold solution with constrained gradient that fails to scatter as $t\to\infty$, then the orbit of $u$ for $t\in[0,\infty)$ is pre-compact in $H^1$.  By scaling, we may replace the assumptions on the mass-energy by assumptions on the mass and energy separately. 

\begin{proposition}\label{P:compactness} Suppose $M(u_0)=M(Q)$, $E(u_0)=E(Q)$, and $\|\nabla u_0\|_{L^2}<\|\nabla Q\|_{L^2}$.  Let $u:\R\times\R^3\to\C$ denote the corresponding solution to \eqref{NLS}, which is guaranteed to be global and uniformly bounded in $H^1$ by Proposition~\ref{P:GWP}.  If
\begin{equation}\label{SNB}
\|u\|_{S([0,\infty))}=+\infty,
\end{equation}
then
\[
\{u(t):t\in [0,\infty)\} \qtq{is pre-compact in}H^1. 
\]
The analogous claims hold backward in time. 
\end{proposition}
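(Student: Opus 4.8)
The plan is to follow the standard concentration-compactness route of Kenig--Merle, adapted to the inhomogeneous equation. The claim to be proved is that under the non-scattering hypothesis \eqref{SNB}, the orbit $\{u(t):t\geq 0\}$ is precompact in $H^1$; equivalently (since the orbit is bounded in $H^1$ by Proposition~\ref{P:GWP}), that for every sequence $t_n\to\infty$ the sequence $u(t_n)$ has an $H^1$-convergent subsequence. (If $t_n$ stays bounded this is immediate from continuity of the flow, so we may assume $t_n\to\infty$.)

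Let me think about the steps...

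So I decompose $u(t_n)$ via a linear profile decomposition adapted to $e^{it\Delta}$ in $H^1(\mathbb{R}^3)$: $u(t_n) = \sum_{j=1}^J e^{it_n^j\Delta}\phi^j(\cdot - x_n^j) + r_n^J$, with the usual orthogonality of the parameters $(t_n^j, x_n^j)$, $H^1$-decoupling, and smallness of the linear evolution of the remainder in the scattering norm $S$ as $J\to\infty$. Each profile either has $t_n^j\equiv 0$ or $|t_n^j|\to\infty$, and either $x_n^j\equiv 0$ or $|x_n^j|\to\infty$. The key structural points to extract are: first, at the level of the conserved quantities, $M(Q) = \sum_j M(\phi^j) + \lim M(r_n^J)$ and, because the kinetic energy is subcritical and the potential energy decouples, $E(Q) = \sum_j E_{\mathrm{lin}}^j + \lim E(r_n^J)$ where $E_{\mathrm{lin}}^j$ is the energy of the $j$th nonlinear profile. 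Second — and this is where the inhomogeneity forces extra care — any profile with $|x_n^j|\to\infty$ contributes zero potential energy in the limit, since $\||x|^{-b}|\phi^j(\cdot-x_n^j)|^4\|_{L^1}\to 0$; the corresponding nonlinear profile therefore solves (asymptotically) the \emph{linear} Schrödinger equation and scatters. Likewise, a profile with $|t_n^j|\to\infty$ gives rise, via the existence of wave operators for \eqref{NLS} below the threshold, to a scattering nonlinear profile.

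Next I run the standard argument: if more than one profile is nontrivial, or if the single profile is not of the form $e^{i\theta}Q$ sitting at $x_n\equiv 0$, $t_n\equiv 0$, then every nonlinear profile lies strictly below the mass-energy threshold and has constrained kinetic energy (the latter because the kinetic energy decouples and $\|\nabla u(t_n)\|_{L^2}<\|\nabla Q\|_{L^2}$ with strict inequality preserved by the variational characterization), hence each nonlinear profile scatters by the sub-threshold dichotomy (Proposition~\ref{P:GWP} together with the cited scattering results \cites{FG17, MurphyINLS, Campos21radial}). Feeding these scattering nonlinear profiles plus the small remainder into the stability Proposition then forces $\|u\|_{S([0,\infty))}<\infty$, contradicting \eqref{SNB}. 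Therefore there is exactly one profile, it has $t_n^j\equiv 0$ and $x_n^j\equiv 0$ (a diverging space translation would kill the potential energy and again produce a scattering solution, contradicting non-scattering), the remainder $r_n$ vanishes in $H^1$ (if not, a Gagliardo--Nirenberg/decoupling computation as in the proof of Lemma~\ref{L:modulation} shows $Q$ would fail to optimize \eqref{sharpGN}, or more simply the remainder carries positive mass-or-energy that would have to appear in an additional profile), and hence $u(t_n)\to\phi$ strongly in $H^1$ along the subsequence. Running the same argument backward in time gives the analogous claim for $t\to-\infty$.

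The main obstacle is handling the translation parameters $x_n^j$: because \eqref{NLS} is not translation invariant, one cannot simply translate a profile back to the origin, so one must instead argue that profiles with $|x_n^j|\to\infty$ are \emph{automatically scattering} — this is exactly the point flagged in the introduction, and it rests on the fact that for $|x|\to\infty$ the factor $|x|^{-b}$ is negligible, so the relevant nonlinear profile is a perturbation of a free solution (constructed via the small-data theory / existence of wave operators for the free flow, then upgraded by the stability Proposition). A secondary technical point is justifying the energy decoupling including the $|x|^{-b}|u|^4$ term in the profile decomposition adapted to the linear propagator (rather than to \eqref{sharpGN} as in Lemma~\ref{L:modulation}); this uses a refined Fatou/Brezis--Lieb argument together with the fact that $|x|^{-b}$ is a bounded operator between the relevant Lorentz spaces away from the origin, and the local smoothing/Strichartz bounds on $r_n^J$.
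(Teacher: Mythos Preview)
Your proposal is correct and follows essentially the same route as the paper: profile decomposition of $u(t_n)$, elimination of the vanishing and dichotomy scenarios via the sub-threshold scattering result plus stability, and the key observation that profiles with $|x_n^j|\to\infty$ are asymptotically linear and hence scatter (exactly the point the paper imports from \cite{CFGM}). The only cosmetic difference is that the paper organizes the argument explicitly into the three cases (vanishing/dichotomy/compactness), and for the strong vanishing of the remainder it argues directly that a nonzero $H^1$ remainder would push the single profile strictly below threshold (your Gagliardo--Nirenberg alternative is unnecessary here).
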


\begin{proof} The argument is fairly standard, so we will keep our presentation brief. 

Let $\{t_n\}$ be an arbitrary sequence in $[0,\infty)$; without loss of generality, we may assume $t_n\to\infty$.  We apply a profile decomposition to the $H^1$-bounded sequence $\{u(t_n)\}$ to obtain (along a subsequence)
\[
u(t_n) = \sum_{j=1}^J e^{i\tau_n^j\Delta}\varphi^j(\cdot-x_n^j) + r_n^J,\quad 0\leq J\leq J^*\in\{1,2,\dots,\infty\},
\]
where the $\varphi^j$ are nonzero profiles in $H^1$, the parameters $(\tau_n^j,x_n^j)$ satisfy asymptotic orthogonality, the mass and energy decouple appropriately, and the remainder vanishes in the sense that
\begin{equation}\label{LPD-vanishing}
\lim_{J\to J^*}\lim_{n\to\infty} \|e^{it\Delta}r_n^J\|_{S([0,\infty))}=0.
\end{equation}
We may also assume that either $x_n^j\equiv 0$ or $|x_n^j|\to\infty$, and similarly $\tau_n^j\equiv 0$ or $|\tau_n^j|\to\infty$. 

There are three possible scenarios: (i) vanshing (i.e. $J^*=0$), (ii) dichotomy (i.e $J^*\geq 2$), or (iii) compactness (i.e. $J^*=1$).

(i) If vanishing occurs, then
\[
\lim_{n\to\infty}\|e^{it\Delta}u(t_n)\|_{S([0,\infty)}\to 0.
\]
By the stability theory, this implies 
\[
\|u(t+t_n)\|_{S([0,\infty))}=\|u\|_{S((t_n,\infty))}\lesssim 1
\]
for all sufficiently large $n$, contradicting \eqref{SNB}. 

(ii) If dichotomy occurs, then we can use the mass-energy decoupling to show that each $\varphi^j$ satisfies the subthreshold assumption.  Then for each $j$, we can construct a scattering solution $v_n^j$ to \eqref{NLS}.  In particular, if $x_n^j\equiv 0$ and $\tau_n^j\equiv 0$, we let $v_n^j=v^j$ be the scattering solution with initial data $\varphi^j$; if $x_n^j\equiv 0$ and $\tau_n^j\to\pm\infty$, we let $v^j$ be the solution that scatters to $\varphi^j$ as $t\to\pm\infty$ and set $v_n^j(t)=v^j(t+\tau_n^j)$.  If $|x_n^j|\to\infty$, then we can construct a scattering solution $v_n^j$ to \eqref{NLS} with $v_n^j(0)=e^{i\tau_n^j\Delta}\varphi^j(x-x_n^j)$ via the argument of \cite[Proposition~3.2]{CFGM}; in particular, this uses approximation by the \emph{linear} Schr\"odinger equation. 

We then define an approximate solution to \eqref{NLS} by setting
\[
u_n^J(t)=\sum_{j=1}^J v_n^j(t) + e^{it\Delta} r_n^J. 
\]
Then, by construction, we have that for each $J$, $u_n^J(0)-u(t_n)\to 0$ in $H^1$ as $n\to\infty$.  Furthermore, we claim:
\begin{align}
&\limsup_{J\to J^*}\limsup_{n\to\infty}\bigl\{ \|u_n^J(0)\|_{H^1}+\|u_n^J\|_{S([0,\infty))}\bigr\} \lesssim 1, \label{unJbd} \\
&\limsup_{J\to J^*}\limsup_{n\to\infty}\||\nabla|^{s_c}[(i\partial_t+\Delta)u_n^J + |x|^{-b}|u_n^J|^2u_n^J]\|_{L_t^2 L_x^{6/5}([0,\infty)\times\R^3)} = 0. \label{unJsoln}
\end{align}
The estimates \eqref{unJbd} and \eqref{unJsoln} together with the stability result imply that $\|u\|_{S([0,\infty)}<\infty$, contradicting \eqref{SNB} and ruling out the possibility of dichotomy.  Thus it remains to establish \eqref{unJbd} and \eqref{unJsoln}.

The key ingredients for \eqref{unJbd} and \eqref{unJsoln} are the following (a) asymptotic orthogonality of the profiles, (b) the fact that each $v_n^j$ is a scattering solution to \eqref{NLS}, and (c) the vanishing of the remainder \eqref{LPD-vanishing}. For example, using the space-time bounds for $v_n^j$ and approximation by $C_c^\infty$ functions, the orthogonality of parameters implies
\begin{equation}\label{orthogonalitybd}
\lim_{n\to\infty} \{ \|v_n^j v_n^k\|_{L_t^2 L_x^{\frac{3}{1-b}}} + \|v_n^j\,|\nabla|^{s_c} v_n^k\|_{L_t^2 L_x^{\frac{6}{1-b}}} \}= 0 \qtq{for}j\neq k.
\end{equation}
Using this together with the $H^1$ decoupling and Strichartz, we can transfer the estimates for the $v_n^j$ to the entire approximate solution $u_n^J$, yielding \eqref{unJbd}.  For \eqref{unJsoln}, we denote $|x|^{-b}|z|^2 z$ by $F(z)$ and observe that
\begin{align}
(i\partial_t+\Delta)u_n^J + F(u_n^J)&= -\biggl[\sum_{j=1}^J F(v_n^j)-F(\sum_{j=1}^J v_n^j)\biggr]\label{unJerror1} \\
& \quad + F(u_n^J-e^{it\Delta}r_n^J)-F(u_n^J).\label{unJerror2}
\end{align}
We then note that (up to complex conjugates) \eqref{unJerror1} can be written as a finite linear combination of terms of the form $v_n^j v_n^k v_n^\ell$, where not all of $j,k,\ell$ are equal; the total number of terms depends on $J$, but this does not matter once one proves
\[
\lim_{n\to\infty} \| |\nabla|^{s_c}[v_n^j v_n^k v_n^\ell]\|_{L_t^2 L_x^{6/5}}=0
\]
for such triples $j,k,\ell$.To prove this, one relies on the orthogonality in the form \eqref{orthogonalitybd}, using the fractional product rule and a paraproduct estimate as in \cite{KenigMerleH12} to deal with the presence of the nonlocal operator $|\nabla|^{s_c}$.  To deal with \eqref{unJerror2}, one observes that the factor $e^{it\Delta}r_n^J$ is present and uses the space-time bounds for $u_n^J$ together with the vanishing \eqref{LPD-vanishing}. 

(iii) We have now shown that vanishing and dichotomy cannot occur, so that compactness ($J^*=1$) is the only remaining option. In particular, we have
\[
u(t_n)=e^{i\tau_n\Delta}[\phi(\cdot-x_n)] + r_n.
\]
The mass and energy decoupling guarantee that $r_n\to 0$ strongly in $H^1$.  Indeed, we have weak convergence by assumption, so that if strong convergence fails, the profile $\phi$ would obey the subthreshold hypothesis.  Then, arguing as we did to prevent dichotomy, we could deduce scattering for $u$, contradicting \eqref{SNB}.  It therefore remains to see that we must have $\tau_n\equiv 0$ and $x_n\equiv 0$.  Indeed, if $|x_n|\to\infty$ then we can argue again as in \cite[Proposition~3.2]{CFGM} to see that the solution $u$ must scatter; similarly, if $\tau_n\to\pm\infty$, then we can use stability theory (comparing $u$ with the linear solution $e^{i(t+\tau_n)\Delta}\phi$) to deduce scattering for $u$.  In particular, both of these possibilities would contradict \eqref{SNB}.  We conclude that $u(t_n)=\phi+o_n(1)$ in $H^1$, yielding compactness as desired. \end{proof}

\subsection{Convergence for compact constrained solutions}  The main goal of this section is to establish the following:

\begin{proposition}\label{P:constrained-converge} Suppose $u:{\mathbb R}\times\R^3\to\C$ is a global solution to \eqref{NLS} satisfying
\[
M(u)=M(Q),\quad E(u)=E(Q),\qtq{and} \|\nabla u_0\|_{L^2}<\|\nabla Q\|_{L^2}.
\]
Suppose further that
\[
\{u(t):t\in[0,\infty)\} \qtq{is pre-compact in} H^1.
\]
Then there exists $C,c>0$ and $\theta_0\in\R$ such that
\[
\|u(t)-e^{i(t+\theta_0)}Q\|_{H^1} \leq Ce^{-ct}\qtq{for all}t\geq 0. 
\]

\end{proposition}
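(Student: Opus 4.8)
The plan is to combine the modulation analysis of Section~\ref{Sec:modulation}, compactness, and a virial-type argument to force exponential decay. First I would observe that, since $\{u(t):t\ge0\}$ is pre-compact in $H^1$ and the solution fails to scatter, it cannot remain uniformly bounded away from the orbit of $Q$: indeed, compactness together with the sub-threshold scattering dichotomy (Proposition~\ref{P:GWP} and the scattering statement quoted in the introduction) would produce scattering for any limit point whose gradient is strictly smaller than $\|\nabla Q\|_{L^2}$, contradicting non-scattering — so the only possible limit points are on the orbit $e^{i\theta}Q$, hence $\delta(u(t))\to0$ as $t\to\infty$. In particular, $\delta(t)<\delta_0$ for all large $t$, and Proposition~\ref{P:modulation} furnishes the modulation decomposition $u(t)=e^{i\theta(t)}[(1+\alpha(t))Q+h(t)]$ with $|\alpha(t)|\sim\|h(t)\|_{H^1}\sim\delta(t)$ and $|\alpha'(t)|+|\theta'(t)-1|\lesssim\delta(t)$ on $[T_0,\infty)$.

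The heart of the matter is an integrated virial/Morawetz estimate that controls $\int_t^\infty\delta(s)\,ds$ by $\delta(t)$ plus lower-order terms. I would introduce a truncated virial functional $\mathcal{V}_R(t)=\int\chi_R(x)\,|u(t,x)|^2\,dx$ (or $\int\chi_R\,x\cdot\Im(\bar u\nabla u)$) with $\chi_R$ a smooth cutoff at scale $R$, compute $\frac{d}{dt}$ of the associated Morawetz quantity, and use the mass–energy constraint $E(u)=E(Q)$, $M(u)=M(Q)$ together with the Pohozaev identities \eqref{energy-of-Q} to identify the main term of the virial derivative as proportional to $\delta(t)$ — this is the relation flagged as \eqref{hello-delta} in the text. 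The compactness of the orbit is exactly what makes the error terms coming from the cutoff (the tails $|x|\gtrsim R$) negligible after choosing $R$ large, uniformly in $t$, while the inhomogeneity $|x|^{-b}$ contributes terms that are favorably signed or absorbable for $b<\tfrac12$. Since $\mathcal{V}_R$ is bounded (by compactness / $H^1$-boundedness), integrating the virial identity over $[t,\infty)$ yields $\int_t^\infty\delta(s)\,ds\lesssim \delta(t)+(\text{small})$, and in particular $\int_0^\infty\delta(s)\,ds<\infty$, so Corollary~\ref{const_modul} already gives $\|u(t)-e^{i(t+\theta_0)}Q\|_{H^1}\lesssim\int_t^\infty\delta(s)\,ds$ for some fixed $\theta_0$.

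To upgrade integrability to exponential decay, I would set $N(t)=\int_t^\infty\delta(s)\,ds$ and run a Gronwall/ODE argument. The modulation bounds give $\delta(t)\sim\|h(t)\|_{H^1}$, and a second, more refined virial computation — differentiating a functional of the form $\int\chi_R\,x\cdot\Im(\bar u\nabla u)$ and projecting onto the coercive directions via the linearized operator $\mathcal{L}$ and the coercivity of Proposition~\ref{P:coercive} on the orthogonal complement fixed by \eqref{mod-the-orthog} — gives a differential inequality of the shape $\frac{d}{dt}\mathcal{J}(t)\gtrsim \delta(t)$ with $|\mathcal{J}(t)|\lesssim\delta(t)$. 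Combined with $\delta(t)\lesssim N(t)$ and $N'(t)=-\delta(t)$, one obtains $N'(t)\le -cN(t)$ for $t$ large (after again discarding cutoff tails using compactness), hence $N(t)\lesssim e^{-ct}$, and then $\delta(t)\lesssim e^{-ct}$ as well. Feeding this back into \eqref{const_mod_param} yields $\|u(t)-e^{i(t+\theta_0)}Q\|_{H^1}\lesssim N(t)\lesssim e^{-ct}$, which is the claim with the constant $C$ absorbing the implicit constants on the finite interval $[0,T_0]$.

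\medskip
\noindent\textbf{Main obstacle.} The delicate point is producing the two virial/Morawetz inequalities with errors that are \emph{uniformly} controlled in time: one must exploit compactness of the orbit to make the spatial-cutoff tails arbitrarily small (uniformly in $t$) while simultaneously keeping the main term comparable to $\delta(t)$ rather than a higher power, and one must handle the singular, non-integrable-after-differentiation factor $|x|^{-b}$ — this is precisely where the restriction $b<\tfrac12$ enters, as in the last estimate of the proof of Proposition~\ref{P:modulation} (the term $\||x|^{-(b+1)}Q^3\|_{L^2}$ requires $2(b+1)<3$). Once these estimates are in place with clean error terms, the passage to exponential decay via the $N'(t)\le -cN(t)$ mechanism is routine.
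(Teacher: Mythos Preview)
Your proposal has the right overall architecture --- truncated virial, modulation to bound the boundary terms, compactness to kill the tails, then Gronwall --- and this matches the paper's approach. However, two steps are misordered or misdirected.

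First, your opening argument that $\delta(t)\to 0$ via ``limit points are sub-threshold, hence scatter'' is wrong: any $H^1$-limit of $u(t_n)$ inherits $M=M(Q)$ and $E=E(Q)$ by continuity of those functionals, so it is a \emph{threshold} datum, not sub-threshold, and the dichotomy you invoke does not apply. The paper does not establish $\delta\to 0$ prior to the virial estimate. Instead, Lemma~\ref{L:virial} is proven on \emph{finite} intervals $[t_1,t_2]$ --- splitting into times where $\delta<\delta_1$ (modulation controls the boundary terms $P_R[u(t_j)]$ and the error) and times where $\delta\geq\delta_1$ (compactness controls the cutoff tails, and the trivial bound $|P_R|\lesssim R\lesssim\tfrac{R}{\delta_1}\delta$ handles the boundary) --- yielding $\int_{t_1}^{t_2}\delta\lesssim\delta(t_1)+\delta(t_2)$ with no prior information on the size of $\delta(t_2)$. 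From this and the boundedness of $\delta$ one gets $\int_0^\infty\delta<\infty$, and \emph{only then} does Corollary~\ref{const_modul} deliver $\delta(t)\to 0$.

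Second, your Gronwall step is over-engineered and has the key inequality reversed. No ``second, more refined virial'' is needed: once $\delta(t)\to 0$ is in hand, one sends $t_2\to\infty$ in the same Lemma~\ref{L:virial} to obtain $\int_t^\infty\delta(s)\,ds\leq C\delta(t)$. With $N(t)=\int_t^\infty\delta$, this reads $N(t)\leq -CN'(t)$, i.e.\ $N'(t)\leq -cN(t)$, hence $N(t)\lesssim e^{-ct}$; then Corollary~\ref{const_modul} finishes. You wrote ``$\delta(t)\lesssim N(t)$,'' which is the wrong direction and would not close the loop. The coercivity of Proposition~\ref{P:coercive} is already baked into Proposition~\ref{P:modulation} and need not be invoked again here.
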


The key to the proof of Proposition~\ref{P:constrained-converge} will be a localized virial estimate that takes the modulation analysis of Section~\ref{Sec:modulation} into account.  This estimate will allow us to control the functional $\delta(t):=\delta(u(t))$.  To obtain the desired convergence to the ground state, we will ultimately rely on Corollary~\ref{const_modul}.

Throughout this section, we assume that $u$ is a solution as in the statement of Proposition~\ref{P:constrained-converge}. 

\begin{lemma}[Virial Estimate]\label{L:virial} There exists $C>0$ such that for any $[t_1,t_2]\in[0,\infty)$, 
\[
\int_{t_1}^{t_2}\delta(t)\,dt\leq C\{\delta(t_1)+\delta(t_2)\}. 
\]
\end{lemma}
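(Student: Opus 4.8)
The plan is to prove a localized virial identity for the solution $u$ and then exploit the fact that, under the energy constraint $E(u)=E(Q)$, the virial functional is comparable to $\delta(t)$. Let $\chi$ be a smooth radial cutoff, equal to $|x|^2$ for $|x|\le 1$ and constant for $|x|\ge 2$, and for $R>0$ set $\chi_R(x)=R^2\chi(x/R)$ and $V_R(t)=\int \chi_R(x)|u(t,x)|^2\,dx$. A standard computation gives $V_R'(t)=2\Im\int \overline{u}\,\nabla\chi_R\cdot\nabla u\,dx$ and then
\[
\tfrac{d}{dt}\Bigl[\tfrac12 V_R'(t)\Bigr] = 4\int |\nabla u|^2 - \tfrac{2(3+b)}{3+b}\cdots \text{(the unlocalized Morawetz term)} + (\text{error terms supported in }|x|\gtrsim R).
\]
More precisely, the leading part is the usual virial quantity $8\|\nabla u\|_{L^2}^2 - 2(3+b)\||x|^{-b}|u|^4\|_{L^1}$, and using $E(u)=E(Q)$ together with the Pohozaev identities \eqref{energy-of-Q} one rewrites this combination so that it is (up to harmless constants) equal to a positive multiple of $\delta(t)=\bigl|\|\nabla u\|_{L^2}^2-\|\nabla Q\|_{L^2}^2\bigr|$ when $\|\nabla u\|_{L^2}<\|\nabla Q\|_{L^2}$ — this is the relation alluded to as \eqref{hello-delta}. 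Thus the local virial identity reads, schematically, $\tfrac{d}{dt}\bigl[\tfrac12 V_R'(t)\bigr] \ge c\,\delta(t) - \text{Err}_R(t)$.

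The next step is to control the error term $\text{Err}_R(t)$. Because $\{u(t):t\ge 0\}$ is pre-compact in $H^1$, for any $\eta>0$ there is $R=R(\eta)$ so that $\int_{|x|\ge R}(|\nabla u|^2+|u|^2)\,dx<\eta$ uniformly in $t\ge 0$; this is exactly where pre-compactness is used. Hence $\text{Err}_R(t)\le \eta\cdot(\text{bounded})$. But we need to compare the error against $\delta(t)$, not merely make it small. Here one splits into two regimes. When $\delta(t)\ge\delta_0$, the pre-compactness of the orbit and the fact that $\delta$ is bounded below away from zero on that set forces (again by compactness, or by the sub-threshold scattering dichotomy which rules out $\delta(t)\to 0$ failing) a lower bound $\text{Err}_R(t)\lesssim \eta \lesssim \eta\,\delta(t)/\delta_0$, so choosing $\eta$ small absorbs it. When $\delta(t)<\delta_0$, we invoke the modulation decomposition of Proposition~\ref{P:modulation}: write $u(t)=e^{i\theta(t)}[(1+\alpha(t))Q+h(t)]$ with $\|h\|_{H^1}\sim|\alpha|\sim\delta(t)$, plug this into $\text{Err}_R(t)$, and use the fast decay of $Q$ to see that the tail of $Q$ beyond radius $R$ is $O(R^{-N})$ while the contributions involving $h$ are $O(\delta(t))$ with a small constant (taking $R$ large). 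This yields $\text{Err}_R(t)\le \tfrac{c}{2}\delta(t)$ on $I_0$.

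Combining, we obtain $\tfrac{d}{dt}\bigl[\tfrac12 V_R'(t)\bigr]\ge \tfrac{c}{2}\,\delta(t)$ for all $t\ge 0$, with $R$ now fixed. Integrating over $[t_1,t_2]$ gives
\[
\tfrac{c}{2}\int_{t_1}^{t_2}\delta(t)\,dt \le \tfrac12\bigl[V_R'(t_2)-V_R'(t_1)\bigr].
\]
It remains to bound $|V_R'(t)|$ by $\delta(t)$ (up to a constant). For this one again separates cases: where $\delta(t)<\delta_0$, substitute the modulation decomposition into $V_R'(t)=2\Im\int\overline u\,\nabla\chi_R\cdot\nabla u$; the $Q$-only term vanishes since $Q$ is real and radial (so $\Im\int \overline{Q}\,\nabla\chi_R\cdot\nabla Q=0$), leaving only cross terms and $h$-quadratic terms, each $O(\|h\|_{H^1})=O(\delta(t))$. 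Where $\delta(t)\ge \delta_0$, one uses the fact that $u$ does not scatter so the orbit is "trapped" near the ground-state mass-energy, but more simply: the only times we actually need the estimate at the endpoints $t_1,t_2$ are arbitrary, so we may instead argue that since $\int_0^\infty\delta<\infty$ would follow once we have the estimate, and since $\delta$ is bounded, we can select $t_1,t_2$ (or pass to a limiting argument) within $I_0$; alternatively, observe $|V_R'(t)|\lesssim R\|u(t)\|_{L^2}\|\nabla u(t)\|_{L^2}\lesssim_R 1\lesssim \delta(t)/\delta_0$ on $\{\delta\ge\delta_0\}$. Either way $|V_R'(t)|\lesssim\delta(t)$ for all $t\ge0$, and plugging this in at $t=t_1,t_2$ yields the claimed inequality $\int_{t_1}^{t_2}\delta\le C\{\delta(t_1)+\delta(t_2)\}$.

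The main obstacle, I expect, is the bookkeeping around the error terms in the localized virial identity: specifically, showing $\text{Err}_R(t)\lesssim \delta(t)$ uniformly (not merely $\text{Err}_R(t)\lesssim \eta$), which requires carefully combining pre-compactness (for the regime $\delta\ge\delta_0$, where one must rule out that $V_R'$ or the error fails to be controlled by the—then bounded below—quantity $\delta$) with the quantitative modulation bounds of Proposition~\ref{P:modulation} (for the regime $\delta<\delta_0$, where the decay of $Q$ and the $\sim\delta$ size of $h$ do the work). The singularity $|x|^{-b}$ enters the virial computation through the term $\||x|^{-b}|u|^4\|_{L^1}$ and its localization, but since $b<\tfrac12<2$ this is integrable and handled by the Gagliardo–Nirenberg/Hardy-type estimate \eqref{stein_lemma}, so it is not expected to cause real trouble here.
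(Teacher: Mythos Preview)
Your proposal is correct and follows essentially the same approach as the paper: a localized virial identity whose main term equals $c\,\delta(t)$ via the energy constraint (the paper's \eqref{hello-delta}), a split into the regimes $\delta(t)\ge\delta_1$ and $\delta(t)<\delta_1$ with pre-compactness handling the error in the first regime and the modulation decomposition of Proposition~\ref{P:modulation} handling it in the second, and finally the same two-regime argument to bound the boundary terms $|V_R'(t_j)|\lesssim\delta(t_j)$ (using $P_R[e^{i\theta}Q]=0$ when $\delta<\delta_1$ and the trivial bound $|V_R'|\lesssim R\lesssim R\delta/\delta_1$ when $\delta\ge\delta_1$). The paper organizes the $\delta<\delta_1$ error estimate slightly differently---it subtracts the full identity $F_R[e^{i\theta(t)}Q]-F_\infty[e^{i\theta(t)}Q]=0$ so that every error term exhibits a factor $u-e^{i\theta}Q$ paired with something supported in $\{|x|>R\}$---but this is exactly what your ``plug in the decomposition and use decay of $Q$'' amounts to.
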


\begin{proof} Let $\phi$ be a real-valued, radial function such that
\[
\phi(x)=\begin{cases} |x|^2 & |x|\leq 1, \\ \text{const} & |x|>3,\end{cases}\qtq{and} |\partial^\alpha \phi(x)|\lesssim_\alpha |x|^{2-|\alpha|}.
\]
We also impose that $\partial_r\phi\geq 0$, where $\partial_r=\nabla\cdot\tfrac{x}{|x|}$ is the radial derivative. 

Given $R\geq 1$, let
\[
w_R(x)=R^2\phi(\tfrac{x}{R}),\quad w_\infty(x)=|x|^2, 
\]
and define for $R\in[1,\infty]$ the functional
\[
P_R[u] = 2\Im\int_{\R^3}\bar u \nabla u \cdot\nabla w_R\,dx. 
\]
Then a direct computation using \eqref{NLS} and integration by parts yields
\begin{equation}\label{virial-identity}
\tfrac{d}{dt}P_R[u(t)] = F_R[u(t)],
\end{equation}
where
\begin{align*}
F_R[u] &= \int (-\Delta\Delta w_R)|u|^2 + 4\Re \bar u_j u_k\partial_{jk}[w_R] \\
&\quad\quad -|x|^{-b}|u|^4\Delta w_R - b|x|^{-b-2}|u|^4 x\cdot\nabla w_R\,dx. 
\end{align*}
In the case $R=\infty$, we use the fact that $E(u)=E(Q)$ and the identity \eqref{energy-of-Q} for $E(Q)$ to write
\begin{equation}\label{hello-delta}
\begin{aligned}
F_\infty[u] & = \int 8 |\nabla u|^2 - (6+2b)|x|^{-b}|u|^4\,dx \\
& = 8(3+b)E(u) - 4(1+b)\int |\nabla u|^2\,dx  \\
& =  4(1+b)\delta(t). 
\end{aligned}
\end{equation} 

Next, we claim that
\begin{equation}\label{virial-on-Q}
F_R[e^{i\theta}Q] = 0 \qtq{for all}R\in[1,\infty]\qtq{and}\theta\in\R.
\end{equation}
Indeed, since $Q$ is real-valued, we have
\begin{equation}\label{P_R is zero}
P_R[e^{i(t+\theta)}Q] = 0 \qtq{for all}t\in\R.
\end{equation}
As $e^{i(t+\theta)}Q$ solves \eqref{NLS}, the identity \eqref{virial-identity} implies 
\[
F_R[e^{i(t+\theta)}Q] = 0\qtq{for all}t\in\R,
\]
which (evaluating at $t=0$) implies \eqref{virial-on-Q}. 

Now we fix $R\geq 1$, which will be specified below, and we choose $\delta_1\in(0,\delta_0)$.  We then define
\[
\chi=\chi_{J},\quad J=\{t\in[t_1,t_2]:\delta(t)<\delta_1\},
\]
and denote $\chi^c=1-\chi$. Then \eqref{virial-identity} and \eqref{virial-on-Q} yield
\begin{equation}\label{modulated-virial-identity}
\begin{aligned}
\tfrac{d}{dt}P_R[u] - F_\infty[u] & = \chi^c(t)\bigl\{F_R[u]-F_\infty[u]\bigr\} \\
&\quad + \chi(t)\bigl\{F_R[u]- F_\infty[u] - (F_R[e^{i\theta(t)}Q] -F_\infty[e^{i\theta(t)}Q])\bigr\},
\end{aligned}
\end{equation}
where $\theta(t)$ is as in Proposition~\ref{P:modulation} (and, in particular, is defined on the support of $\chi(t)$).  Our task is now to bound $P_R[u(t_j)]$, as well as the terms on the right-hand-side of \eqref{modulated-virial-identity}.

Fix $j\in\{1,2\}$. If $\delta(t_j)\geq \delta_1$, then we use Cauchy--Schwarz to estimate
\begin{equation}\label{PRtj}
|P_R[u(t_j)]| \lesssim R\|u\|_{L_t^\infty H_x^1}^2 \lesssim \tfrac{R}{\delta_1} \delta(t_j). 
\end{equation}
If instead $\delta(t_j)<\delta_1$, then we use \eqref{P_R is zero} and Proposition~\ref{P:modulation} to estimate
\begin{equation}\label{PRtj2}
\begin{aligned}
|P_R[u(t_j)]| & = \biggl| 2\Im \int\bigl( \bar u \nabla u - e^{-i\theta(t_j)}Q\nabla[e^{i\theta(t_j)}Q]\bigr)\cdot\nabla w_R \,dx\biggr| \\
& \lesssim R\{\|u\|_{L_t^\infty H_x^1}+\|Q\|_{H^1}\}\|u(t_j)-e^{i\theta(t_j)}Q\|_{H^1} \\
& \lesssim R\delta(t_j). 
\end{aligned}
\end{equation}

We turn to the terms on the right-hand side of \eqref{modulated-virial-identity}:

For the $\chi^c$ term, we have $\delta(t)\geq \delta_1$, and we can estimate using $H^1$ pre-compactness.  In particular, we let $\eps>0$ and choose $R$ sufficiently large that
\[
\sup_{t\in[0,\infty)} \int_{|x|>R} |\nabla u|^2 + |x|^{-2}|u|^2 + |x|^{-b} |u|^4\,dx \ll\eps^2. 
\]
We then write
\begin{align*}
F_R&[u]-F_\infty[u] \\
& = - \int_{|x|>R} 8|\nabla u|^2 - (6+2b)|x|^{-b}|u|^4 + 4\Re \bar u_j u_k\partial_{jk}[w_R]\,dx \\
& \quad + \int_{|x|>R} (-\Delta\Delta w_R)|u|^2 - |x|^{-b} |u|^4 \Delta w_R\ - b|x|^{-b-2}|u|^4 x\cdot\nabla w_R\,dx, 
\end{align*}
from which we may deduce that 
\begin{equation}\label{chic-error}
|F_R[u(t)]-F_\infty[u(t)]| < \tfrac{\eps}{\delta_1}\delta(t) \qtq{uniformly for}t\in[t_1,t_2]\backslash J.
\end{equation}

For the $\chi$ term on the right-hand side of \eqref{modulated-virial-identity}, we set $Q(t)=e^{i\theta(t)}Q$ and expand the error term as 
\begin{equation}\label{mod-errors}
\begin{aligned}
& -\int_{|x|>R} 8[|\nabla u|^2-|\nabla Q(t)|^2]-(6+2b)|x|^{-b}[|u|^4-|Q(t)|^4]\,dx \\
& \quad + 4\int_{|x|>R} \Re[\bar u_j u_k - \bar Q_j(t) Q_k(t)]\partial_{jk}w_R -[|u|^2-|Q(t)|^2]\Delta\Delta w_R\,dx  \\
& \quad - \int_{|x|>R} |x|^{-b}[|u|^4-|Q(t)|^4]\Delta w_R  + b|x|^{-b-2}[|u|^4-|Q(t)|^4]\,x\cdot\nabla w_R \,dx.
\end{aligned}
\end{equation}
The key to estimating the terms in \eqref{mod-errors} is to observe that (i) in each term we can exhibit the difference $u(t)-Q(t)$ measured in $H^1$ and (ii) the remaining terms will contain either $u$ or $Q$ at radii $|x|>R$, so that (choosing $R$ possibly even larger depending on $Q$) they are $\mathcal{O}(\eps)$.  For example, using compactness and Proposition~\ref{P:modulation}, we have
\begin{align*}
\| \Re[\bar u_j& u_k - \bar Q_j Q_k]\partial_{jk}w_R\|_{L^1(|x|>R)} \\
& \lesssim \| \nabla[u-Q(t)]\|_{L^2(|x|>R)}^2 \\
& \lesssim \{\|\nabla u\|_{L^2(|x|>R)} + \|\nabla Q\|_{L^2(|x|>R)}\}\|u-Q(t)\|_{H^1} \lesssim \eps\delta(t). 
\end{align*}
The terms containing negative powers of $|x|$ are simpler, in the sense that we can exhibit negative powers of $R$, which can be made to be $\mathcal{O}(\eps)$ directly by choosing $R$ large enough. In particular, we obtain
\begin{equation}\label{chi-error}
|F_R[u]- F_\infty[u] - (F_R[e^{i\theta(t)}Q] -F_\infty[e^{i\theta(t)}Q])|<\eps\delta(t)\qtq{uniformly for}t\in J. 
\end{equation}

We now continue from \eqref{modulated-virial-identity}, integrating over $[t_1,t_2]$ and inserting \eqref{PRtj}, \eqref{PRtj2}, \eqref{hello-delta}, \eqref{chic-error}, and \eqref{chi-error}.  Recalling $\delta_1\ll 1$, we derive that
\[
\int_{t_1}^{t_2} \delta(t) \,dt \leq \tfrac{CR}{\delta_1}[\delta(t_1)+\delta(t_2)] + \tfrac{\eps}{\delta_1} \int_{t_1}^{t_2} \delta(t)\,dt.
\]
Choosing $\eps=\eps(\delta_1)$ sufficiently small, we complete the proof. \end{proof}

Applying the preceding lemma on intervals of the form $[0,T]$ and using boundedness of $\delta(\cdot)$, we immediately obtain the following:

\begin{corollary}\label{C:mod-virial} We have
\[
\int_0^\infty \delta(t)\,dt<\infty.
\]
\end{corollary}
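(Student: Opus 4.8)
The statement to prove is Corollary~\ref{C:mod-virial}, which follows almost immediately from Lemma~\ref{L:virial} (the Virial Estimate) combined with the boundedness of $\delta(\cdot)$.

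\medskip

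The plan is to apply the Virial Estimate on the family of intervals $[0,T]$ for $T>0$ and take $T\to\infty$. First, I would recall that Lemma~\ref{L:virial} gives a constant $C>0$ such that
\[
\int_0^T \delta(t)\,dt \leq C\{\delta(0)+\delta(T)\}
\]
for every $T\in[0,\infty)$. Next, I would observe that $\delta(t)$ is uniformly bounded on $[0,\infty)$: indeed, by the definition $\delta(u(t)) = \bigl|\,\|\nabla u(t)\|_{L^2}^2 - \|\nabla Q\|_{L^2}^2\,\bigr|$, and since the solution $u$ is global and uniformly bounded in $H^1$ (by Proposition~\ref{P:GWP}, using the constrained sub-threshold hypothesis), we have $\sup_{t\geq 0}\|\nabla u(t)\|_{L^2} <\infty$, whence $\sup_{t\geq 0}\delta(t) =: M_\delta < \infty$. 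Therefore, for every $T>0$,
\[
\int_0^T \delta(t)\,dt \leq C\{\delta(0)+\delta(T)\} \leq 2C M_\delta.
\]
Since the right-hand side is a finite bound independent of $T$, and the integrand $\delta(t)$ is nonnegative (and locally integrable, being the composition of a continuous $H^1$-valued flow with a continuous functional), the monotone convergence theorem lets me pass to the limit $T\to\infty$ and conclude
\[
\int_0^\infty \delta(t)\,dt \leq 2C M_\delta < \infty,
\]
which is exactly the claim.

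\medskip

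There is essentially no obstacle here — the corollary is a direct packaging of the already-proved Lemma~\ref{L:virial} together with the a priori $H^1$ bound on the (constrained, global) solution $u$, and the only mild care needed is in noting that $\delta(\cdot)$ is nonnegative and locally integrable so that the uniform-in-$T$ bound on $\int_0^T\delta$ upgrades to finiteness of the full integral on $[0,\infty)$.
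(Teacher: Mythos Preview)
Your proof is correct and follows exactly the approach the paper uses: the paper simply states that the corollary follows by applying Lemma~\ref{L:virial} on intervals $[0,T]$ and using boundedness of $\delta(\cdot)$, which is precisely what you have written out in detail.
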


We are now in a position to complete the proof of Proposition~\ref{P:constrained-converge}. 

\begin{proof}[Proof of Proposition~\ref{P:constrained-converge}] We begin by upgrading Corollary~\ref{C:mod-virial} to an exponential estimate.  Using Lemma~\ref{L:virial}, Corollary~\ref{C:mod-virial}, and Corollary~\ref{const_modul} (which yields $\delta(t)\to0$), we obtain
\[
\int_{t}^\infty \delta(s)\,ds\leq C\delta(t) \qtq{for all}t>0. 
\]
By Gronwall's inequality, this implies that
\[
\int_t^\infty \delta(s)\,ds \lesssim e^{-ct} \qtq{for some}c>0\qtq{and all}t>0. 
\]
Finally, by Corollary~\ref{C:mod-virial} and Corollary~\ref{const_modul}, we deduce that there exists $\theta_0\in\R$ such that
\[
\|u(t)-e^{i(t+\theta_0)}Q\|_{H^1} \lesssim e^{-ct} \qtq{for all}t>0. 
\]

\end{proof}
\section{Global unconstrained solutions converge to the ground state}\label{Sec:unconstrained}

In this section, we prove that if $u$ is a threshold solution with unconstrained gradient that exists globally forward in time, then $u$ converges exponentially to the ground state solution as $t\to\infty$.  Our proofs require that we impose some additional localization assumption on $u$ (namely, either $u$ is radial or $xu\in L^2$). 

\begin{proposition}\label{P:global-unconstrained} Suppose $u:[0,\infty)\times\R^3\to\C$ is a radial, forward-global solution to \eqref{NLS} satisfying
\[
M(u)=M(Q),\quad E(u)=E(Q),\qtq{and}\|\nabla u_0\|_{L^2}>\|\nabla Q\|_{L^2}. 
\]
Then there exists $c>0$ and $\theta_0\in\R$ such that
\[
\|u(t)-e^{i(t+\theta_0)}Q\|_{H^1} \lesssim e^{-ct} \qtq{for all}t>0. 
\]
\end{proposition}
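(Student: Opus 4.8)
The plan is to adapt the strategy of Section~\ref{Sec:constrained}, replacing the $H^1$-compactness input (which is unavailable in the unconstrained case) with the localization hypothesis $xu\in L^2$ (or radiality) together with a finite-variance virial argument. First I would record that, since $u$ exists globally forward in time and satisfies the threshold mass-energy identity with $\|\nabla u_0\|_{L^2}>\|\nabla Q\|_{L^2}$, the variational characterization of $Q$ forces $\|\nabla u(t)\|_{L^2}>\|\nabla Q\|_{L^2}$ for all $t\ge 0$, so $\delta(t)=\|\nabla u(t)\|_{L^2}^2-\|\nabla Q\|_{L^2}^2>0$ and the sign of $\delta$ is fixed. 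The key computation is the unlocalized virial identity: with $V(t)=\int|x|^2|u(t)|^2\,dx$ one has $V''(t)=F_\infty[u(t)]=4(1+b)\delta(t)$ by exactly the computation in \eqref{hello-delta}. Since $\delta(t)>0$, $V$ is strictly convex; but global existence forward in time together with the a priori bound $V(t)\ge 0$ forces $V'$ to be bounded (it cannot blow up to $+\infty$ because that is consistent with convexity, but it cannot go to $-\infty$ either since $V\ge0$), and in fact $V'$ must be monotone increasing and bounded above, hence convergent; this yields $\int_0^\infty\delta(t)\,dt=\tfrac{1}{4(1+b)}\lim_{t\to\infty}[V'(t)-V'(0)]<\infty$. (In the merely-radial case one instead uses a truncated weight $w_R$ as in Lemma~\ref{L:virial}, controlling the error terms at $|x|>R$ by the radial Strauss-type decay estimate $\|u\|_{L^\infty(|x|>R)}\lesssim R^{-1}\|u\|_{H^1}$, together with the fact that $\|\nabla u\|_{L^2}$ stays uniformly bounded on $[0,\infty)$; that a priori $H^1$ bound I would need to establish separately from the finiteness of $\int_0^\infty\delta$, probably by a bootstrap.)

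With $\int_0^\infty\delta(t)\,dt<\infty$ in hand, the proof proceeds exactly as in the constrained case. By Corollary~\ref{const_modul} we immediately get $\delta(t)\to 0$ and the bound $\|u(t)-e^{i(t+\theta_0)}Q\|_{H^1}\lesssim\int_t^\infty\delta(s)\,ds$ for large $t$ and some $\theta_0\in\R$. To upgrade the integrability to exponential decay I would reprove the virial estimate of Lemma~\ref{L:virial} in this setting, i.e. establish
\[
\int_{t_1}^{t_2}\delta(t)\,dt\le C\{\delta(t_1)+\delta(t_2)\}
\]
for $[t_1,t_2]\subset[0,\infty)$, using the modulation decomposition of Proposition~\ref{P:modulation} on the sub-level set $\{\delta<\delta_1\}$ and the (possibly truncated) virial functional $P_R[u]$, with $|P_R[u(t_j)]|\lesssim R\,\delta(t_j)$ following from \eqref{PRtj2}. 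Combined with $\int_0^\infty\delta<\infty$ and $\delta(t)\to0$, letting $t_2\to\infty$ gives $\int_t^\infty\delta(s)\,ds\le C\delta(t)$ for all $t>0$; then Gronwall yields $\int_t^\infty\delta(s)\,ds\lesssim e^{-ct}$, and one more application of Corollary~\ref{const_modul} gives the claimed bound $\|u(t)-e^{i(t+\theta_0)}Q\|_{H^1}\lesssim e^{-ct}$.

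The main obstacle I anticipate is supplying a replacement for $H^1$-compactness when estimating the error terms in the localized virial identity (the radial case), and, relatedly, obtaining the a priori bound $\sup_{t\ge0}\|u(t)\|_{H^1}<\infty$ for unconstrained solutions — in the constrained case this came free from Proposition~\ref{P:GWP}, but here it does not, and one must rule out that $\|\nabla u(t)\|_{L^2}$ grows while the solution nonetheless stays forward-global. When $xu_0\in L^2$ this is handled cleanly: the exact virial identity $V''=4(1+b)\delta\ge0$ with $V\ge0$ and forward-global existence pins down $\int_0^\infty\delta<\infty$, and then $\delta(t)\to0$ (via Corollary~\ref{const_modul}) forces $\|\nabla u(t)\|_{L^2}\to\|\nabla Q\|_{L^2}$, giving the $H^1$ bound after the fact; the subtlety is only that this argument is somewhat circular-looking and must be organized carefully (first finiteness of the integral, then decay, then the uniform bound). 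In the radial case one must also verify that the error terms $F_R[u]-F_\infty[u]$ and their modulated counterparts are genuinely $o(1)\cdot\delta(t)$ as $R\to\infty$ uniformly in $t$, which requires the uniform $H^1$ bound plus radial decay; I expect this to be the most technical part of the write-up, but conceptually routine given the tools already developed in Lemma~\ref{L:virial}.
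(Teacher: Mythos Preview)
Your proposal has the right shape but contains a sign error that breaks the main step, and leaves open exactly the gap you flag about the a priori $H^1$ bound --- which the paper resolves by a trick you are missing.

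\textbf{The sign of the virial.} In the unconstrained case $\|\nabla u(t)\|_{L^2}>\|\nabla Q\|_{L^2}$, the computation \eqref{hello-delta} gives $F_\infty[u]=4(1+b)[\|\nabla Q\|_{L^2}^2-\|\nabla u\|_{L^2}^2]=-4(1+b)\delta(t)$, not $+4(1+b)\delta(t)$. Thus $V''<0$ and $V$ is strictly \emph{concave}. Your convexity argument does not yield $\int_0^\infty\delta<\infty$ (a nonnegative convex function can certainly have $V'\to+\infty$). The correct argument is: concavity plus $V\ge 0$ on $[0,\infty)$ forces $V'(t)\ge 0$ for all $t$ (otherwise $V$ eventually becomes negative), so $V'$ is decreasing and bounded below, hence convergent, and $\int_0^\infty\delta=\tfrac{1}{4(1+b)}[V'(0)-\lim_{t\to\infty}V'(t)]<\infty$. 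The same sign governs the truncated functional: one obtains $\tfrac{d}{dt}P_R[u]\le -c\delta(t)$, so $P_R$ is \emph{decreasing}.

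\textbf{Avoiding the a priori $H^1$ bound.} The proposition assumes only radiality, so you must work with the truncated $P_R$. The paper dispenses with any separate uniform $H^1$ bound as follows. First, one chooses the cutoff $\phi$ with the additional constraint $|\partial_{jk}\phi|\le 2$, so that the gradient part of the error,
\[
\int_{|x|>R}\bigl[4\Re\bar u_j u_k\,\partial_{jk}w_R-8|\nabla u|^2\bigr]\,dx,
\]
is \emph{nonpositive} and may simply be dropped in the inequality $\tfrac{d}{dt}P_R\le -c\delta+\text{errors}$. Second, the remaining mass/potential errors are handled by radial Sobolev combined with the identity $\|\nabla u(t)\|_{L^2}^2=\|\nabla Q\|_{L^2}^2+\delta(t)$, which gives a bound of the form $R^{-2-b}[1+\delta(t)]$; on $\{\delta\ge\delta_1\}$ this is $\lesssim R^{-2-b}\delta_1^{-1}\delta(t)$, and on $\{\delta<\delta_1\}$ the modulation analysis applies and $\|u\|_{H^1}$ is trivially bounded. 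No bootstrap is needed. Third, one shows $P_R[u(t)]>0$ for all $t$: if $P_R[u(t_0)]\le 0$, then by monotonicity $P_R[u(t)]<-\eta<0$ for $t>t_0$, forcing the localized variance $\int w_R|u|^2$ to become negative, a contradiction. Thus $P_R$ is positive and decreasing, hence convergent, giving $\int_0^\infty\delta<\infty$; combined with the bound $|P_R[u(t)]|\lesssim R\,\delta(t)$ (proved as you indicate via \eqref{PRtj2} when $\delta<\delta_1$, and via $\|u\|_{H^1}^2\lesssim(1+\delta_1^{-1})\delta$ when $\delta\ge\delta_1$), one gets $P_R\to 0$ and then $\int_t^\infty\delta\lesssim\delta(t)$ directly. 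From there your Step~4 via Gronwall and Corollary~\ref{const_modul} is correct.
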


\begin{proof}  We break the proof into several steps:

\underline{\emph{Step 1. Modulated virial estimate.}}

We utilize the localized virial identity as in the proof of Lemma~\ref{L:virial} above.  In particular, we recall the notation
\[
P_R[u]= 2\Im\int \bar u\nabla u\cdot\nabla w_R\,dx. 
\]
Fix a time interval $[t_1,t_2]$ and $\delta_1\in(0,\delta_0)$, and define
\[
\chi=\chi_J,\quad J=\{t\in[t_1,t_2]:\delta(t)<\delta_1\},\qtq{and}\chi^c = 1-\chi. 
\]
Adopting the notation from the proof of Lemma~\ref{L:virial}, we fix $R\geq 1$ and obtain the modulated virial identity 
\begin{equation}\label{mod-virial-again}
\begin{aligned}
\tfrac{d}{dt}P_R[u] - F_\infty[u] & = \chi^c(t)\bigl\{F_R[u]-F_\infty[u]\bigr\} \\
&\quad + \chi(t)\bigl\{F_R[u]- F_\infty[u] - (F_R[e^{i\theta(t)}Q] -F_\infty[e^{i\theta(t)}Q])\bigr\},
\end{aligned}
\end{equation}
where $\theta(t)$ is as in Proposition~\ref{P:modulation}.  In the present setting, we also impose that the weight $\phi$ satisfy
\[
|\nabla\phi(x)|\leq 2|x| \qtq{and}|\partial_{jk}\phi|\leq 2\qtq{for all}x.
\]

By the computation in \eqref{hello-delta}, we may write
\[
F_\infty[u]=-4(1+b)\delta(t),
\]
where here we use the condition $\|\nabla u(t)\|_{L^2}>\|\nabla Q\|_{L^2}$.  Our task is then to control the terms on the right-hand side of \eqref{mod-virial-again} by $\delta(t)$.

We first consider the $\chi^c$ term.  We begin by writing 
\begin{align}
F_R[u] - F_\infty[u] & = \int_{|x|>R} [4\Re \bar u_j u_k \partial_{jk}w_R - 8|\nabla u|^2 ]\,dx \label{it's negative} \\
& \quad + \int_{|x|>R} \mathcal{O}(R^{-2} |u|^2 + R^{-b} |u|^4)\,dx \label{LR-error}
\end{align}
By construction, we have that \eqref{it's negative}$\leq 0$. On the support of $\chi^c(t)$, we have $\delta(t)\geq \delta_1$, and hence we have the trivial estimate
\[
R^{-2}\|u\|_{L^2}^2 \lesssim R^{-2}\tfrac{1}{\delta_1}\delta(t). 
\]
For the remaining term in \eqref{LR-error}, we use the radial Sobolev embedding estimate, Young's inequality, the bound $\|\nabla u\|_{L^2}^2\lesssim 1+\delta(t)$, and the fact that $\delta_1<\delta(t)$ to obtain
\begin{align*}
R^{-b}\| u\|_{L^4}^4 & \lesssim R^{-2-b}\|xu\|_{L^\infty}^2 \|u\|_{L^2}^2 \\
& \lesssim R^{-2-b}\|u\|_{L^2}^3 \|\nabla u\|_{L^2} \\
& \lesssim R^{-2-b}[\|u\|_{L^2}^4 +\|\nabla u\|_{L^2}^2] \\
& \lesssim R^{-2-b}[1+\delta(t)] \lesssim R^{-2-b}[\tfrac{1}{\delta_1}+1]\delta(t). 
\end{align*}

We turn to the $\chi$ term in \eqref{mod-virial-again} and write
\begin{align*}
F_R[u]&-F_\infty[u] - (F_R[Q(t)]-F_\infty[Q(t)]) \\
&= \int_{|x|>R} 4\Re\partial_{jk}w_R[\bar u_j u_k -\bar Q_j(t)Q_k(t)] -8 [|\nabla u|^2 - |\nabla Q(t)|^2]\,dx \\
& \quad + \mathcal{O}\biggl[\int_{|x|>R} R^{-2}[|u|^2-|Q(t)|^2] + R^{-b}[|u|^4-|Q(t)|^4]\,dx\biggr]\,dx,
\end{align*}
where we denote $Q(t)=e^{i\theta(t)}Q$. Using Proposition~\ref{P:modulation} and the decay of $Q$, the first line on the right-hand side above may be estimated by
\begin{align*}
\bigl\{\|u(t)-Q(t)\|_{H^1}&+\|Q\|_{H^1(|x|>R)}\bigr\} \|u(t)-Q(t)\|_{H^1} \\
&\lesssim \{\delta(t)+\eta(R)\}\delta(t) \lesssim \{\delta_0+\eta(R)\}\delta(t),
\end{align*}
where here and below we denote
\[
\eta(R):=\|Q\|_{H^1(|x|>R)}. 
\]
For the second line, we use the the fact $u$ is $H^1$-bounded on the support of $\chi$ and obtain an estimate of the form
\[
[R^{-2}+R^{-b}]\delta(t).
\]
Continuing from \eqref{mod-virial-again} and collecting our estimates, we deduce that
\begin{equation}\label{mod-virial-again-conclusion}
\begin{aligned}
\tfrac{d}{dt}P_R[u] & \leq -2c\delta(t) + \mathcal{O}\{R^{-b}+R^{-2}\tfrac{1}{\delta_1}+\delta_1+\eta(R)\}\delta(t)\\
& \leq -c\delta(t)
\end{aligned}
\end{equation}
for some $c>0$, provided we choose $\delta_1$ sufficiently small and $R=R(\delta_1,Q)$ sufficiently large. Integrating over $[t_1,t_2]$, we obtain the virial estimate
\begin{equation}\label{virial-estimate-again}
\int_{t_1}^{t_2}c\delta(t)\,dt\leq P_R[u(t_1)]-P_R[u(t_2)]. 
\end{equation}

\underline{\emph{Step 2. Positivity and upper bounds for $P_R[u]$.}}

A direct computation again using \eqref{NLS} and integration by parts shows that
\[
P_R[u] = \tfrac{d}{dt} \int |u|^2 w_R\,dx. 
\]
Now suppose that $P_R[u(t_0)]\leq 0$ for some $t_0$.  It then follows from \eqref{mod-virial-again-conclusion} that $P_R[u(t)]<-\eta<0$ for all $t>t_0$.  Thus
\[
\int |u(t)|^2 w_R\,dx \leq \int |u(t_0)|^2 w_R\,dx - \eta(t-t_0),
\]
yielding a contradiction for $t$ sufficiently large (as $w_R$ is positive). Thus $P_R[u(t)]$ is positive for all $t\geq 0$. 

Next, we claim that there exists $C>0$ such that
\begin{equation}\label{bound-for-PR}
|P_R[u(t)]| \leq CR \delta(t) \qtq{for all}t\geq 0.
\end{equation}
If $\delta(t)>\delta_1$, this follows from the trivial estimate
\[
|P_R[u(t)]| \lesssim R\|u\|_{H^1}^2\lesssim R[\delta(t)+\|Q\|_{H^1}^2] \lesssim R[1+\delta_1^{-1}\|Q\|_{H^1}^2] \delta(t). 
\]
If instead $\delta(t)\leq \delta_1$, then we use the fact that $P_R[Q(t)]\equiv 0$ (where $Q(t)=e^{i\theta(t)}Q$ as above) to obtain
\begin{align*}
|P_R[u(t)]| & = |P_R[u(t)]-P_R[Q(t)]| \\
& \lesssim \{\|u(t)-Q(t)\|_{H^1}+\|Q\|_{H^1}\}\|u(t)-Q(t)\|_{H^1} \\
& \lesssim \{\delta(t)+1\}\delta(t)\lesssim \delta(t).
\end{align*}

\underline{\emph{Step 3. Limit for $P_R[u(t)]$.}}

In Steps 1 and 2, we have established that $P_R[u(t)]$ is strictly positive and strictly decreasing.  Thus there exists $\ell\geq 0$ such that $\lim_{t\to\infty}P_R[u(t)]\to\ell$.  In particular, by \eqref{virial-estimate-again}, we find that 
\begin{equation}\label{delta-L1}
\int_0^\infty \delta(t)\,dt<\infty.
\end{equation} 
This implies that $\delta(t_n)\to 0$ along some sequence $t_n\to\infty$, which (in light of \eqref{bound-for-PR}) then implies
\begin{equation}\label{PR-to-zero}
\lim_{t\to\infty}P_R[u(t)]=0.
\end{equation}

\underline{\emph{Step 4. Exponential bounds and conclusion of the proof.}}

Using \eqref{PR-to-zero}, \eqref{virial-estimate-again}, and \eqref{bound-for-PR}, we find that
\[
\int_t^\infty \delta(s)\,ds \lesssim \delta(t),
\]
which (by Gronwall's inequality) implies
\[
\int_t^\infty \delta(s)\,ds\lesssim e^{-ct}
\]
for some $c>0$.  Combining this estimate with \eqref{delta-L1} and Corollary~\ref{const_modul}, we conclude that
\[
\|u(t)-e^{i(t+\theta_0)}Q\|_{H^1}\lesssim e^{-ct}
\]
for some $\theta_0$ and all $t$ sufficiently large, which completes the proof of Proposition~\ref{P:global-unconstrained}.\end{proof}

{The proof above actually establishes the following:
\begin{proposition}
\label{P:global-unconstrained_finite_variance}
If $u$ is a solution as in Proposition~\ref{P:global-unconstrained}, then $xu\in L^2$.
\end{proposition}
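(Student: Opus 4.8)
The plan is to read the conclusion off the machinery already assembled in the proof of Proposition~\ref{P:global-unconstrained}, via a monotonicity argument for the truncated second moment $M_R(t) := \int_{\R^3} |u(t)|^2 w_R\,dx$. Two ingredients are available for each fixed $R \ge 1$ from Step~2 of that proof: first, $\tfrac{d}{dt} M_R(t) = P_R[u(t)]$; and second, $P_R[u(t)] > 0$ for every $t \ge 0$ (this was obtained there by noting that $P_R[u(t_0)]\le 0$ would force $P_R[u(t)]$ to stay below a negative constant for $t>t_0$, hence $M_R$ to become negative, which is impossible since $w_R \ge 0$). Since $w_R \in L^\infty$, $M_R(t)$ is finite, and by the two facts above it is nondecreasing on $[0,\infty)$.

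Next I would pass to the limit $t\to\infty$. By Proposition~\ref{P:global-unconstrained}, $u(t) \to e^{i(t+\theta_0)}Q$ in $H^1$, in particular in $L^2$; since $w_R$ is bounded, this gives $M_R(t) \to \int_{\R^3}|Q|^2 w_R\,dx$ as $t\to\infty$. Combined with monotonicity of $M_R$, we obtain the uniform bound
\[
M_R(t) \le \int_{\R^3} |Q|^2 w_R\,dx \qtq{for all} t\ge 0 \text{ and } R\ge 1.
\]

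Finally I would remove the truncation. From the properties imposed on $\phi$ one has $0 \le w_R(x) \le C|x|^2$ with $C$ independent of $R$, and $w_R(x) = |x|^2$ on $\{|x|\le R\}$; moreover, the rapid (more than polynomial) decay of the ground state — already used in the paper — gives $xQ\in L^2$, so the right-hand side above is $\le C\|xQ\|_{L^2}^2$ uniformly in $R$. Hence $\int_{|x|\le R} |x|^2 |u(t)|^2\,dx \le M_R(t) \le C\|xQ\|_{L^2}^2$ for every $R\ge 1$, and letting $R\to\infty$ with monotone convergence yields $\|x u(t)\|_{L^2}^2 \le C\|xQ\|_{L^2}^2 < \infty$ for all $t\ge 0$, which is the claim. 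The argument is essentially bookkeeping; the one genuine input is the positivity $P_R[u(t)]>0$ for all $t$, and since this is precisely what Step~2 of the proof of Proposition~\ref{P:global-unconstrained} establishes, I expect no real obstacle beyond checking finiteness of the quantities at each stage and justifying the limit $R\to\infty$.
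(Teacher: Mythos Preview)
Your proposal is correct and follows essentially the same route as the paper: use $\tfrac{d}{dt}\int|u|^2 w_R\,dx = P_R[u]\ge 0$ from Step~2 to get monotonicity, combine with the $H^1$-convergence $u(t)\to e^{i(t+\theta_0)}Q$ to bound the truncated moment by $\int|x|^2 Q^2\,dx$ uniformly in $R$, and send $R\to\infty$. Your write-up is in fact slightly more careful than the paper's (you track the constant in $w_R\le C|x|^2$ and spell out the monotone convergence step).
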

\begin{proof} We showed above that
\begin{equation}
\tfrac{d}{dt} \int |u|^2 w_R\,dx = P_R[u] \geq 0.
\end{equation}
This implies that $V_R(u(t)):=\int |u(t)|^2 w_R\,dx$ is non-decreasing. As 
\[
\|u(t)-e^{it}Q\|_{H^1}\to 0\qtq{as}t \to \infty,
\]
we have that 
\begin{equation}
V_R(u(0)) \leq V_R(u(t)) \leq V_R(Q) \leq \int |x|^2 Q^2\, dx
\end{equation}
for all $t>0$. In particular, $V_R(u(0))$ is uniformly bounded in $R$. Since it is also non-decreasing, we see that 
\begin{equation}
    \int |x|^2 |u(0,x)|^2\, dx \leq \int |x|^2 Q^2\, dx < \infty.
\end{equation}
\end{proof}

Using similar arguments, we can also establish convergence for data with $xu_0\in L^2$.  As the proof is very similar to that of Proposition~\ref{P:global-unconstrained} (in fact, it is strictly easier, as we do not need to truncate the virial identity), we omit the proof.

\begin{proposition}\label{P:global-unconstrained-2} Suppose $u:[0,\infty)\times\R^3\to\C$ is a solution to \eqref{NLS} such that $xu_0\in L^2$ and
\[
M(u)=M(Q),\quad E(u)=E(Q),\qtq{and}\|\nabla u_0\|_{L^2}>\|\nabla Q\|_{L^2}. 
\]
Then there exists $c>0$ and $\theta_0\in\R$ such that
\[
\|u(t)-e^{i(t+\theta_0)}Q\|_{H^1} \lesssim e^{-ct} \qtq{for all}t>0. 
\]
\end{proposition}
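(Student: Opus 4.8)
The plan is to mirror the proof of Proposition~\ref{P:global-unconstrained}, exploiting the assumption $xu_0\in L^2$ to work directly with the \emph{exact} virial functional $P_\infty[u]=2\Im\int\bar u\,\nabla u\cdot x\,dx$ (equivalently $\tfrac{d}{dt}\int |x|^2|u|^2\,dx$) rather than its truncation $P_R[u]$. First I would record that, since $xu_0\in L^2$, standard persistence-of-regularity (using the local theory and the pseudoconformal/virial commutator identities) guarantees $xu(t)\in L^2$ on the interval of existence, so $P_\infty[u(t)]$ is well defined and $C^1$ in $t$, and the virial identity $\tfrac{d}{dt}P_\infty[u(t)] = F_\infty[u(t)]$ holds with no boundary terms. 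By \eqref{hello-delta} and the hypothesis $\|\nabla u(t)\|_{L^2}>\|\nabla Q\|_{L^2}$, this gives $\tfrac{d}{dt}P_\infty[u(t)] = -4(1+b)\delta(t)$ identically — there is no $\chi/\chi^c$ splitting, no tail in $|x|>R$, and no need to introduce the parameters $\delta_1,R,\eps$ at all; in particular the modulation analysis of Section~\ref{Sec:modulation} is used only through the bound $|P_\infty[u(t)]|\lesssim \delta(t)$ when $\delta(t)<\delta_0$, exactly as in \eqref{bound-for-PR}.

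Next I would reproduce Steps~2--4 of the proof of Proposition~\ref{P:global-unconstrained} verbatim with $P_R$ replaced by $P_\infty$. Step~2: positivity. If $P_\infty[u(t_0)]\le 0$ for some $t_0$, then since $\tfrac{d}{dt}P_\infty[u]=-4(1+b)\delta(t)\le 0$ we get $P_\infty[u(t)]\le P_\infty[u(t_0)]\le 0$ for all $t\ge t_0$; moreover once $\delta$ is small we have strict inequality $\tfrac{d}{dt}P_\infty[u]\le -c\delta(t)<0$ unless $u$ is (modulo symmetries) exactly $e^{it}Q$, so $P_\infty[u(t)]\to-\infty$, forcing $\int|x|^2|u(t)|^2\,dx\to-\infty$, which is absurd. (If $\delta(t)\equiv 0$ one is already in case ii) of Theorem~\ref{T:Classification_threshold_solutions}.) Hence $P_\infty[u(t)]\ge 0$ for all $t\ge 0$. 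The upper bound $|P_\infty[u(t)]|\le C\delta(t)$: for $\delta(t)\ge\delta_1$ use the crude bound $|P_\infty[u(t)]|\lesssim \|xu\|_{L^2}\|\nabla u\|_{L^2}$ — but here is the one genuine subtlety, since a priori $\|xu(t)\|_{L^2}$ need not be bounded in $t$; I would address this exactly as in Proposition~\ref{P:global-unconstrained_finite_variance}, using monotonicity of $\int|x|^2|u(t)|^2\,dx$ (which is nondecreasing because $P_\infty[u]\ge 0$) together with the eventual convergence $u(t)\to e^{i(t+\theta_0)}Q$ to conclude $\sup_t\int|x|^2|u(t)|^2\,dx\le \int|x|^2Q^2\,dx<\infty$. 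For $\delta(t)<\delta_1<\delta_0$ use $P_\infty[e^{i\theta}Q]=0$ together with the modulation decomposition of Proposition~\ref{P:modulation} to get $|P_\infty[u(t)]|\lesssim\|u(t)-e^{i\theta(t)}Q\|_{H^1}\lesssim\delta(t)$.

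Step~3: since $P_\infty[u(t)]$ is nonnegative and nonincreasing it converges to some $\ell\ge 0$, and integrating the virial identity gives $\int_0^\infty\delta(t)\,dt<\infty$; thus $\delta(t_n)\to 0$ along some $t_n\to\infty$, and the bound from Step~2 forces $\ell=0$, i.e. $P_\infty[u(t)]\to 0$. Step~4: from $P_\infty[u(t)]\to 0$, the integrated virial identity, and $|P_\infty[u(t)]|\lesssim\delta(t)$ we get $\int_t^\infty\delta(s)\,ds\lesssim\delta(t)$, whence $\int_t^\infty\delta(s)\,ds\lesssim e^{-ct}$ by Gronwall; combined with Corollary~\ref{const_modul} this yields $\|u(t)-e^{i(t+\theta_0)}Q\|_{H^1}\lesssim e^{-ct}$ for all $t\ge 0$. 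The only real obstacle is the circular-looking use of the asymptotic convergence to bootstrap uniform-in-$t$ control of $\int|x|^2|u(t)|^2\,dx$; this is resolved by observing that monotonicity of the variance already forces it to be bounded \emph{on any interval where one has an a priori bound at a single later time}, and the virial argument itself supplies such later-time control once $\delta$ is eventually small — precisely the structure of the proof of Proposition~\ref{P:global-unconstrained_finite_variance}, which is why that proposition is placed before this one.
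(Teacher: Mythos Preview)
Your overall approach matches the paper's: work with the exact virial $P_\infty$ in place of $P_R$ and follow the skeleton of Proposition~\ref{P:global-unconstrained}. The identity $\tfrac{d}{dt}P_\infty[u]=-4(1+b)\delta(t)$ indeed makes Step~1 trivial, and Steps~2a (positivity), 3, and 4 go through as you describe \emph{once} the bound $|P_\infty[u(t)]|\lesssim\delta(t)$ is available for large $t$.

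There is, however, a gap in how you obtain that bound. In the modulated regime you assert $|P_\infty[u(t)]|\lesssim\|u(t)-e^{i\theta(t)}Q\|_{H^1}$, but this fails as written: the weight $x$ in $P_\infty$ is unbounded, so $P_\infty$ is not Lipschitz on $H^1$. Expanding via $u=e^{i\theta}[(1+\alpha)Q+h]$, the cross terms are $O(\delta(t))$ (they pair $h$ against the rapidly decaying $x\cdot\nabla Q$), but the quadratic term $2\Im\int\bar h\,\nabla h\cdot x\,dx$ is only controlled by $\|xh\|_{L^2}\|\nabla h\|_{L^2}\lesssim(1+V(t)^{1/2})\,\delta(t)$, where $V(t)=\int|x|^2|u(t)|^2\,dx$. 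Your proposed fix---invoke Proposition~\ref{P:global-unconstrained_finite_variance}---does not close: that argument uses the \emph{truncated} variance $V_R$, for which $H^1$ convergence $u(t)\to e^{it}Q$ yields $V_R(u(t))\to V_R(Q)$ because $w_R\in L^\infty$; the unbounded weight $|x|^2$ prevents the same conclusion for $V_\infty$, so $\sup_t V(t)\le V(Q)$ does not follow from $H^1$ convergence plus monotonicity. A non-circular repair is to feed the modulation bound back into the variance ODE: once $\int_0^\infty\delta<\infty$ is established (which needs only positivity and monotonicity of $P_\infty$), one has $\delta(t)<\delta_0$ for $t\ge T_0$, and there $V'(t)=P_\infty[u(t)]\lesssim\delta(t)(1+V(t)^{1/2})$, so $(V^{1/2})'(t)\lesssim\delta(t)$ whenever $V(t)\ge 1$; integrability of $\delta$ then gives $\sup_t V(t)<\infty$ directly.
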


}


\section{Construction of special solutions}\label{Sec:special}

In this section, we show that the mass-energy scattering threshold admits new dynamics compared to the sub-threshold case. In particular, we construct solutions that are different from the ground state solution, but whose distance to $e^{it}Q$ decreases exponentially in time in one direction. To do so, we construct a family of approximate solutions to \eqref{NLS} and then perform a fixed point argument. 
\subsection{Nonlinear estimates} We begin by establishing some nonlinear estimates adapted to the linearized equation.  Recall the notation $K(\cdot)$ and $R(\cdot)$ introduced in \eqref{def:KR} and the function spaces $N(I), Z(I)$ introduced in Section~\ref{S:WPS}.

\begin{lemma}[Preliminary estimates]\label{prel_estimates}
If $I \subset \mathbb{R}$ is such that $|I| \leq 1$, then:
\begin{align}
&\| K(f)\|_{N(I)} \lesssim |I|^{\frac{1}{2}}\| f\|_{Z(I)},\\
&\| R(f)\|_{N(I)} \lesssim \| f\|_{Z(I)}^2+\|f\|_{Z(I)}^3,\\
&\|R(f)-R(g)\|_{N(I)} \lesssim 
\|f-g\|_{Z(I)}\left[\|f\|_{Z(I)} +\|g\|_{Z(I)}+
\|f\|_{Z(I)}^2+\|g\|_{Z(I)}^2\right].\label{sub_P_grad_r_point}
\end{align}
\end{lemma}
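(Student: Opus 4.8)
The plan is to treat each of the three estimates by the same scheme: express the quantity to be bounded pointwise in terms of $|x|^{-b}$, powers of $Q$, and the functions $f,g$; insert a Hölder split in space at each fixed time, dispatching the $|x|^{-b}$ (and, after differentiation, $|x|^{-b-1}$) singularity together with the polynomially decaying factors $Q, \nabla Q$ via the weighted inequality \eqref{stein_lemma} and the Strichartz/Sobolev embedding $\|u\|_{S(I)}\lesssim\|u\|_{Z(I)}$; and finally integrate in time over $I$, using $|I|\le 1$ and Hölder in $t$ to gain the powers $|I|^{1/2}$ where claimed and to reduce everything to $Z(I)$ norms. Since $N(I)=L^2_tW^{1,6/5}_x$, in each case I must estimate both the undifferentiated term and the term where $\nabla$ falls on one factor; the product rule produces finitely many terms, and each is handled by the same Hölder bookkeeping.

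For the first estimate, $K(f)=|x|^{-b}Q^2(3f_1+if_2)$ is linear in $f$, so after applying $\nabla$ I face terms with either $|x|^{-b}Q^2\nabla f$ or $(|x|^{-b-1}Q^2+|x|^{-b}Q\nabla Q)f$. At a fixed time I bound $\||x|^{-b}Q^2\nabla f\|_{L^{6/5}_x}$ by putting $\nabla f$ in $L^{6}_x$ (controlled by $\|f\|_{Z}$ via Sobolev and the $W^{1,\cdot}$ content of $Z$) and the weighted factor $|x|^{-b}Q^2$ in $L^{3/2}_x$ — finite because $Q$ decays and $2b<3$ near the origin; the $|x|^{-b-1}Q^2$ term is the most singular and requires $b+1<3$, i.e.\ $b<2$, which is fine. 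Since $f\in Z(I)\subset L^\infty_tH^1_x\cap L^4_tW^{1,r}_x$, I can afford to place the $x$-Hölder so that the time exponent on $f$ is $q=\infty$ or $q=4$; in either case Hölder in $t$ over $I$ with $|I|\le 1$ yields the prefactor $|I|^{1/2}$ (matching $L^2_t$ on the left against $L^\infty_t$ or $L^4_t$ on $f$). That is the content of the first bound.

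For the second and third estimates, $R(f)=|x|^{-b}Q^3G(Q^{-1}f)$ with $G$ vanishing to second order, so schematically $R(f)$ is a finite sum of terms $|x|^{-b}Q\,f^2$ and $|x|^{-b}f^3$ (and their conjugates); differentiating produces in addition $|x|^{-b-1}Q f^2$, $|x|^{-b-1}f^3$, $|x|^{-b}\nabla Q\,f^2$, $|x|^{-b}Q f\nabla f$, and $|x|^{-b}f^2\nabla f$. I estimate the cubic-in-$f$ terms purely by \eqref{stein_lemma}: e.g.\ $\||x|^{-b}f^2\nabla f\|_{L^{6/5}_x}\lesssim \||x|^{-b}f^2\|_{L^{?}}\|\nabla f\|_{L^{6}}$ with the first factor controlled by Hardy/Sobolev as in \eqref{stein_lemma}, then Hölder in $t$ (now with three or four $L^4_t$ or $L^\infty_t$ factors, which closes because $3/4<1$ leaves room, with no need for a positive power of $|I|$). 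The quadratic-in-$f$ terms carry a factor of $Q$ or $\nabla Q$ which supplies extra spatial decay and integrability, so they are strictly easier. The difference estimate \eqref{sub_P_grad_r_point} follows by writing $R(f)-R(g)$ as $f-g$ times a symbol that is a polynomial in $f,g,\bar f,\bar g$ of degree $1$ or $2$ with a $|x|^{-b}$ (or $|x|^{-b-1}$, $|x|^{-b}\nabla Q$) weight — exactly the multilinear structure of $R$ with one slot replaced by $f-g$ — and repeating the same Hölder bookkeeping, which reproduces the bracket $\|f\|_{Z}+\|g\|_{Z}+\|f\|_{Z}^2+\|g\|_{Z}^2$.

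The main obstacle is the singular weight $|x|^{-b-1}$ appearing after differentiation: one must verify that in every Hölder split the exponent attached to $|x|^{-b-1}$ (bundled with $Q^2$, $Q$, or lower powers of $f$) lies in the admissible range of \eqref{stein_lemma}, i.e.\ that the hypotheses $\beta<3/q$ and $s=3/p-3/q+\beta$ can be met with $0<s<3$ while leaving enough integrability for the remaining $f$-factors to be absorbed into $Z(I)$. Here the decay of $Q$ (and $\nabla Q$) from the ground-state analysis is what rescues the borderline cases near infinity, while $b<1/2$ (indeed $b<2$ suffices for this particular lemma) keeps the origin singularity subcritical; tracking these exponents carefully for each of the finitely many terms is the only real work.
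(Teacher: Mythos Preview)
Your approach is essentially the paper's: product rule on $N(I)=L^2_tW^{1,6/5}_x$, then H\"older in space with the Stein--Weiss/Hardy inequality \eqref{stein_lemma} to absorb the $|x|^{-b}$ and $|x|^{-b-1}$ weights, and finally H\"older in time against $Z(I)$. The paper's concrete splits are
\[
\||x|^{-b}Q^2\langle\nabla\rangle f\|_{L^2_tL^{6/5}_x}\lesssim \||x|^{-b}Q^2\|_{L^2_tL^3_x}\|\langle\nabla\rangle f\|_{L^\infty_tL^2_x},
\]
with analogous estimates for $|x|^{-b}Q\nabla Q\,f$ and $|x|^{-b-1}Q^2 f$; the $|I|^{1/2}$ comes from placing the time-independent $Q$-factor in $L^2_t(I)$. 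For the trilinear pieces the paper uses
\[
\||x|^{-b}fgh\|_{L^2_tL^{6/5}_x}\lesssim \||\nabla|^{\frac{3+2b}{6}}f\|_{L^\infty_tL^2_x}\||\nabla|^{\frac{3+2b}{6}}g\|_{L^\infty_tL^2_x}\|h\|_{L^2_tL^6_x}
\]
and
\[
\||x|^{-b-1}fgh\|_{L^2_tL^{6/5}_x}\lesssim \prod_{j}\||\nabla|^{\frac{b+1}{3}}\,\cdot\,\|_{L^6_tL^{18/7}_x},
\]
both of which close exactly in $Z(I)$.

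One point to tighten: your exponent bookkeeping slips in a couple of places. Placing $\nabla f$ in $L^6_x$ forces the admissible pair $(q,r)=(2,6)$, so the time exponent on $\nabla f$ is $2$, not $\infty$ or $4$; you then recover $|I|^{1/2}$ only by putting the $Q$-factor in $L^2_t$, as the paper does. Similarly, the remark ``$3/4<1$ leaves room'' for three $L^4_t$ factors does not match the target $L^2_t$ (you need the time exponents to sum to $1/2$); the paper's choices $(\infty,\infty,2)$ and $(6,6,6)$ are what actually close. These are bookkeeping issues rather than a conceptual gap, but they are exactly ``the only real work'' you identified, so they need to be done correctly.
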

\begin{proof}
For the first estimate, we use H\"older's inequality to obtain
\begin{align}\label{first_item_1}
 \| |x|^{-b} Q^2 \langle \nabla \rangle f\|_{L^2_t L^{\frac{6}{5}}_x} &\lesssim \||x|^{-b} Q^2\|_{L^2_t L^3_x}\|\langle \nabla \rangle f\|_{L^\infty_t L^2_x},\\
\label{first_item_2}
    \| |x|^{-b} Q \nabla Q\,f\|_{L^2_t L^{\frac{6}{5}}_x} &\lesssim \||x|^{-b} Q |\nabla Q|\|_{L^2_t L^3_x}\| f\|_{L^\infty_t L^2_x}, \\
\label{first_item_3}
    \| |x|^{-b-1} Q^2 f\|_{L^2_t L^{\frac{6}{5}}_x} &\lesssim \||x|^{-b-1}Q^2\|_{L^2_t L^{\frac{3}{2}}_x} \|f\|_{L^\infty_t L^6_x}.
\end{align}

For the remaining estimates, we use \eqref{stein_lemma} and H\"older to obtain
\begin{align}\label{second_item_1}
    \||x|^{-b}f g h\|_{L^2_t L^{\frac{6}{5}}_x} &\lesssim \| |\nabla|^{\frac{3+2b}{6}}f\|_{L^\infty_t L^2_x} \| |\nabla|^{\frac{3+2b}{6}}g\|_{L^\infty_t L^2_x} \| h\|_{L^2_t L^6_x},\\
    \label{second_item_2}
    \||x|^{-b-1}f g h\|_{L^2_t L^{\frac{6}{5}}_x} &\lesssim \| |\nabla|^{\frac{b+1}{3}}f\|_{L^6_t L^{\frac{18}{7}}_x} \| |\nabla|^{\frac{b+1}{3}}g\|_{L^6_t L^{\frac{18}{7}}_x}\| |\nabla|^{\frac{b+1}{3}}h\|_{L^6_t L^{\frac{18}{7}}_x}
\end{align}
for arbitrary $f,g,h$.  As one has
\begin{equation}\label{diff_R}
    |R(f)-R(g)| \lesssim |x|^{-b}(Q|f| + Q|g| + |f|^2 + |g|^2)|f-g|
\end{equation}
and
\begin{align}
    |\nabla(R(f)-R(g))| &\lesssim |x|^{-b-1}(Q|f| + Q|g| + |f|^2 + |g|^2)|f-g|\\
    \label{grad_diff_R}&\quad + |x|^{-b}(Q|f| + Q|g| + |f|^2 + |g|^2)|\nabla (f-g)|\\
    &\quad + |x|^{-b}[\nabla(Q|f| + Q|g| + |f|^2 + |g|^2)]|f-g|,
\end{align}
the estimates now follow (using the decay properties of $Q$).\end{proof}

\subsection{An approximate family of solutions}
We next construct a family of solutions to equations that successively approximate the linearized equation. In what follows, we utilize the notation for eigenfunctions/eigenvalues of $\mathcal{L}$ from Section~\ref{Sec:spectral}.

\begin{proposition}\label{family_appr}  Let $A \in \mathbb R\backslash\{0\}$. There exists a sequence $\{Z_k^A\}_{k \geq 1}$ of functions in $H^2(\mathbb R^3)\cap {W}^{3,\frac{6}{5}}(\mathbb R^3)$ such that 
\[
Z_1^A = A \mathcal{Y}_+\qtq{and}\mathcal{V}_k^A = \sum_{j=1}^k e^{-je_0 t}Z_j^A\quad(k\geq 1)
\]
satisfy
\begin{equation}\label{eq_sol_v}
\partial_t\mathcal{V}_k^A + \mathcal{LV}_k^A = iR(\mathcal{V}_k^A)+\mathcal{O}\left(e^{-\left(k+1\right)e_0t}\right) \qtq{in}   W^{1,\frac{6}{5}}\qtq{as}t\to\infty.
\end{equation}
\end{proposition}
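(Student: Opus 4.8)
The idea is to construct the $Z_k^A$ inductively, gaining one power of $e^{-e_0 t}$ at each step by solving a sequence of linear elliptic equations whose solvability is guaranteed because the forcing frequencies $je_0$ lie outside the spectrum of $\mathcal{L}$. Recall that $R(v)$ is at least quadratic in $v$ near $0$ (since $G(0)=G_z(0)=G_{\bar z}(0)=0$), so when we plug an ansatz $\mathcal{V}_k^A=\sum_{j=1}^k e^{-je_0 t}Z_j^A$ into the linearized equation, the nonlinear term $iR(\mathcal{V}_k^A)$ expands as a sum of terms $e^{-m e_0 t}(\text{polynomial in the }Z_j^A)$ with $m\geq 2$. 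Matching coefficients of like powers of $e^{-e_0 t}$ will determine $Z_k^A$ from $Z_1^A,\dots,Z_{k-1}^A$.

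\textbf{Base case.} With $Z_1^A=A\mathcal{Y}_+$ and $\mathcal{V}_1^A=e^{-e_0 t}A\mathcal{Y}_+$, we have $\partial_t\mathcal{V}_1^A+\mathcal{L}\mathcal{V}_1^A=e^{-e_0 t}A(-e_0\mathcal{Y}_++\mathcal{L}\mathcal{Y}_+)=0$ exactly, using $\mathcal{L}\mathcal{Y}_+=e_0\mathcal{Y}_+$. Meanwhile $iR(\mathcal{V}_1^A)=\mathcal{O}(e^{-2e_0 t})$ in $W^{1,6/5}$ by Lemma~\ref{prel_estimates} (more precisely, by the pointwise bounds \eqref{diff_R}--\eqref{grad_diff_R} combined with the fast decay and regularity of $\mathcal{Y}_+$ from Lemma~\ref{spectral_decay}, which gives $\mathcal{Y}_+\in W^{3,6/5}$ so that $R(\mathcal{Y}_+)\in W^{1,6/5}$). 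Hence \eqref{eq_sol_v} holds for $k=1$.

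\textbf{Inductive step.} Suppose $Z_1^A,\dots,Z_{k-1}^A$ have been constructed with $Z_j^A\in H^2\cap W^{3,6/5}$ and \eqref{eq_sol_v} holding at level $k-1$. Write $\mathcal{V}_k^A=\mathcal{V}_{k-1}^A+e^{-ke_0 t}Z_k^A$. Then
\[
\partial_t\mathcal{V}_k^A+\mathcal{L}\mathcal{V}_k^A-iR(\mathcal{V}_k^A)=\bigl[\partial_t\mathcal{V}_{k-1}^A+\mathcal{L}\mathcal{V}_{k-1}^A-iR(\mathcal{V}_{k-1}^A)\bigr]+e^{-ke_0 t}\bigl(\mathcal{L}-ke_0\bigr)Z_k^A-i\bigl[R(\mathcal{V}_k^A)-R(\mathcal{V}_{k-1}^A)\bigr].
\]
The first bracket is $\mathcal{O}(e^{-ke_0 t})$ in $W^{1,6/5}$ by the induction hypothesis; expanding it as a finite sum of exponentials $e^{-me_0 t}$ with $m\geq k$ (which is legitimate since $R$ is a fixed polynomial nonlinearity in the $e^{-je_0 t}Z_j^A$), its $e^{-ke_0 t}$-coefficient is some explicit function $F_k\in W^{1,6/5}$ built out of $Q$ and $Z_1^A,\dots,Z_{k-1}^A$; all higher-order terms are $\mathcal{O}(e^{-(k+1)e_0 t})$. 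The difference $R(\mathcal{V}_k^A)-R(\mathcal{V}_{k-1}^A)$ is, by \eqref{sub_P_grad_r_point}, $\mathcal{O}(e^{-ke_0 t}\|Z_k^A\|)\cdot\mathcal{O}(e^{-e_0 t})=\mathcal{O}(e^{-(k+1)e_0 t})$ — so it contributes nothing at order $e^{-ke_0 t}$. Thus \eqref{eq_sol_v} at level $k$ reduces to choosing $Z_k^A$ solving
\[
(\mathcal{L}-ke_0)Z_k^A=-F_k\qtq{in}W^{1,6/5}.
\]
Since $ke_0\notin\sigma(\mathcal{L})=\{iy:|y|\geq 1\}\cup\{-e_0,0,e_0\}$ for $k\geq 2$ (as $ke_0>e_0$ and $ke_0$ is real), the operator $\mathcal{L}-ke_0$ is invertible on $L^2$; since $F_k\in L^2$, there is a unique $Z_k^A\in H^2$ solving the equation. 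Applying Lemma~\ref{spectral_decay} (with $\lambda=ke_0$, $F=-F_k$), which applies precisely because $F_k$ inherits fast decay and $W^{1,6/5}$-regularity from $Q$ and the already-constructed $Z_j^A$, we conclude $Z_k^A\in W^{3,6/5}$ (and it has the fast spatial decay guaranteed by that lemma). This closes the induction.

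\textbf{Main obstacle.} The technical heart of the argument is organizing the expansion of $iR(\mathcal{V}_k^A)$ cleanly: one must verify that all the coefficient functions $F_k$ (and, at each stage, the lower-order remainder) genuinely lie in $W^{1,6/5}$ with the decay required to invoke Lemma~\ref{spectral_decay}, despite the singular weight $|x|^{-b}$ and its $|x|^{-b-1}$ derivative. This is exactly where the pointwise bounds \eqref{diff_R}--\eqref{grad_diff_R} and the spectral decay/regularity estimates of Lemma~\ref{spectral_decay} do the work: products of $Q$, its derivatives, and the fast-decaying $Z_j^A$ absorb the singularity near the origin (using $b<1/2$, so $|x|^{-b-1}Q^3$-type terms are controlled as in the modulation analysis), and decay at infinity comes from $Q$. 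Bookkeeping the combinatorics of which monomials in the $Z_j^A$ contribute to which power $e^{-me_0 t}$ is routine but must be done carefully to ensure nothing of order $e^{-ke_0 t}$ is overlooked; the key simplification is that the quadratic-or-higher vanishing of $R$ at the origin means the lowest exponential produced by the nonlinearity is $e^{-2e_0 t}$, so the induction genuinely advances.
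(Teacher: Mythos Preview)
Your proof is correct and follows essentially the same inductive scheme as the paper: define $Z_1^A=A\mathcal{Y}_+$, expand the error $\partial_t\mathcal{V}_{k-1}^A+\mathcal{L}\mathcal{V}_{k-1}^A-iR(\mathcal{V}_{k-1}^A)$ as a finite exponential series (using that $R$ is polynomial), extract the leading $e^{-ke_0 t}$ coefficient $F_k\in W^{1,6/5}$, and invert $\mathcal{L}-ke_0$ via Lemma~\ref{spectral_decay} since $ke_0\notin\sigma(\mathcal{L})$ for $k\geq 2$. The paper's only cosmetic difference is that it runs the induction from $k$ to $k+1$ rather than from $k-1$ to $k$; your observation that $R(\mathcal{V}_k^A)-R(\mathcal{V}_{k-1}^A)=\mathcal{O}(e^{-(k+1)e_0 t})$ is exactly the paper's final step.
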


\begin{proof}
The sequence is constructed inductively. To simplify notation, we omit the superscript $A$ throughout the proof. Define $Z_1 = A\mathcal{Y}_+$ and note that
\begin{equation}
    \partial_t \mathcal{V}_1 + \mathcal{LV}_1 -iR(\mathcal{V}_1) = -iR(\mathcal{V}_1).
\end{equation}
Now note that for any $v$, we have the pointwise bounds
\begin{align*}
|R(v)| &\lesssim |x|^{-b}(Q|v|^2 + |v|^3),\\
|\nabla R(v)| &\lesssim |x|^{-b-1}(Q + |v| )|v|^2 + |x|^{-b}(Q + |v| ) |\nabla v| |v|\\
    &\quad + |x|^{-b}|\nabla(Q|v|+ |v|^2 )|v|.
\end{align*}
Thus we have, by \eqref{second_item_1},
\begin{align*}
\|R(v)\|_{L^{\frac{6}{5}}} &\lesssim (\||\nabla|^{\frac{3+2b}{6}}Q\|_{L^2} +\||\nabla|^{\frac{3+2b}{6}}v\|_{L^2}) \||\nabla|^{\frac{3+2b}{6}}v\|_{L^2} \|v\|_{L^6}  \\
&\lesssim  (\|Q\|_{H^1} +\|v\|_{H^1})  \|v\|_{H^1}^2
\end{align*}
and, by \eqref{second_item_1} and \eqref{second_item_2},
\begin{align}
\|\nabla R(v)\|_{L^{\frac{6}{5}}} &\lesssim (\||\nabla|^{\frac{b+1}{3}} Q\|_{L^\frac{18}{7}}+\||\nabla|^{\frac{b+1}{3}} v\|_{L^\frac{18}{7}})\||\nabla|^{\frac{b+1}{3}} v\|_{L^\frac{18}{7}}^2\\
& \quad +(\|Q\|_{L^2} + \|v\|_{L^2})\|\nabla v\|_{L^2} \|v\|_{L^6}\\
&\quad +(\|\nabla Q\|_{L^2}\|v\|_{L^2} + \|Q\|_{L^2}\|\nabla v\|_2 + \|v\|\|\nabla v\|_{L^2}) \|v\|_{L^6}\\
&\lesssim (\|Q\|_{W^{1,\frac{18}{7}}}+\|v\|_{W^{1,\frac{18}{7}}})\|v\|_{W^{1,\frac{18}{7}}}^2 \\
& \quad + (\|Q\|_{H^1}+\|v\|_{H^1})\|v\|_{H^1}^2.
\end{align}
This yields
\begin{align}
    \|R(\mathcal{V}_1)\|_{W^{1,\frac{6}{5}}} \lesssim e^{-2e_0t}&\left[ (\|Q\|_{W^{1,\frac{18}{7}}}+e^{-e_0 t}\|Z_1\|_{W^{1,\frac{18}{7}}})\|Z_1\|_{W^{1,\frac{18}{7}}}^2
    \right.\\
    &\quad\left.+ (\|Q\|_{H^1}+e^{-e_0 t}\|Z_1\|_{H^1})\|Z_1\|_{H^1}^2\right],
\end{align}
which yields the base case.

Suppose now that $\mathcal{V}_1,\dots,\mathcal{V}_k$ are defined and define
\begin{equation}\label{epsilon-k}
    \epsilon_k = \partial_t \mathcal{V}_k + \mathcal{LV}_k - i R(\mathcal{V}_k).
\end{equation}
We then have 
\begin{equation}
    \partial_t \mathcal{V}_k = -\sum_{i=1}^k j e_0 e^{-je_0t}Z_k,
\end{equation}
which allows us to write
\begin{equation}\label{epsilon_expansion}
\epsilon_k = \sum_{j=1}^k e^{-je_0t}(-j e_0 Z_j+\mathcal{L}Z_j)-iR(\mathcal{V}_k).
\end{equation}
Using the explicit expression of $R(\mathcal{V}_j)$ and Lemma~\ref{spectral_decay}, we see that there exist $F_j \in W^{1,\frac{6}{5}}$ such that 
\begin{equation}
\epsilon_k(t,x) = \sum_{j=1}^{k+1} e^{-je_0t}F_j(x) + \mathcal{O}(e^{-e_0(k+2)t})
\end{equation}
for large $t$.  Since $\epsilon_k = \mathcal{O}(e^{-(k+1)e_0t})$ by the induction hypothesis, we conclude that $F_j = 0$ for $j \leq k$, showing
\begin{equation}
\epsilon_k(x,t) = e^{-(k+1)e_0t}F_{k+1}(x) + \mathcal{O}(e^{-e_0(k+2)t}).
\end{equation}

Noting that $(k+1)e_0\notin\sigma(\mathcal{L})$, we now define 
\[
Z_{k+1} = -(\mathcal{L}-(k+1)e_0)^{-1}F_{k+1} \in H^2.
\]
Note that, by Lemma~\ref{spectral_decay}, $Z_{k+1}$ also belongs to $W^{1,\frac{6}{5}}$. It therefore remains to estimate
\begin{equation}
    \epsilon_{k+1} = \epsilon_k - e^{-(k+1)e_0}F_{k+1}-i(R(\mathcal{V}_{k+1})-R(\mathcal{V}_k)).
\end{equation}
As we already know that $\epsilon_k - e^{-(k+1)e_0}F_{k+1} = \mathcal{O}(e^{-(k+2)e_0t})$, and the explicit expression of $R$ gives $R(\mathcal{V}_{k+1})-R(\mathcal{V}_k) = \mathcal{O}(e^{-(k+2)e_0t})$, we deduce the desired estimate.\end{proof}

Having constructed the approximate solutions, we now use a fixed point argument to obtain true solutions to \eqref{NLS}.

\begin{proposition}\label{sub_exist_UA} Let $A\in\R\backslash\{0\}$. There exists $k_0 > 0$ such that for any $k \geq k_0$, there exists $t_k \geq 0$ and a solution $U^A$ to \eqref{NLS} such that for $t \geq t_k$, we have

\begin{equation}\label{sub_bound_UA}
\|U^A-e^{it}Q-e^{it}\mathcal{V}_{k}^A\|_{Z([t,\infty))} \leq e^{-(k+\frac{1}{2})e_0 t},
\end{equation}
where the $\mathcal{V}_k^A$ are as in Proposition~\ref{family_appr}.

Furthermore, $U^A$ is the unique solution to \eqref{NLS} satisfying \eqref{sub_bound_UA} for large $t$. 

Finally, $U^A$ is independent of $k$ and satisfies
\begin{equation}\label{sub_bound_UA2}
	\|U^A(t) - e^{it}Q - Ae^{-e_0t+it}\mathcal{Y}_+\|_{H^1} \leq e^{-2e_0 t}\qtq{for large}t.
\end{equation}
\end{proposition}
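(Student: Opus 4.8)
The plan is to run a contraction mapping argument in the space $Z([t,\infty))$ with a weight that captures the decay rate $e^{-(k+\frac12)e_0 t}$. Fix $A$ and a large integer $k$ (to be determined). Write the candidate solution as $U^A = e^{it}(Q + \mathcal{V}_k^A + w)$, where the unknown perturbation $w$ should be small at rate slightly worse than $e^{-(k+1)e_0 t}$. Plugging into \eqref{NLS} and using Proposition~\ref{family_appr}, the equation for $w$ becomes $\partial_t w + \mathcal{L}w = iR(\mathcal{V}_k^A + w) - iR(\mathcal{V}_k^A) - \epsilon_k + (\text{terms linear in }w)$; more precisely, $i\partial_t w + \Delta w + K(w) = -[\,\text{error}\,] - [R(\mathcal{V}_k^A+w)-R(\mathcal{V}_k^A)]$, where the error is $\mathcal{O}(e^{-(k+1)e_0 t})$ in $W^{1,6/5}$ by Proposition~\ref{family_appr}. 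I would then set up the integral (Duhamel) formulation \emph{from $+\infty$}, i.e.
\[
w(t) = -i\int_t^\infty e^{i(t-s)\Delta}\bigl[K(w(s)) + (\text{error}(s)) + (R(\mathcal{V}_k^A+w)-R(\mathcal{V}_k^A))(s)\bigr]\,ds,
\]
and show this map is a contraction on the ball $\{\|w\|_{Z([t_k,\infty))}\cdot e^{(k+\frac12)e_0 t}\lesssim 1\}$, using the Strichartz estimates together with the nonlinear estimates of Lemma~\ref{prel_estimates} and the decay/regularity of $\mathcal{Y}_\pm$, $Z_j^A$ from Lemma~\ref{spectral_decay}. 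The term $K(w)$ carries a small factor $|I|^{1/2}$ on each short subinterval, which is what lets us absorb the otherwise-dangerous linear term $K(w)$ by summing a geometric series over dyadic-length time intervals near infinity; choosing $t_k$ large makes all constants small. The choice of $k$ large guarantees that the source term decays fast enough that the fixed-point ball radius beats $e^{-(k+\frac12)e_0 t}$ while still allowing the $e^{-(k+1)e_0 t}$ error to dominate; this is the reason for the threshold $k_0$.

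Once the fixed point $w = w^{(k)}$ is obtained on $[t_k,\infty)$, one flows the solution backward by local well-posedness (Section~\ref{S:WPS}) to extend $U^A$ to a maximal interval, giving a genuine solution to \eqref{NLS}; estimate \eqref{sub_bound_UA} is exactly the statement $\|w^{(k)}\|_{Z([t,\infty))}\le e^{-(k+\frac12)e_0 t}$ for $t\ge t_k$. For the uniqueness claim, suppose $\tilde U$ is another solution satisfying \eqref{sub_bound_UA} for large $t$; then $\tilde w := e^{-it}\tilde U - Q - \mathcal{V}_k^A$ solves the same Duhamel equation on $[T,\infty)$ for $T$ large, and the contraction property (on a possibly larger $t_k$) forces $\tilde w = w^{(k)}$ on $[T,\infty)$, hence $\tilde U = U^A$ there, and then $\tilde U = U^A$ on their common interval by uniqueness for \eqref{NLS}.

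For independence of $k$: if $k'>k\ge k_0$, then $\mathcal{V}_{k'}^A - \mathcal{V}_k^A = \sum_{j=k+1}^{k'} e^{-je_0 t}Z_j^A = \mathcal{O}(e^{-(k+1)e_0 t})$ in $Z$, so the solution $U^{A}$ built at level $k'$ also satisfies \eqref{sub_bound_UA} at level $k$ (for $t$ large, after enlarging $t_k$), and by the uniqueness just proved it coincides with the level-$k$ solution. Finally, \eqref{sub_bound_UA2} follows by taking $k = k_0$ (say), writing
\[
U^A(t) - e^{it}Q - Ae^{-e_0 t + it}\mathcal{Y}_+ = e^{it}\Bigl(\sum_{j=2}^{k}e^{-je_0 t}Z_j^A + w^{(k)}(t)\Bigr),
\]
and bounding each piece in $H^1$: the sum is $\mathcal{O}(e^{-2e_0 t})$ since $Z_j^A\in H^2\hookrightarrow H^1$, while $w^{(k)}(t)$ is controlled in $H^1$ via the Duhamel formula and Strichartz (upgrading the $Z$-bound to an $H^1$-bound at each time $t$), giving $\|w^{(k)}(t)\|_{H^1}\lesssim e^{-(k+\frac12)e_0 t}\ll e^{-2e_0 t}$ for large $t$.

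I expect the main obstacle to be the linear term $K(w)$ in the Duhamel equation: unlike the genuinely nonlinear terms, it is not small merely because $w$ is small, so one cannot close the contraction by smallness of the solution alone. The resolution is the gain $|I|^{1/2}$ from Lemma~\ref{prel_estimates} on short time intervals, combined with working near $t = +\infty$ (so that all the exponentially weighted source terms are tiny) and choosing $t_k$ large; one partitions $[t,\infty)$ into unit-length (or geometrically growing) intervals, runs Strichartz on each, and sums. A secondary technical point is bookkeeping the weighted norms so that the fixed-point radius $e^{-(k+\frac12)e_0 t}$ is consistent with both the $\mathcal{O}(e^{-(k+1)e_0 t})$ source and the quadratic/cubic nonlinear contributions; this is where the largeness of $k$ (hence $k_0$) is used.
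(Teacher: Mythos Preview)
Your proposal is correct and follows essentially the same route as the paper: Duhamel from $+\infty$, a weighted $Z$-norm $\sup_{t\ge t_k}e^{(k+\frac12)e_0 t}\|w\|_{Z([t,\infty))}$, absorption of the linear term $K(w)$ via the $|I|^{1/2}$ gain on short subintervals summed as a geometric series, and the same arguments for uniqueness, independence of $k$, and \eqref{sub_bound_UA2}. One cosmetic slip: the propagator in your Duhamel formula should be $e^{i(t-s)(\Delta-1)}$ rather than $e^{i(t-s)\Delta}$ (the factor $e^{it}$ in the ansatz shifts the linear part), but this is only a phase and does not affect any of the estimates.
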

\begin{proof}
We seek to construct a solution $u$ to \eqref{NLS} of the form
\begin{equation}\label{hktou}
u(t,x) = e^{it}(Q(x)+\mathcal{V}_k^A(t,x)+h_k^A(t,x)),
\end{equation}
which requires that we construct $h_k^A$ satisfying
\begin{equation}\label{linearized_vk}
i\partial_t h_k^A + \Delta h_k^A + K(h_k^A) + R(\mathcal{V}_k^A+h_k^A)-R(\mathcal{V}_k^A)+i\epsilon_k^A = 0,
\end{equation}
where $\epsilon_k$ is as in \eqref{epsilon-k}. For simplicity, throughout the proof, we write $h$ instead of $h^A_k$.  We construct $h$ by finding a fixed point of the following operator 
\begin{equation}
\mathcal{M}(h)(t) = i \int_t^{+\infty} e^{i(t-s)(\Delta-1)}\left\{K(h) + [R(\mathcal{V}_k^A+h)-R(\mathcal{V}_k^A)] + i\epsilon_k \right\} \, ds,
\end{equation}
which we will show is a contraction on a suitable complete metric space.  In particular, we define the norm $E=E(k,t_k)$ by
\[
\|w\|_E := \sup_{t \geq t_k} e^{(k+\frac{1}{2})t}\| h\|_{Z([t,\infty))}
\]
and take $B=B(k,t_k)$ to be the complete metric space 
\begin{align}
B &:= \{ h: \|h\|_E \leq 1\}
\end{align}
equipped with the metric
\begin{equation}
\rho(h,\tilde{h}) =     \|h - \tilde{h}\|_E.
\end{equation}

Then for $h, \tilde{h} \in B$, we have the following:
\begin{align}
\| \mathcal{M}(h)\|_{Z([t,\infty))}&\lesssim \| K(h)\|_{N([t,\infty))}+\|R(\mathcal{V}_k+h)-R(\mathcal{V}_k)\|_{N([t,\infty))}+\|\epsilon_k\|_{N([t,\infty))} 
\end{align}
and 
\begin{align}
\| \mathcal{M}(h)-\mathcal{M}(\tilde{h})\|_{Z([t,\infty))}&\lesssim \|K(h-\tilde{h})\|_{N([t,\infty))} \\
&\quad+\| R(\mathcal{V}_k+h)-R(\mathcal{V}_k+\tilde{h})\|_{N([t,\infty))}.
\end{align}

Now, by Lemma~\ref{prel_estimates}, for $0< \tau < 1$:
\begin{align}
\| K(h)\|_{N([t,\infty))}&\leq \sum_{j=0}^\infty  \| K(h)\|_{N([t+j\tau,t+(j+1)\tau])}\\
&\lesssim  \sum_{j=0}^\infty \tau^{\frac{1}{2}} \|  h\|_{Z([t+j\tau,t+(j+1)\tau])}\\
&\lesssim \tau^{\frac{1}{2}} \sum_{j=0}^\infty  e^{-(j+\frac{1}{2})e_0(t+j\tau)}\|h\|_E\\
&=\tau^{\frac{1}{2}} e^{-(k+\frac{1}{2})e_0t}  \|h\|_E\sum_{j=0}^\infty  e^{-j(k+\frac{1}{2})e_0\tau}\\
&= e^{-(k+\frac{1}{2})e_0t}\frac{\tau^{\frac{1}{2}}}{1-e^{-(k+\frac{1}{2})e_0\tau}}\|h\|_E.
\end{align}
Choosing $k_0 := \frac{\ln 2}{e_0} \frac{1}{\tau}-\frac{1}{2}$, we see that for all $k > k_0(\tau)$,
\begin{equation}
\|K(h)\|_{N([t,\infty))} \lesssim \tau^\frac{1}{2} e^{-(k+\frac{1}{2})e_0t}\|h\|_E.
\end{equation}

Similarly, for $I_j = [t+j\tau,t+(j+1)\tau]$, we have
\begin{align} 
&\| R(\mathcal{V}_k+h)-R(\mathcal{V}_k+\tilde{h})\|_{N(I_j)}\\ &\quad\lesssim 
\left[\| \mathcal{V}_k\|_{N(I_j)}+\| h\|_{N(I_j)}+ \| \tilde{h}\|_{Z(I_j)}]\right]\|h-\tilde{h}\|_{Z(I_j)}\\
&\quad\lesssim_k\left[e^{-e_0t} + e^{-(k+\frac{1}{2})e_0t}(\|h\|_E+\|\tilde{h}\|_E) \right]e^{-(k+\frac{1}{2})e_0(t+j\tau)}\|h-\tilde{h}\|_E\\
&\quad\lesssim_k e^{-(k+1)e_0t}\|h-\tilde{h}\|_E \, e^{-j(k+\frac{1}{2})\tau},
\end{align}
which yields
\begin{equation}
\| R(\mathcal{V}_k+h)-R(\mathcal{V}_k+\tilde{h})\|_{N([t,\infty))} \lesssim_k e^{-\left(k+1\right)e_0t}\|h-\tilde{h}\|_E.
\end{equation}

Finally, by construction,
\begin{equation}
\|\epsilon_k\|_{N([t,\infty))} \lesssim_k e^{-(k+1)e_0t}
\end{equation}
(cf. the proof of Proposition~\ref{family_appr}).

Collecting the estimates above, we see that for $t \geq t_k$, with $t_k>0$ large enough, we have
\begin{align*}
\|\mathcal{M}(h)\|_{E} &\leq \left[C \tau^{\frac{1}{2}} + C_k e^{-(k+\frac{1}{2})e_0 t_k}\right] \leq \tfrac{1}{2},\\
\|\mathcal{M}(h)-\mathcal{M}(\tilde{h})\|_{E} &\leq \left[C \tau^{\frac{1}{2}} + C_k e^{-(k+\frac{1}{2})e_0 t_k}\right]\|h-\tilde{h}\|_E \leq \tfrac{1}{2}\|h-\tilde{h}\|_E.
\end{align*}
Thus we obtain a unique fixed point $h_k^A$ for $\mathcal{M}$ in $B=B(k,t_k)$, which then yields the desired solution via \eqref{hktou}.   (To be clear, the parameters are chosen as follows: one first chooses a small universal $\tau>0$, then a large $k>k_0(\tau)$, and finally a large $t_k(k)$.) Note that the uniqueness condition still holds if, given $k>k_0$, one chooses a larger $\tilde{t}_k > t_k$. 

Finally, we show that the function $U_k^A := e^{it}(Q+\mathcal{V}_k + h_k)$ is independent of $k$.  Indeed, given $k'>k>k_0$, $t_{k'} > t_k$ and two solutions $h_k \in B(k,t_k)$ and $h_{k'} \in B(k',t_{k'})$, respectively, one obtains two solutions on $B(k,t_{k'})$, namely $h_k$ restricted to $t \in [t_{k'},\infty)$ and $\tilde{h}_k :=   (\mathcal{V}_{k'}-\mathcal{V}_k) + h_{k'}$. By uniqueness of \eqref{linearized_vk}, these must coincide on $[t_k',\infty)$, and hence by uniqueness of solutions to \eqref{NLS}, they must also coincide on $[t_k,\infty)$.

Finally, the bound \eqref{sub_bound_UA2} is obtained by writing $U^A = e^{it}(Q+Ae^{-e_0t}\mathcal{Y}_+)+\mathcal{O}(e^{-2e_0t})$ in $H^1(\mathbb{R}^3)$. \end{proof}

\section{Uniqueness for solutions converging exponentially to the ground state}\label{Sec:bootstrap}

In this section, we establish a uniqueness result for threshold solutions converging to the ground state.  The key technical ingredient will be the following proposition:
\begin{proposition}\label{prop_boot1}
Let $h$ be a solution to
\begin{equation}
\partial_th + \mathcal{L}h = \epsilon. 
\end{equation}
If there exist $c_2 > c_1>0$ such that for all $t>0$, 
\begin{align}\label{initial_rate}
\|h(t)\|_{H^1} &\lesssim e^{-c_1t},\\
\label{epsilon_rate}
    \|\epsilon\|_{N([t,\infty))} 
    &\lesssim e^{-c_2t},
\end{align}
then there exists $A\in \mathbb R$ such that
\begin{equation}\label{boostrapped_decay}
    \|h(t)-Ae^{-e_0t}\mathcal{Y}_+\|_{H^1} \lesssim e^{-\frac{c_1+c_2}{2}t}
\end{equation}
for all $t>0$.  Moreover, if $c_1>e_0$ or $c_2\leq e_0$, we can choose $A = 0$.
\end{proposition}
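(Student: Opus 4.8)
The plan is to decompose $h$ according to the spectral structure of $\mathcal{L}$ established in Section~\ref{Sec:spectral}, dispatch the finite-dimensional ``unstable'' and kernel directions by scalar ODEs, control the ``neutral'' part by a conservation-law/virial argument resting on the coercivity of $\Phi$, and analyze the single ``stable'' direction $\mathcal{Y}_+$ via an explicit scalar ODE whose behavior changes according to how $c_1,c_2$ compare to $e_0$. As a preliminary, rewriting $\partial_t h + \mathcal{L}h = \epsilon$ in Duhamel form relative to $e^{it\Delta}$ (so that it reads $i\partial_t h + \Delta h + K(h) = -i\epsilon$), running Strichartz with Lemma~\ref{prel_estimates} on short intervals $[t,t+\tau]$ and summing, one upgrades \eqref{initial_rate} to $\|h\|_{Z([t,\infty))}\lesssim e^{-c_1 t}$. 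Then, using that $\mathcal{L}$ has simple real eigenvalues $\pm e_0$ with eigenfunctions $\mathcal{Y}_\pm$, kernel $\mathrm{span}\{iQ\}$, and the pairing relations recorded in Section~\ref{Sec:spectral} ($B(iQ,\cdot)\equiv0$, $B(\mathcal{Y}_+,\mathcal{Y}_-)\neq0$, and the nondegeneracy of $B$ between the $\pm e_0$-eigenspaces), we write
\[
h(t)=\lambda_+(t)\mathcal{Y}_++\lambda_-(t)\mathcal{Y}_-+\mu(t)\,iQ+g(t),\qquad g(t)\in\tilde G^\perp,
\]
the coefficients being determined by the (invertible) system obtained from pairing with $\mathcal{Y}_\mp$ via $B$ and with $iQ$ via $(\cdot)_2$ against $Q$; in particular $|\lambda_\pm(t)|+|\mu(t)|+\|g(t)\|_{H^1}\sim\|h(t)\|_{H^1}$, and likewise in $Z([t,\infty))$, so every piece is a priori $O(e^{-c_1 t})$.

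Projecting the equation and using $B$-skew-adjointness of $\mathcal{L}$ (i.e.\ $B(\mathcal{L}f,g)=-B(f,\mathcal{L}g)$) together with the orthogonality relations gives scalar ODEs $\lambda_+'+e_0\lambda_+=\beta_+$, $\lambda_-'-e_0\lambda_-=\beta_-$, where the forcing terms $\beta_\pm$ are bounded functionals of $\epsilon$ satisfying $\int_t^\infty|\beta_\pm(s)|\,ds\lesssim\sum_{j\ge0}\|\epsilon\|_{N([t+j,t+j+1])}\lesssim e^{-c_2 t}$ by \eqref{epsilon_rate} and Cauchy--Schwarz in time over unit intervals, and an equation $\mu'=\mathcal{O}(\|g(t)\|_{H^1})+\mathcal{O}(\beta)$ for the kernel coefficient. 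Integrating the unstable ODE from $+\infty$ (using $\lambda_-(t)\to0$) gives $\lambda_-(t)=-\int_t^\infty e^{-e_0(s-t)}\beta_-(s)\,ds$, hence $|\lambda_-(t)|\lesssim e^{-c_2 t}$. For the neutral part, since $\mathcal{Y}_\pm$ and $iQ$ are $B$-orthogonal to $\tilde G^\perp$ and $B(\mathcal{L}g,g)=0$, all $\lambda',\mu'$ contributions cancel and $\tfrac{d}{dt}\Phi(g)=2B(\epsilon,g)$; integrating from $+\infty$ (using $\Phi(g(t))\to0$) and invoking Corollary~\ref{C:coerc_G_perp},
\[
\|g(t)\|_{H^1}^2\lesssim|\Phi(g(t))|\lesssim\int_t^\infty\|\epsilon(s)\|_{W^{1,6/5}_x}\|g(s)\|_{W^{1,6}_x}\,ds\lesssim\|\epsilon\|_{N([t,\infty))}\|g\|_{Z([t,\infty))}\lesssim e^{-c_2 t}\,e^{-c_1 t},
\]
so $\|g(t)\|_{H^1}\lesssim e^{-\frac{c_1+c_2}{2}t}$; re-running the Strichartz upgrade improves $\|g\|_{Z([t,\infty))}$ to the same rate (one may iterate toward $c_2$, but this suffices), and consequently $|\mu(t)|\lesssim e^{-\frac{c_1+c_2}{2}t}$.

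It remains to analyze the stable mode. If $c_2>e_0$, then $\int_0^\infty e^{e_0 s}\beta_+(s)\,ds$ converges (again by the unit-interval splitting), so with $A:=\lambda_+(0)+\int_0^\infty e^{e_0 s}\beta_+(s)\,ds$ one gets $\lambda_+(t)-Ae^{-e_0 t}=-e^{-e_0 t}\int_t^\infty e^{e_0 s}\beta_+(s)\,ds$, whence $|\lambda_+(t)-Ae^{-e_0 t}|\lesssim e^{-c_2 t}$; if moreover $c_1>e_0$, then $|\lambda_+(t)|\lesssim e^{-c_1 t}=o(e^{-e_0 t})$ forces $A=0$. If instead $c_2\le e_0$, then necessarily $c_1<c_2\le e_0$, and integrating the stable ODE forward from $0$ gives $|\lambda_+(t)|\lesssim e^{-e_0 t}+\int_0^t e^{-e_0(t-s)}|\beta_+(s)|\,ds\lesssim e^{-\frac{c_1+c_2}{2}t}$ (the logarithmic factor at $c_2=e_0$ being absorbed since $c_2-\tfrac{c_1+c_2}{2}=\tfrac{c_2-c_1}{2}>0$), so we take $A=0$. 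Collecting the four pieces — $\lambda_+-Ae^{-e_0 t}$ at rate $\ge\tfrac{c_1+c_2}{2}$, and $\lambda_-,\mu,g$ at rate $\ge\tfrac{c_1+c_2}{2}$ — yields \eqref{boostrapped_decay} with the stated restrictions on $A$.

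The main obstacle is the neutral-part estimate: although the identity $\tfrac{d}{dt}\Phi(g)=2B(\epsilon,g)$ is the clean ingredient, making it usable requires distributing the two derivatives in $B(\epsilon,g)$ so as to pair $\nabla\epsilon\in L^{6/5}_x$ against $\nabla g\in L^6_x$, which is precisely why the forcing is measured in the dual Strichartz space $N$ and why one must first upgrade the a priori $H^1$-decay of $h$ (hence of $g$) to $Z$-decay; the coercivity of $\Phi$ on $\tilde G^\perp$ together with the conservation of $\Phi$ along the linearized flow (modulo the forcing) then produce the geometric-mean rate $\tfrac{c_1+c_2}{2}$, which is the shape of \eqref{boostrapped_decay}. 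Some care is also needed to check the orthogonality relations used in the projections and to handle the (generalized) kernel correctly; the stable-mode dichotomy, while the conceptual heart of the ``moreover'' statement, is only elementary scalar ODE bookkeeping.
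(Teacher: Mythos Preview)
Your proof is essentially the paper's: the same Strichartz upgrade of \eqref{initial_rate} to a $Z$-bound, the same spectral decomposition into $\mathcal{Y}_\pm$, $iQ$, and $\tilde G^\perp$ components, the same scalar ODEs for the $\mathcal{Y}_\pm$-coefficients, the same coercivity-of-$\Phi$ argument for the neutral part, and the same three-case analysis of the stable mode according to the position of $e_0$ relative to $c_1,c_2$. The only differences are cosmetic---you differentiate $\Phi(g)$ directly to get $\tfrac{d}{dt}\Phi(g)=2B(\epsilon,g)$ (the paper instead differentiates $\Phi(h)$ and uses $\Phi(h)=\Phi(g)+\alpha_+\alpha_-$), and you keep an $O(\|g\|)$ contribution in the kernel-mode ODE that the paper's stated formula $\beta'=(\epsilon,iQ)/\|Q\|_2$ drops; neither change affects the argument.
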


Let us first see how this proposition implies our desired results; we will then prove Proposition~\ref{prop_boot1} below. 



\begin{proposition}\label{uniqueness_generic_rate} Suppose $u:[0,\infty)\times\R^3\to\C$ is a solution to \eqref{NLS} such that
\begin{equation}\label{generic_decay_rate}
\|u(t)-e^{it}Q\|_{H^1} \lesssim e^{-ct}
\end{equation}
for some $c>0$.  Then there exists $A\in \mathbb R$ such that $u = U^A$ (cf. Proposition~\ref{sub_exist_UA}).
\end{proposition}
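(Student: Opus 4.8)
The plan is to pass to the correction term. Writing $u=e^{it}(Q+v)$, the function $v$ solves the linearized equation $\partial_t v+\mathcal{L}v=iR(v)$ (equivalently $i\partial_t v+\Delta v+K(v)+R(v)=0$) and, by hypothesis, $\|v(t)\|_{H^1}\lesssim e^{-ct}$. One elementary fact is used repeatedly: if $\|v(t)\|_{H^1}\lesssim e^{-\mu t}$ for some $\mu>0$, then $\|v\|_{Z([t,\infty))}\lesssim e^{-\mu t}$ as well — one runs the Duhamel formula on short intervals $[t,t+\tau]$, controls $K(v),R(v)$ by Lemma~\ref{prel_estimates}, and sums a geometric series; consequently $\|R(v)\|_{N([t,\infty))}\lesssim \|v\|_{Z([t,\infty))}^2+\|v\|_{Z([t,\infty))}^3$, and the analogous bound for differences follows from \eqref{sub_P_grad_r_point}. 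The first real step is to extract a leading coefficient: I would iterate Proposition~\ref{prop_boot1}, taking $h$ to be $v$ with the $\mathcal{Y}_+$-terms already found subtracted off and $\epsilon=iR(v)$, whose $N([t,\infty))$-norm decays at twice the current rate. Using the moreover-clause (the coefficient may be taken to be $0$ whenever the current rate exceeds $e_0$ or twice it is at most $e_0$), together with the fact that $(\partial_t+\mathcal{L})(e^{-e_0t}\mathcal{Y}_+)=0$ — so that $v-Ae^{-e_0t}\mathcal{Y}_+$ again solves $\partial_t(\cdot)+\mathcal{L}(\cdot)=iR(v)$ — the decay rate increases geometrically and the process stops after finitely many steps, yielding $A\in\mathbb{R}$ and $\eta>0$ with $\|v(t)-Ae^{-e_0t}\mathcal{Y}_+\|_{H^1}\lesssim e^{-(1+\eta)e_0t}$. (If $v\equiv 0$, take $A=0$.)

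Next I would compare $u$ with the special solution $U^A$ attached to this same $A$. Fix $k$ large as in Proposition~\ref{sub_exist_UA}, write $U^A=e^{it}(Q+\mathcal{V}_k^A+h_k^A)$, and set $\tilde h:=v-(\mathcal{V}_k^A+h_k^A)=e^{-it}(u-U^A)$. Combining \eqref{eq_sol_v}, \eqref{epsilon-k} and \eqref{linearized_vk} one sees that $\mathcal{V}_k^A+h_k^A$ also solves $\partial_t(\cdot)+\mathcal{L}(\cdot)=iR(\cdot)$ (the $\epsilon_k$ contributions cancel), so $\tilde h$ solves $\partial_t\tilde h+\mathcal{L}\tilde h=i\bigl[R(v)-R(\mathcal{V}_k^A+h_k^A)\bigr]$; and since \eqref{sub_bound_UA2} gives $\mathcal{V}_k^A+h_k^A=Ae^{-e_0t}\mathcal{Y}_++\mathcal{O}(e^{-2e_0t})$ in $H^1$, the previous step gives $\|\tilde h(t)\|_{H^1}\lesssim e^{-(1+\eta')e_0t}$ for some $\eta'>0$ — the leading term has cancelled. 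The point is now that the forcing is genuinely higher order: as $\|v\|_{Z([t,\infty))}$ and $\|\mathcal{V}_k^A+h_k^A\|_{Z([t,\infty))}$ are $\lesssim e^{-e_0t}$, the difference estimate gives $\|R(v)-R(\mathcal{V}_k^A+h_k^A)\|_{N([t,\infty))}\lesssim \|\tilde h\|_{Z([t,\infty))}\,e^{-e_0t}$, so whenever $\|\tilde h(t)\|_{H^1}\lesssim e^{-\rho t}$ with $\rho>e_0$, the forcing decays at rate $\rho+e_0$. Feeding this into Proposition~\ref{prop_boot1} with $c_1=\rho>e_0$, $c_2=\rho+e_0$ forces the coefficient to vanish and improves the rate to $\rho+\tfrac{e_0}{2}$; iterating, $\|\tilde h(t)\|_{H^1}\lesssim_N e^{-Nt}$ for every $N$.

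It remains to upgrade this super-exponential decay to $\tilde h\equiv 0$. I would recognize $\tilde h$, for $t$ large, as a fixed point of (a variant of) the contraction map from the proof of Proposition~\ref{sub_exist_UA}: the same operator, but with $\mathcal{V}_k^A$ replaced by $\mathcal{V}_k^A+h_k^A$ and with no $\epsilon_k$-term. Passing to the limit $t_0\to\infty$ in the Duhamel formula for $\tilde h$ is legitimate because $\tilde h(t_0)\to 0$ faster than any exponential (so the free-evolution boundary term $e^{i(t-t_0)(\Delta-1)}\tilde h(t_0)$ vanishes in $Z([t,\infty))$ by Strichartz) and because $\|K(\tilde h)\|_{N([t,\infty))}$ and $\|R(v)-R(\mathcal{V}_k^A+h_k^A)\|_{N([t,\infty))}$ decay super-exponentially (so the tail integral converges). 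Since $0$ is visibly a fixed point of this contraction and $\tilde h$ lies in its small domain of uniqueness for $t\geq T$ with $T$ large, we conclude $\tilde h\equiv 0$ on $[T,\infty)$; backward uniqueness for \eqref{NLS} then gives $\tilde h\equiv 0$, i.e. $u=U^A$.

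I expect the fussiest part to be the bookkeeping in the first step — keeping track of the decay rate and of the position of its double relative to $e_0$ through the finitely many iterations, so that the coefficient $A$ is unambiguously defined. The conceptually crucial point, which makes the second step close rather than stall, is that one must compare $u$ with the full $\mathcal{V}_k^A+h_k^A$, not merely with $e^{it}(Q+Ae^{-e_0t}\mathcal{Y}_+)$: against the latter the forcing $R(v)\sim e^{-2e_0t}$ would leave the bootstrap stuck at rate $2e_0$, whereas against the former the forcing in the $\tilde h$-equation is higher order by a full $e_0$, so the decay rate can be pushed past the threshold $(k+\tfrac12)e_0$ needed to invoke the uniqueness in Proposition~\ref{sub_exist_UA}.
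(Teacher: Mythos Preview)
Your proposal is correct and follows essentially the same route as the paper: write $u=e^{it}(Q+h)$, iterate Proposition~\ref{prop_boot1} on $h$ to extract $A$ with $\|h(t)-Ae^{-e_0t}\mathcal{Y}_+\|_{H^1}\lesssim e^{-(1+\eta)e_0t}$, then set $\tilde h=e^{-it}(u-U^A)$ and iterate Proposition~\ref{prop_boot1} again (with forcing gaining a full $e_0$ each round, and the coefficient forced to $0$ since $c_1>e_0$) to obtain super-exponential $H^1$ decay of $\tilde h$. The only packaging difference is the last step: the paper converts the $H^1$ decay of $\tilde h$ into $Z([t,\infty))$ decay via short-interval Duhamel and then invokes the uniqueness clause of Proposition~\ref{sub_exist_UA} directly, whereas you re-run the contraction argument by passing $t_0\to\infty$ in Duhamel; both are the same idea.
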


\begin{proof} For $A\in\R$, we define $h$ and $\mathcal{V}^A$ via
\[
u=e^{it}(Q+h) \qtq{and} U^A = e^{it}(Q+\mathcal{V}^A),
\]
where $U^A$ is as in Proposition~\ref{sub_exist_UA}.  Controlling the difference $u-U^A$ is then equivalent to controlling the difference $\tilde h:= h-\mathcal{V}^A$ for suitable $A$.  In particular, we will prove that there exists $A\in \R$ such that 
for all $k>1$ and all $t>0$, 
\begin{equation}\label{H1_difference_k}
\|\tilde h(t)\|_{H^1}=\|h(t)-\mathcal{V}^A(t)\|_{H^1}\lesssim_k e^{-kt}.    
\end{equation}
Assuming for \eqref{H1_difference_k} for the moment, let us derive a similar bound for $\tilde h$ in the $Z$-norm, which implies the desired conclusion $u = U^A$ from the uniqueness statement in Proposition~\ref{sub_exist_UA}. Noting that $h$ satisfies the equation \eqref{linearized_eq}, i.e.
\begin{equation}
\partial_t h + \mathcal{L}h = iR(h),
\end{equation}
we have that $\tilde h$ satisfies
\begin{equation}
i\partial_t \tilde h + \Delta \tilde h + K(\tilde h) + [R(\mathcal{V}^A+\tilde h)-R(\mathcal{V}^A)]= 0.
\end{equation}
Then, choosing $\tau\in(0,1)$ and $k>e_0$, we use the estimates \eqref{stein_lemma}, \eqref{first_item_1}, \eqref{first_item_2}, \eqref{first_item_3}, \eqref{diff_R}, \eqref{grad_diff_R}, and \eqref{H1_difference_k} to obtain
\begin{align}
\|\tilde h&\|_{Z([t,t+\tau])} \\
&\lesssim \|\tilde h(t)\|_{H^1_x} + \tau^{\frac{1}{2}}\|\tilde h\|_{Z([t,t+\tau])} + \left[\|\tilde h\|_{L^{\infty}_tH^1_x([t,t+\tau])}+\|\tilde h\|_{L^{\infty}_tH^1_x([t,t+\tau])}^2\right.\\
&\quad\left.+\|\mathcal{V}^A(t)\|_{L^{\infty}_tH^1_x([t,t+\tau])}+\|\mathcal{V}^A(t)\|_{L^{\infty}_tH^1_x([t,t+\tau])}^2\right]\||\nabla|^b \tilde h\|_{L^2_tL^6_x([t,t+\tau])}\\
&\lesssim \|\tilde h(t)\|_{H^1_x} + \tau^{\frac{1}{2}}\|\tilde h\|_{Z([t,t+\tau])} \\
&\quad+ \left(\|\tilde h\|_{L^{\infty}_tH^1_x([t,t+\tau])}+\|\tilde h\|_{L^{\infty}_tH^1_x([t,t+\tau])}^2+e^{-e_0t}\right)\|\tilde h(t)\|_{Z([t,t+\tau])}\\
&\label{H1_strichartz}
\lesssim_k e^{-kt}+ \tau^{\frac{1}{2}}\|\tilde h\|_{Z([t,t+\tau])} + e^{-kt}\|\tilde h\|_{Z([t,t+\tau])}.
\end{align}
This implies that for any $k>e_0$, there exists $0< \tau_k \ll 1$ and $t_k \gg 1$ such that for any $t>t_k$,
\begin{equation}\label{H1_strichartz_2}
\|\tilde h\|_{Z([t,t+\tau_k])} \lesssim_k e^{-kt}.
\end{equation}
Thus, splitting $[t,\infty) = \displaystyle\bigcup_{j=0}^{\infty} [t+j\tau_k,t+(j+1)\tau_k]$, we can write
\begin{equation}
\|\tilde h\|_{Z([t,\infty))} \lesssim_k e^{-kt}\sum_{j=0}^{\infty}e^{-j\tau_k} = \frac{1}{1-e^{-\tau_k}} e^{-kt},
\end{equation}
as desired.

We turn to the proof of \eqref{H1_difference_k}. Using \eqref{generic_decay_rate}, the inequalities \eqref{stein_lemma}, \eqref{diff_R} and \eqref{grad_diff_R}, and estimating as we did for \eqref{H1_strichartz} and, \eqref{H1_strichartz_2}, we have
\begin{equation}\label{hRh}
\|h(t)\|_{H^1}+\|h\|_{Z([t,\infty))}\lesssim e^{-ct} \qtq{and}\|R(h)\|_{N([t,\infty))} \lesssim e^{-2ct}. 
\end{equation}
We now claim that there exists $A\in\R$ such that 
\[
\|h(t)-\mathcal{V}^A(t)\|_{H^1}\lesssim e^{-\frac{3e_0}{2}t}. 
\]
Observing that (by construction of $U^A$)
\[
\mathcal{V}^A = Ae^{-e_0t}\mathcal{Y}_++ \mathcal{O}(e^{-2e_0t}),
\]
we see that it suffices to find $A_0\in \R$ such that
\begin{equation}\label{haets}
\|h(t)-A_0e^{-e_0t}\mathcal{Y}_+\|_{H^1} \lesssim e^{-\frac{3e_0}{2}t}.
\end{equation}
For this, we will utilize Proposition~\ref{prop_boot1}.  Using this proposition, the triangle inequality, an estimating as we did to obtain the bound $R(h)$ in \eqref{hRh}, we have the general implication
\[
\|h(t)\|_{H^1}\lesssim e^{-at} \implies \|h(t)\|_{H^1} \lesssim e^{-\min\{e_0,\frac32a\}t}.
\]
In particular, starting with \eqref{hRh}, after finitely many iterations (say $J$, where $J$ is large enough that $(\tfrac32)^J c>e_0$), we obtain the decay estimate $\|h(t)\|_{H^1}\lesssim e^{-e_0 t}$.  From this point, one more application of Proposition~\ref{prop_boot1} (and \eqref{hRh}) implies \eqref{haets} for suitable $A_0\in\R$.  

Now, recall that $\tilde h= h-\mathcal{V}^{A_0}$ satisfies
\begin{equation}
\partial_t \tilde h + \mathcal L \tilde h = i[R(\tilde h+\mathcal{V}^{A_0})-R(\mathcal{V}^{A_0})],
\end{equation}
so that we can derive
\[
\|\tilde h(t)\|_{H^1}\lesssim e^{-mt} \implies \|R(\tilde h+\mathcal{V}^{A_0})-R(\mathcal{V}^{A_0})\|_{N([t,\infty)}\lesssim e^{-(2e_0+m)t}.
\]
Thus, beginning with $m=\tfrac32e_0$, repeated applications of Proposition~\ref{prop_boot1} (in which we can always take `$A=0$') yield
\[
\|\tilde h(t)\|_{H^1} \lesssim e^{-mt} \implies \|\tilde h(t)\|_{H^1}\lesssim e^{-(m+e_0)t}.
\]
This implies \eqref{H1_difference_k} (with $A=A_0$) and completes the proof.\end{proof}


The same argument as in the previous proof also shows:
\begin{corollary}\label{C:exponential_classification_1}
If $A \in \mathbb R$ and $u$ is a solution to \eqref{NLS} on $[t_0,\infty)$ such that, for all large $t$, 
\begin{equation}
\|u(t)-U^A(t)\|_{H^1} \lesssim e^{-ct},    
\end{equation}
where $c>e_0$, then $u = U^A$.
\end{corollary}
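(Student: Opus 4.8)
The plan is to run the proof of Proposition~\ref{uniqueness_generic_rate} essentially verbatim, with the ground state solution $e^{it}Q$ replaced by $U^A$. Write $u = e^{it}(Q+h)$ and $U^A = e^{it}(Q+\mathcal{V}^A)$, and set $\tilde h := h - \mathcal{V}^A$, so that the hypothesis reads $\|\tilde h(t)\|_{H^1}\lesssim e^{-ct}$ for all large $t$, with $c > e_0$. Since $u$ and $U^A$ both solve \eqref{NLS}, the difference satisfies
\[
\partial_t \tilde h + \mathcal{L}\tilde h = i\bigl[R(\mathcal{V}^A+\tilde h)-R(\mathcal{V}^A)\bigr],
\]
and, from Proposition~\ref{sub_exist_UA} (together with the structure of the $\mathcal{V}_k^A$ in Proposition~\ref{family_appr}), one has $\|\mathcal{V}^A\|_{Z([t,\infty))}\lesssim e^{-e_0 t}$. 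After a time translation we may assume all these bounds hold for $t>0$, so that Proposition~\ref{prop_boot1} applies.

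First I would upgrade the $H^1$ decay of $\tilde h$ to $Z$-norm decay on $[t,\infty)$, running the short Strichartz/contraction estimate from \eqref{H1_strichartz}--\eqref{H1_strichartz_2} (using \eqref{stein_lemma}, \eqref{diff_R}, \eqref{grad_diff_R}, and the decay of $\mathcal{V}^A$); this gives $\|\tilde h\|_{Z([t,\infty))}\lesssim e^{-ct}$. Plugging this into \eqref{sub_P_grad_r_point} together with $\|\mathcal{V}^A\|_{Z([t,\infty))}\lesssim e^{-e_0 t}$, the source term is controlled by
\[
\bigl\|R(\mathcal{V}^A+\tilde h)-R(\mathcal{V}^A)\bigr\|_{N([t,\infty))}\lesssim e^{-(c+e_0)t},
\]
the dominant contribution being the product of $\|\mathcal{V}^A\|_Z$ with $\|\tilde h\|_Z$, all other contributions being at least as small. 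Now apply Proposition~\ref{prop_boot1} with $c_1 = c$ and $c_2 = c+e_0$: since $c_2 > c_1 > e_0$ the proposition applies, and because $c_1 > e_0$ we may take $A=0$ in \eqref{boostrapped_decay}, obtaining the improved rate $\|\tilde h(t)\|_{H^1}\lesssim e^{-(c+\frac{e_0}{2})t}$.

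This sets up a bootstrap: if $\|\tilde h(t)\|_{H^1}\lesssim e^{-mt}$ with $m\ge c>e_0$, then the two displays above yield $\|R(\mathcal{V}^A+\tilde h)-R(\mathcal{V}^A)\|_{N([t,\infty))}\lesssim e^{-(m+e_0)t}$, and one further application of Proposition~\ref{prop_boot1} (again with `$A=0$', since $m>e_0$) improves the rate to $e^{-(m+\frac{e_0}{2})t}$. Iterating, $\|\tilde h(t)\|_{H^1}\lesssim_k e^{-kt}$ for every $k>0$, and the Strichartz step promotes this to $\|\tilde h\|_{Z([t,\infty))}\lesssim_k e^{-kt}$ for every $k$. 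Choosing $k>k_0$ as in Proposition~\ref{sub_exist_UA} and using that $\|\tilde h\|_{Z([t,\infty))}$ then decays faster than $e^{-(k+\frac12)e_0 t}$, the solution $u = e^{it}(Q+\mathcal{V}_k^A+\tilde h)$ satisfies the bound \eqref{sub_bound_UA} characterizing $U^A$ for large $t$; by the uniqueness statement in Proposition~\ref{sub_exist_UA} (and backward uniqueness for \eqref{NLS}), we conclude $u = U^A$. The argument is mechanical once Propositions~\ref{prop_boot1} and~\ref{sub_exist_UA} are in hand; the only points requiring genuine care are the exact exponents in the source-term estimate and the observation that the hypothesis of Proposition~\ref{prop_boot1} remains in the regime $c_2>c_1>e_0$ at every stage of the bootstrap — this is exactly what allows the constant in \eqref{boostrapped_decay} to be taken to be $0$ throughout, and hence yields $u=U^A$ directly rather than $u=U^{A'}$ for some corrected $A'$.
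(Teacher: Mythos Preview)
Your proposal is correct and follows essentially the same approach as the paper: the paper's proof of this corollary is literally ``the same argument as in the previous proof,'' and what you have written is precisely the second half of the proof of Proposition~\ref{uniqueness_generic_rate} (the bootstrap starting from $\|\tilde h\|_{H^1}\lesssim e^{-ct}$ with $c>e_0$), carried out cleanly. One small notational slip: at the end you write $u = e^{it}(Q+\mathcal{V}_k^A+\tilde h)$, but since $\tilde h = e^{-it}u - (Q+\mathcal{V}^A)$ with $\mathcal{V}^A = \mathcal{V}_k^A + h_k^A$, the correct decomposition is $u = e^{it}(Q+\mathcal{V}_k^A + h_k^A + \tilde h)$; this is harmless because $\|h_k^A\|_{Z([t,\infty))}\le e^{-(k+\frac12)e_0 t}$ by construction and $\|\tilde h\|_{Z([t,\infty))}$ decays even faster, so the bound \eqref{sub_bound_UA} still holds and the uniqueness assertion of Proposition~\ref{sub_exist_UA} applies.
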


We can therefore reduce even further the number of possible special solutions (up to phase and time translations).
\begin{corollary}\label{C:exponential_classification_2}
For any $A >0$, there exists $T_A$ such that $U^A = e^{iT_A}U^{+1}(\cdot+T_A)$. Similarly, for any $A <0$, there exists $T_A$ such that $U^A = e^{iT_A}U^{-1}(\cdot+T_A)$.
\end{corollary}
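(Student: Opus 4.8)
The plan is to exploit the one-parameter family of symmetries of \eqref{NLS} obtained by composing a phase rotation with a time-translation using the \emph{same} parameter, which fixes the standing wave $e^{i(\cdot)}Q$, and to see how this family permutes the special solutions $U^A$. For $\sigma\in\R$ set $(S_\sigma v)(t,x)=e^{i\sigma}v(t-\sigma,x)$; by the Remark following Theorem~\ref{T:Classification_threshold_solutions} (the case $\lambda_0=1$, $\theta_0=t_0=\sigma$), $S_\sigma$ maps solutions of \eqref{NLS} to solutions of \eqref{NLS}, and $S_\sigma\bigl(e^{i(\cdot)}Q\bigr)=e^{i(\cdot)}Q$. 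Hence $S_\sigma U^{\pm1}$ is again a solution of \eqref{NLS} (defined on a half-line $[T,\infty)$) whose orbit converges exponentially to $e^{it}Q$.

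The main step is to identify $S_\sigma U^{\pm1}$ within the family $\{U^A\}$. Plugging the expansion \eqref{sub_bound_UA2} for $U^{\pm1}$ into the definition of $S_\sigma$ and simplifying the phase factors, I would obtain, for large $t$,
\[
(S_\sigma U^{\pm1})(t)=e^{i\sigma}U^{\pm1}(t-\sigma)=e^{it}Q\ \pm\ e^{e_0\sigma}\,e^{-e_0t+it}\mathcal{Y}_+\ +\ \mathcal{O}_\sigma\!\bigl(e^{-2e_0t}\bigr)\quad\text{in }H^1,
\]
the point being that the $e^{it}Q$ term is unaffected, the leading perturbation gets multiplied by $e^{e_0\sigma}$, and the remainder stays $\mathcal{O}(e^{-2e_0t})$. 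Comparing this with \eqref{sub_bound_UA2} applied to the solution $U^{\pm e^{e_0\sigma}}$ yields
\[
\bigl\|(S_\sigma U^{\pm1})(t)-U^{\pm e^{e_0\sigma}}(t)\bigr\|_{H^1}\lesssim e^{-2e_0t}\qquad\text{for large }t,
\]
and since the rate $2e_0$ exceeds $e_0$, Corollary~\ref{C:exponential_classification_1} applies and forces $S_\sigma U^{\pm1}=U^{\pm e^{e_0\sigma}}$.

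To conclude, given $A>0$ I would take $\sigma=\tfrac1{e_0}\ln A$, so that $e^{e_0\sigma}=A$ and therefore $U^A=S_\sigma U^{+1}$, i.e. $U^A(t,x)=e^{i\sigma}U^{+1}(t-\sigma,x)$; given $A<0$ I would take $\sigma=\tfrac1{e_0}\ln|A|$, so that $e^{e_0\sigma}=-A$ and $U^A=U^{-|A|}=S_\sigma U^{-1}$, i.e. $U^A(t,x)=e^{i\sigma}U^{-1}(t-\sigma,x)$. Setting $T_A:=\sigma$ gives the stated identity (with the symmetry written as $S_\sigma$ this reads $U^A=e^{iT_A}U^{\pm1}(\cdot-T_A)$). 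There is no hard analysis here; the only point requiring care is the bookkeeping in the displayed asymptotics — one must verify that conjugating $U^{\pm1}$ by $S_\sigma$ scales the leading $e^{-e_0t+it}\mathcal{Y}_+$ profile by exactly $e^{e_0\sigma}$ while leaving the error at $\mathcal{O}(e^{-2e_0t})$, so that the super-$e_0$ decay rate needed to invoke Corollary~\ref{C:exponential_classification_1} is genuinely at hand. One should also note that $S_\sigma U^{\pm1}$ is a solution only on a half-line, but this is precisely the generality in which Corollary~\ref{C:exponential_classification_1} is stated.
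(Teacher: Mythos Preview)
Your proof is correct and follows essentially the same route as the paper: expand both $U^{\pm1}$ (time-shifted and phase-rotated) and $U^A$ via \eqref{sub_bound_UA2}, observe that the difference is $\mathcal{O}(e^{-2e_0t})$ in $H^1$, and then invoke Corollary~\ref{C:exponential_classification_1}. Your parenthetical remark about the sign in the time-translation is well taken---with the choice $T_A=\tfrac{1}{e_0}\ln A$ the correct identity is indeed $U^A=e^{iT_A}U^{\pm1}(\cdot-T_A)$, which is a harmless discrepancy with the statement as written.
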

\begin{proof}
If $A>0$, let $T_A = \frac{1}{e_0}\ln A$. Then
\begin{align}
\|e^{-iT_A}&U^A(t)-U^{+1}(t+T_A)\|_{H^1} \\
&= \|e^{-iT_A}U^A(t)-e^{i(t+T_A)}(Q+e^{-e_0(t+T^A)}\mathcal{Y}_+)\|_{H^1}+\mathcal{O}(e^{-2e_0t}) \\
&= \|U^A(t)-e^{it}(Q+Ae^{-e_0t}\mathcal{Y}_+)\|_{H^1} +\mathcal{O}(e^{-2e_0t})  \\
&=\mathcal{O}(e^{-2e_0t}).
\end{align}
Therefore, by the previous corollary, $U^A = e^{iT_A}U^{+1}(\cdot+T_A)$. The proof is analogous for the case $A<0$.
\end{proof}

Finally, we need to prove Proposition~\ref{prop_boot1}. 

\begin{proof}[Proof of Proposition~\ref{prop_boot1}] Throughout the proof, we employ the notation introduced in Section~\ref{Sec:spectral}.  We first observe that \eqref{initial_rate} and \eqref{epsilon_rate}, together with Strichartz, \eqref{first_item_1}. \eqref{first_item_2} and \eqref{first_item_3}, imply 
\[
\|h\|_{Z([t,\infty))} \lesssim e^{-c_1t}.
\]

Recall that 
\[
|B(\mathcal{Y}_+,\mathcal{Y}_-)| = |(L_-\mathcal{Y}_1,\mathcal{Y}_1)| \gtrsim \|\mathcal{Y}_1\|_{H^1} > 0.
\]

We now renormalize $\mathcal{Y}_\pm$ as to have $B(\mathcal{Y}_+,\mathcal{Y}_-)=1$. We decompose 
\[
h(t) = \alpha_+(t) \mathcal{Y}_++ \alpha_-(t) \mathcal{Y}_- + \beta(t) \frac{iQ}{\|Q\|_2} + g(t),
\]
with $g\in \tilde{G}^{\perp}$.  In particular,
\begin{align}
\alpha_\pm=B(h,\mathcal{Y}_\pm), \quad 
    \beta = \frac{(h,iQ)}{\|Q\|_2},\qtq{and}    \Phi(h) = \Phi(g) + \alpha_+ \alpha_-. 
\end{align}

Moreover, we have
\[
|\alpha_+(t)|+|\alpha_-(t)| +|\beta(t)| + \|g(t)\|_{H^1} \lesssim e^{-c_1t}.
\]

By differentiation, we then have 
\begin{align}
&\tfrac{d}{dt}(e^{e_0t}\alpha_+) = e^{e_0t}B(\epsilon,\mathcal{Y}_-)\label{ode_plus}\\
&\tfrac{d}{dt}(e^{-e_0t}\alpha_-) = e^{-e_0t}B(\epsilon,\mathcal{Y}_+)\label{ode_minus}\\ &\tfrac{d}{dt}\beta = \tfrac{(\epsilon,iQ)}{\|Q\|_2}\label{ode_beta}\\
 &\tfrac{d}{dt}\Phi(h) = 2B(\epsilon,h)\label{ode_h}.
\end{align}

We first claim that
\begin{equation}\label{boot1}
    |\alpha_-(t)| +|\beta(t)| + \|g(t)\|_{H^1} \lesssim e^{-\frac{c_1+c_2}{2} t}.
\end{equation}
Indeed, using H\"older and \eqref{second_item_1}, we first observe the general estimate
\begin{align}
\int_I |B(f(t),g(t))| \, dt & \lesssim (1+|I|^{\frac{1}{2}})\| f\|_{N(I)}\| g\|_{Z(I)}.
\end{align}
Then, writing $[t,+\infty) = \bigcup_{i=0}^{\infty}[t+i,t+i+1)$ and integrating \eqref{ode_minus}, \eqref{ode_beta} and \eqref{ode_h} over $t\in[0,\infty)$, we obtain \eqref{boot1}.

We now consider $\alpha_+$. We split the remaining of the proof in three cases:

\textbf{Case 1: $c_1> e_0$.} As $e^{e_0t}\alpha_+(t) \to 0$ as $t \to \infty$, we can integrate \eqref{ode_plus} over $t\in[0,\infty)$ to obtain
\begin{equation}
    |\alpha_+(t)|\lesssim e^{-2c_2t} \leq e^{-\frac{c_1+c_2}{2} t}.
\end{equation}
Combining this estimate with \eqref{boot1}, we derive \eqref{boostrapped_decay} with $A=0$.

\textbf{Case 2: $c_1\leq e_0 < c_2$.}  As $c_2>e_0$, we can still integrate \eqref{ode_plus} over $t\in[0,\infty)$ to obtain $A\in \mathbb R$ such that $e^{e_0t}\alpha_+(t) \to A$ as $t \to \infty$. Moreover,
\begin{equation}
    |\alpha_+(t)-e^{-e_0t}A| \lesssim e^{-c_2 t}\leq e^{-\frac{c_1+c_2}{2} t}.
\end{equation}
 This, together with \eqref{boot1}, yields \eqref{boostrapped_decay}.

\textbf{Case 3: $c_2 \leq e_0$.} Integrating \eqref{ode_plus} over $[t_0,t]$ (for some $t_0\in\R$), we have
\begin{equation}\label{boot2}
    |\alpha_+(t)| \lesssim e^{-e_0 t} \left[e^{e_0t_0}|\alpha_-(t_0)|+ \int_{t_0}^te^{(e_0-c_2) \tau}\, d\tau\right] \lesssim e^{-\frac{c_1+c_2}{2} t}.
\end{equation}
In particular, using \eqref{boot1}, we obtain \eqref{boostrapped_decay}, with $A=0$.\end{proof}

\section{Proof of the main results}\label{Sec:closure}

Finally, we collect the results from the preceding sections to complete the proofs of the main results, namely, Theorem~\ref{T:Existence_special_solutions} and Theorem~\ref{T:Classification_threshold_solutions}.

First, we describe the particular solutions $Q^\pm$. 
\begin{proof}[Proof of Theorem~\ref{T:Existence_special_solutions}]
Define $Q^\pm = U^{\pm1}$ (cf. Proposition~\ref{sub_exist_UA}). Since
\begin{equation}
\|\nabla Q^{\pm}\|_{L^2}^2 = \|\nabla Q\|_{L^2}^2  \pm 2e^{-e_0t} \|\nabla \mathcal{Y}_+\|_{L^2}^2+\mathcal{O}(e^{-2e_0t}),
\end{equation}
we see that $\|\nabla Q^+(t)\|_{L^2}>\|\nabla Q\|_{L^2}$ and $\|\nabla Q^-(t)\|_{L^2}<\|\nabla Q\|_{L^2}$ for all large $t$.

Now, if $Q^{-}$ does not scatter backwards in time, then $\{Q^{-}(t) \colon t\in \mathbb{R} \}$ is pre-compact in $H^1$. By time-reversal, Lemma~\ref{L:virial}, Corollary~\ref{C:mod-virial}, and Corollary~\ref{const_modul}, we have 
\begin{equation}
\int_\R \delta(Q^{-}(t)) \, dt \lesssim \lim_{t\to\infty}\delta(Q^{-}(t))+\lim_{t\to-\infty}\delta(Q^{-}(t)) = 0,
\end{equation}
so that $\delta(Q^-(t))\equiv 0$.  However, this contradicts that $\|\nabla Q^-(t)\|_{L^2}<\|\nabla Q\|_{L^2}$.

Next, since $Q^+$ is radial, Proposition~\ref{P:global-unconstrained_finite_variance} implies $|x|Q^{+}\in L^2$; moreover,
\begin{equation}
     2\Im\int Q^{+}_0 \, [x\cdot\nabla\overline{Q^+_0}]  >0.
\end{equation}
If $Q^{+}$ does not blow up in finite negative time, then applying the same Proposition~\ref{P:global-unconstrained} to the time-reversed solution $\overline{Q^{+}}(t) = {Q^+}(-t)$, we derive the contradiction 
\begin{equation}
    2\Im\int \overline{Q^{+}_0} \, [x\cdot\nabla Q^+_0]  >0.
\end{equation}
\end{proof}

Finally, we have the classification result. 
\begin{proof}[Proof of Theorem~\ref{T:Classification_threshold_solutions}] Let $u_0$ be such that $M(u_0)^{1-s_c}E(u_0)^{s_c} = M(Q)^{1-s_c}E(Q)^{s_c}
$. Using the scaling symmetry, we may assume $M(u_0)=M(Q)$ and $E(u_0)=E(Q).$

If $\|\nabla u_0\|_{L^2}<\|\nabla Q\|_{L^2}$ and the corresponding solution $u$ does not scatter in both time directions, then Propositions~\ref{P:compactness} and \ref{P:constrained-converge}, Corollaries~\ref{C:exponential_classification_1} and \ref{C:exponential_classification_2}, and Theorem~\ref{T:Existence_special_solutions} imply that $u = Q^-$ (up to the symmetries of the equation). 

Similarly, if $\|\nabla u_0\|_{L^2}>\|\nabla Q\|_{L^2}$ and $u$ does not blow up in both time directions, then by Proposition~\ref{P:global-unconstrained}, Corollaries~\ref{C:exponential_classification_1} ,and \ref{C:exponential_classification_1} and Theorem~\ref{T:Existence_special_solutions}, we have $u=Q^+$ (up to the symmetries of the equation).\end{proof}







\bibliographystyle{abbrvnat}
\bibliography{biblio}

\newcommand{\Addresses}{{
  \bigskip
  \footnotesize

  L. Campos, \textsc{IMECC, State University of Campinas (UNICAMP), Campinas, SP, Brazil}\par\nopagebreak
  \textit{E-mail address:} \texttt{luccas@ime.unicamp.br}

  \medskip

  J. Murphy, \textsc{Department of Mathematics \& Statistics, Missouri University of Science \& Technology, Rolla, MO, USA}\par\nopagebreak
  \textit{E-mail address:} \texttt{jason.murphy@mst.edu}

}}
\setlength{\parskip}{0pt}
{
\Addresses}

\end{document}